\newif\ifPDF
\newtheorem{thm}{Theorem}[section]
\newtheorem{cor}[thm]{Corollary}
\newtheorem{lem}[thm]{Lemma}
\newtheorem{prop}[thm]{Proposition}
\theoremstyle{definition}
\newtheorem{defn}[thm]{Definition}
\theoremstyle{remark}
\newtheorem{rem}[thm]{Remark}
\newtheorem{example}[thm]{Example}
\numberwithin{equation}{section}
\newcommand{\norm}[1]{\left\Vert#1\right\Vert}
\newcommand{\abs}[1]{\left\vert#1\right\vert}
\newcommand{\Real}{\mathbb R}
\newcommand{\Int}{\mathbb Z}
\newcommand{\Comp}{\mathbb C}
\newcommand{\eps}{\varepsilon}
\newcommand{\Kzero}{\mathrm{K}_0}
\newcommand{\Kone}{\mathrm{K}_1}
\newcommand{\tr}{\mathrm{T}}
\newcommand{\aff}{\mathrm{Aff}}
\begin{document}

\title[A Generalized construction and the comparison radius function]{Remarks on Villadsen algebras, II: A generalized construction and the comparison radius function}

\author{George A. Elliott}
\address{Department of Mathematics, University of Toronto, Toronto, Ontario, Canada~\ M5S 2E4}
\email{elliott@math.toronto.edu}

\author{Zhuang Niu}
\address{Department of Mathematics and Statistics, University of Wyoming, Laramie, Wyoming 82071, USA.}
\email{zhuangniu@icloud.com}

%\thanks{}
%\keywords{}
\date{\today}
%\dedicatory{}
%\commby{}

%------------------------------------------abstract--------------------------------------

\begin{abstract}

The authors' recent classification of Jesper
Villadsen's remarkable generalization (based on a
self-reproducing seed space) of Glimm's infinite tensor
product (UHF) C*-algebras, by means of the Cuntz semigroup
(in the case of a fixed, well-behaved, seed space),
is extended to the analogous generalization of Bratteli's
approximately finite-dimensional (AF) C*-algebras. Some
progress is made in the direction of distinguishing between
algebras based on different seed spaces.

%Villadsen's construction is generalized for arbitrary Bratteli diagrams. Similar to the classification of the UHF case, it is shown that if the seed space is a finite dimensional K-contractible solid space, then these AF-Villadsen algebras are classified by the Cuntz semigroup. In fact, they are classified by their $\Kzero$-group together with comparison radius functions.

%The more general version of comparison radius functions are also considered for the UHF-Villadsen algebras. It turns out can be used to distinguish certain UHF-Villadsen algebras with different (contractible) seed spaces, but same $\Kzero$-group and radius of comparison.
\end{abstract}

\maketitle

\section{Introduction}

 In \cite{ELN-Vill}, a beginning was made on classifying
simple C*-algebras beyond what might now be called the classical
classifiable class (often just called ``classifiable") in which
K-theory and traces suffice (see, for instance, in the unital finite case under consideration, \cite{EGLN-DR}), by using new information
contained in the Cuntz semigroup. (In particular, what was
used in \cite{ELN-Vill} was the Toms radius of comparison.)

  In \cite{ELN-Vill}, what might be called Villadsen algebras of the
first kind (introduced in \cite{Vill-perf}, and quite different from the
algebras studied later by Villadsen in \cite{Vill-sr}), or UHF-Villadsen
algebras (as they reduce to Glimm's infinite tensor product
algebras when the seed space is a single point),
with a fixed well-behaved seed space (for instance a cube),
were classified. In the present paper, this is extended
to the analogous class of AF-Villadsen algebras.

Examples of this class of algebras were constructed
by Hirshberg and Phillips in \cite{HP-Villadsen}, to show that
the particular Cuntz semigroup information used in \cite{ELN-Vill} 
(the radius of comparison) was no longer sufficient.
In the present paper, we consider more detailed information,
replacing the radius of comparison, a single number (or
as suggested in \cite{HP-Villadsen}---see also \cite{AGP-EDS} and \cite{Asadi-Vasfi:2025aa}---, a single number for each projection),
by a function on the tracial simplex which we call the
comparison radius function (with supremum the Toms invariant---see Corollary \ref{rc-V} and Theorem \ref{rc-V-general}):
\begin{thm}[Theorem \ref{classification-AF}]
Let $X$ be a K-contractible solid space such that $ 0 < \mathrm{dim}(X) < \infty$, and let $A(X, G, \mathcal E)$ and $B(X, H, \mathcal F)$ be AF-Villadsen algebras with seed space $X$ with rapid dimension growth (see \eqref{rapid-growth-cond}), where $G$ and $H$ are Bratteli diagrams and $\mathcal E$ and $\mathcal F$ are point evaluation sets. Then $A \cong B$ if, and only if, $(\mathrm{Cu}(A), [1_A]) \cong (\mathrm{Cu}(B), [1_B])$. Indeed, $A \cong B$ if, and only if, $$((\Kzero(A), \Kzero^+(A), [1_A]_0), r_\infty^{(0)}{(A)}) \cong ((\Kzero(B), \Kzero^+(B), [1_B]_0), r_\infty^{(0)}(B)),$$
where $r_\infty^{(0)}{(A)}$ and $r_\infty^{(0)}{(B)}$ are the comparison radius functions of $A$ and $B$ respectively.
\end{thm}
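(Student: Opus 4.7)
The plan is to follow an Elliott-type two-sided approximate intertwining. One direction of the biconditional (isomorphism implies isomorphism of invariants) is immediate from functoriality, so all the work is in the reverse direction. To set up the intertwining, the classification will be reduced to two ingredients: (i) an existence theorem, asserting that any prescribed morphism of invariants between building blocks lifts to an honest $*$-homomorphism, and (ii) an approximate uniqueness theorem, asserting that any two $*$-homomorphisms inducing the same map of invariants are approximately unitarily equivalent.

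The passage between the two stated forms of the invariant should be established first. As in \cite{ELN-Vill}, I expect $\mathrm{Cu}(A)$ to decompose into a projection part (recorded by $(\Kzero(A),\Kzero^+(A),[1_A]_0)$) together with a soft part identifiable with a sub-cone of lower semicontinuous affine functions on the trace simplex, and the comparison radius function $r_\infty^{(0)}(A)$ should be exactly the obstruction cutting out which soft elements actually appear. Once this structural description is in place, an isomorphism of $\mathrm{Cu}$ with unit and an isomorphism of the pair $((\Kzero,\Kzero^+,[1]_0),r_\infty^{(0)})$ carry the same information, which reduces the theorem to proving the second form.

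For existence I would work stage by stage along the inductive limits $A=\varinjlim A_m$ and $B=\varinjlim B_n$. Each stage is of Villadsen type, a direct sum of matrix algebras over $C(X)$ specified by the Bratteli diagram ($G$ or $H$) together with the point-evaluation set ($\mathcal{E}$ or $\mathcal{F}$); the Bratteli-diagram part encodes the $\Kzero$ data, while the point-evaluation multiplicities and the number of $\mathrm{id}_X$-type summands encode the local contribution to the comparison radius function at that stage. Given a prescribed map on invariants, I would build connecting $*$-homomorphisms $A_m\to B_{n(m)}$ by matching $\Kzero$ data through an ordinary Bratteli telescoping argument, and then adjust the ratio of point-evaluations to $\mathrm{id}_X$-summands so that the induced value of $r_\infty^{(0)}$ converges to the correct function on the trace simplex. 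The rapid dimension growth hypothesis \eqref{rapid-growth-cond} is exactly what guarantees enough room in each stage to realize arbitrary prescribed behaviour of $r_\infty^{(0)}$ in the limit.

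The main obstacle will be approximate uniqueness. Here two homomorphisms $\phi,\psi\colon A_m\to B_n$ that agree on invariants must be shown to be approximately unitarily equivalent after passing sufficiently far along the system. The argument proceeds by spatial matching of the underlying multisets of point evaluations on products $X^k$: using that $X$ is K-contractible, solid, and of finite positive dimension, one homotopes the eigenvalue patterns of $\phi$ and $\psi$ into one another through paths of unitaries, absorbing any mismatch into the large $\mathrm{id}_X$-part provided by rapid dimension growth. Compared with the UHF-Villadsen case handled in \cite{ELN-Vill}, the extra difficulty is that the presence of many matrix summands at each stage forces one to track, summand by summand, how each block's contribution aggregates to $r_\infty^{(0)}$, and to perform the homotopy and perturbation uniformly across summands without disturbing the $\Kzero$ matching already achieved. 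Once this uniqueness is in place, standard Elliott intertwining combines existence and uniqueness into the desired isomorphism $A\cong B$.
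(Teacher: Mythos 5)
Your overall architecture (two\mbox{-}sided approximate intertwining, reduced to existence plus approximate uniqueness) matches the paper's, and your reduction of the $\mathrm{Cu}$ statement to the $(\Kzero, r_\infty^{(0)})$ statement is in the right spirit, though the paper does not (and need not) establish the structural decomposition of $\mathrm{Cu}(A)$ into a projection part and a soft part that you posit --- that computation is essentially open for Villadsen algebras. Instead the paper closes a circle of implications: Theorem \ref{comparison-property} shows only that $r_\infty^{(0)}$ is recoverable from $\mathrm{Cu}$ as the minimal continuous affine gap function, so $\mathrm{Cu}$-isomorphism implies $(\Kzero, r_\infty^{(0)})$-isomorphism, which implies $A \cong B$, which implies $\mathrm{Cu}$-isomorphism. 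You should adopt this route rather than relying on an unproven description of the Cuntz semigroup.

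The genuine gap is in your uniqueness step. You propose to ``homotope the eigenvalue patterns of $\phi$ and $\psi$ into one another \ldots absorbing any mismatch into the large $\mathrm{id}_X$-part provided by rapid dimension growth.'' This is exactly what cannot be done here: the dominant, coordinate-projection part of a Villadsen connecting map ($f \mapsto f\circ\pi_k$) is topologically rigid --- it carries the Chern-class obstructions that make these algebras non-classifiable by traces and K-theory in the first place --- and mismatches cannot be absorbed into it. The paper's mechanism is the opposite. Proposition \ref{pre-intertwining} constructs $\rho_i$ and $\eta_i$ so that $\phi_i = \mathrm{diag}\{P_{A,i}, R'_{A,i}, \Theta_{A,i}\}$ and $\eta_i\circ\rho_i = \mathrm{diag}\{P_{A,i}, R''_{A,i}, \Theta_{A,i}\}$ have \emph{literally identical} coordinate-projection parts $P_{A,i}$ and point-evaluation parts $\Theta_{A,i}$, differing only in remainders $R'_{A,i}, R''_{A,i}$ whose rank is small relative to $\mathrm{rank}(\Theta_{A,i}(1_{A_i}))$; the hypothesis $r_\infty^{(0)}(A) \cong r_\infty^{(0)}(B)$ enters precisely to make the dimension count \eqref{room-4-dim} work, i.e., to guarantee that coordinate projections of nearly full multiplicity can be defined in both directions. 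The stable uniqueness theorem (Theorem \ref{stable-uniq}) then absorbs the small remainders into the \emph{point-evaluation} part $\Theta_{A,i}$ --- which, though of small relative rank, has trace bounded below by a constant $\Delta_1 > 0$ thanks to the density of the evaluation points --- via the condition $nM < m$. Without the exact matching of the rigid part and the absorption into the point evaluations (rather than into the $\mathrm{id}_X$-part), the intertwining cannot be closed up.
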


The comparison radius function $r_\infty^{(0)}$ considered in the theorem above is a continuous affine function on $\mathrm{S}_u(\Kzero(A))$. Note that, the theorem above implies that the trace simplex is determined by the $\Kzero$-group.

A more general comparison radius function, denoted by $r_\infty$, which is an upper semicontinuous affine function on $\mathrm{T}(A)$, is also constructed in the  UHF-Villadsen algebra case:

\begin{thm}[Theorem \ref{low-env-gn}]
Let $A$ be a UHF-Villadsen algebra with seed space a (finite) simplicial complex. There is a upper semicontinuous positive valued affine function $r_\infty$ on $\mathrm{T}^+(A)$ with  the following properties:
\begin{enumerate}

\item If $h \in \mathrm{Aff}(\mathrm{T}^+(A))$ (continuous affine functions, $0$ at $0$) and $r_\infty \leq h$, then, $h$ has the property that for any $a, b \in (A\otimes \mathcal K)^+$,
$$\mathrm{d}_\tau(a) + h(\tau) < \mathrm{d}_\tau(b),\ \tau \in \mathrm{T}^+(A) \quad  \Rightarrow \quad  a \precsim b. $$

\item If $h \in \mathrm{Aff}(\mathrm{T}^+(A))$ and $h(\tau_0) < r_\infty(\tau_0)$ for some $\tau_0 \in \mathrm{T}^+(A)$, then, there are $a, b \in (A\otimes \mathcal K)^+$ such that $$\mathrm{d}_\tau(a) + h(\tau) < \mathrm{d}_\tau(b),\quad \tau \in  \mathrm{T}^+(A),$$ but $a$ is not Cuntz-subequivalent to $b$.

%\item The function $r_\infty$ is upper semicontinuous. % i.e., for any $\eps>0$ and any $\tau_0 \in \tr^+(A)$, there is an open neighbourhood $U$ of $\tau$ such that $$r_\infty(\tau) < r_\infty(\tau_0) + \eps,\quad \tau \in U.$$

\end{enumerate}
%So, the upper semicontinuous function $r_\infty$ is the lower enveloping function of the set $G_A$ of continuous gap functions.
\end{thm}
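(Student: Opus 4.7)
I would construct $r_\infty$ directly from the Villadsen inductive data as a pointwise limit of finite-stage local comparison radii. Specifically, realize $A = \varinjlim(A_n, \phi_{n,n+1})$ as a Villadsen first-kind limit with building blocks $A_n = \MC{k_n}{X^{l_n}}$ and diagonal connecting maps built from identity and point-evaluation blocks. For $\tau \in \mathrm{T}^+(A)$, the restriction $\tau|_{A_n}$ corresponds to a positive Borel measure on $X^{l_n}$. Define continuous affine functions $\rho_n \colon \mathrm{T}^+(A) \to [0,\infty)$ by the ``local'' radius of comparison of the building block $A_n$ at $\tau|_{A_n}$; up to combinatorial corrections from the connecting map, this is essentially $\frac{\dim(X)\,l_n}{2 k_n}\tau(1_{A_n})$, the classical Toms bound. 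Using the Villadsen data one verifies that $(\rho_n(\tau))$ is eventually decreasing in $n$ for each fixed $\tau$, so I set
$$r_\infty(\tau) \;:=\; \inf_n \rho_n(\tau) \;=\; \lim_n \rho_n(\tau).$$
Being an infimum of continuous affine functions, $r_\infty$ is affine and upper semicontinuous, and positivity is automatic. The two properties are then established respectively by (i) a building-block cutdown argument and (ii) explicit Villadsen-type Chern-class counterexamples.

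\textbf{Property (1).} Given $h \in \mathrm{Aff}(\mathrm{T}^+(A))$ with $h \geq r_\infty$ and $a,b \in (A \otimes \mathcal K)^+$ with $\mathrm{d}_\tau(a) + h(\tau) < \mathrm{d}_\tau(b)$ at every $\tau$, compactness of a standard normalized slice of $\mathrm{T}^+(A)$ yields a uniform gap $\eta > 0$ with $\mathrm{d}_\tau(a) + h(\tau) + \eta < \mathrm{d}_\tau(b)$. The continuous function $h + \eta/2$ then strictly dominates $r_\infty$ on the compact slice, so by a Dini-type argument applied to the decreasing family $(\rho_n)$, there is $n_0$ with $h + \eta/2 \geq \rho_{n_0}$ pointwise. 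Approximate $a,b$ in Cuntz distance by $\tilde a, \tilde b \in (A_{n_0} \otimes \mathcal K)^+$; the strict comparison $\mathrm{d}_\tau(\tilde a) + \rho_{n_0}(\tau) < \mathrm{d}_\tau(\tilde b)$ then holds, and since $\rho_{n_0}$ is essentially the radius-of-comparison function of the finite-dimensional building block, the classical Toms-type comparison theorem for $\MC{k}{Y}$ with $Y$ a finite-dimensional compact metric space gives $\tilde a \precsim \tilde b$ in $A_{n_0}$, hence $a \precsim b$ in $A$.

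\textbf{Property (2) and main obstacle.} If $h(\tau_0) < r_\infty(\tau_0)$, pick $n$ with $\rho_n(\tau_0) > h(\tau_0) + 3\eps$. I construct $a, b \in (A_n \otimes \mathcal K)^+$ of Villadsen type: $b$ a trivial projection of specific rank and $a$ a direct sum of point-evaluation projections over $X^{l_n}$ whose associated vector bundle carries a non-vanishing top Chern (or Euler) class, in the style of \cite{Vill-perf}. Rank calibration makes $\mathrm{d}_\tau(a) + h(\tau) < \mathrm{d}_\tau(b)$ hold for all $\tau$ (using $\rho_n(\tau_0) - h(\tau_0) > 3\eps$ at $\tau_0$ and controlling other traces by the global rank balance), while the characteristic-class obstruction survives the Villadsen diagonal maps and prevents $a \precsim b$ in $A$. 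The hardest step, and the conceptual heart of the theorem, is the \emph{sharpness} claim: calibrating the counterexample in (2) to match the positive tolerance in (1) at precisely $r_\infty(\tau_0)$. This requires tracking the behaviour of Chern classes under the Villadsen diagonal embeddings and reconciling, trace-by-trace, the obstruction they produce with the building-block radius-of-comparison computation that underlies the definition of $\rho_n$.
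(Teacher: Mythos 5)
There is a genuine gap, and it lies in the very first step: the construction of $r_\infty$. You define the finite-stage functions $\rho_n$ by the \emph{global} Toms bound $\frac{\dim(X)\,l_n}{2k_n}\tau(1_{A_n})$, which is a constant multiple of $\tau(1_A)$ and hence constant on the normalized trace simplex; your $r_\infty$ is then just the radius of comparison $\mathrm{rc}(A)$ viewed as a constant function. The paper instead sets
$$r_s(x)=\frac{1}{2}\cdot\frac{\mathrm{loc.dim}(x)}{(n_1+k_1)\cdots(n_{s-1}+k_{s-1})},\qquad x\in X^{n_1\cdots n_{s-1}},$$
using the \emph{local} dimension function, which is upper semicontinuous but genuinely non-constant when $X$ is, say, $[0,1]\vee[0,1]^2$; the whole point of the theorem (see Corollary \ref{different-seed}) is that the resulting $r_\infty$ takes different values at different extreme traces. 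For your constant candidate, property (2) is actually false: when $X=[0,1]\vee[0,1]^2$ there exist continuous affine $h$ with $h\geq r_\infty^{\mathrm{paper}}$ everywhere (hence $h\in G_A$ by property (1) for the correct function) but $h(\tau_0)<\mathrm{rc}(A)=r_\infty^{\mathrm{student}}(\tau_0)$ at a trace $\tau_0$ concentrated on the low-dimensional arm, so your (2) would wrongly declare $h\notin G_A$. Symmetrically, your proof of (1) only establishes the statement for $h\geq\mathrm{rc}(A)$, not for all $h\geq r_\infty$: with the correct $r_\infty$ one only gets the pointwise bound $\mathrm{rank}(\tilde a(x))+\frac12\mathrm{loc.dim}(x)<\mathrm{rank}(\tilde b(x))$, and the paper must decompose $A_s$ as a recursive subhomogeneous algebra along the level sets of $\mathrm{loc.dim}$ before Toms' comparison theorem can be applied piecewise; the blanket application to $\mathrm{M}_{k}(\mathrm{C}(Y))$ with $\dim Y=\dim(X^{l_n})$ does not suffice.

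Two further points. First, in (2) you have the roles of $a$ and $b$ reversed: the characteristic-class obstruction shows that a \emph{trivial} projection $q$ fails to be Cuntz-subequivalent to a \emph{twisted} positive element $p$ (any trivial subbundle of $p$ is forced to have small rank by the nonvanishing Chern class), whereas a twisted bundle generically \emph{does} embed into a trivial projection of larger rank; moreover point evaluations produce constant, hence trivial, projections, so "point-evaluation projections whose bundle carries a non-vanishing Chern class" cannot be realized. Second, and inseparably from the first issue, the ball $B_s$ carrying the obstruction bundle must be chosen near a point $x_0$ where $h_s(x_0)<r_s(x_0)-\frac34\delta$, and its dimension must be $\mathrm{loc.dim}(x_0)$ (not $\dim(X^{l_n})$): this is precisely how the sharpness at $r_\infty(\tau_0)$ is achieved, and it is unavailable if $r_\infty$ has been defined via the global dimension.
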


The general comparison radius function $r_\infty$ in fact can be used to join together the classification theorems of \cite{ELN-Vill} for certain different seed spaces (and, by Corollary 7.9 of \cite{ELN-Vill}, their finite Cartesian powers):
\begin{thm}[Corollary \ref{joint-classification}]
Let $A = A(X^{(A)}, (n^{(A)}_i), (k^{(A)}_i))$ and $B = B(X^{(B)}, (n^{(B)}_i), (k^{(B)}_i))$  be UHF-Villadsen algebras with seed spaces $[0, 1]^2$ or $ [0, 1] \vee [0, 1]^2$. Then $A \cong B$ if, and only if, $(\mathrm{Cu}(A), [1_A]) \cong (\mathrm{Cu}(B), [1_B])$. Indeed, $A \cong B$ if, and only if, $$((\Kzero(A), \Kzero^+(A), [1_A]_0), r_\infty{(A)}) \cong ((\Kzero(B), \Kzero^+(B), [1_B]_0), r_\infty(B)).$$
\end{thm}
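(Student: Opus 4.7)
The ``only if'' direction is immediate, since both $(\mathrm{Cu}(A),[1_A])$ and $((\Kzero(A),\Kzero^+(A),[1_A]_0),r_\infty(A))$ are functorial invariants. For the ``if'' direction, my plan is to use the comparison radius function $r_\infty$ as a bridge that combines the classifications of \cite{ELN-Vill} across the two allowed seed spaces. Both $[0,1]^2$ and $[0,1]\vee[0,1]^2$ have covering dimension $2$, so the Toms radius of comparison alone (the single real number $\sup r_\infty$) cannot tell them apart; the finer affine structure of $r_\infty$ on $\mathrm{T}^+(A)$ has to do the distinguishing.

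The plan proceeds in three steps. First, I would compute $r_\infty$ explicitly in terms of the point-evaluation data $((n_i),(k_i))$ for each seed space, using the lower-envelope description of $r_\infty$ supplied by Theorem~\ref{low-env-gn}. The key observation is that the $1$-dimensional arm of the wedge $[0,1]\vee[0,1]^2$ contributes $0$ to $r_\infty$ (a UHF-Villadsen algebra with $1$-dimensional seed has $r_\infty=0$), whereas the $2$-dimensional arm contributes at the full dimensional rate, so the asymptotic weights of the point-evaluation data on each arm are encoded in $r_\infty$. Second, using the rapid dimension growth hypothesis, I would argue that any UHF-Villadsen algebra with seed $[0,1]\vee[0,1]^2$ admits, after telescoping, a presentation as a UHF-Villadsen algebra with seed $[0,1]^2$ and modified data $((n_i'),(k_i'))$ that preserves both $\Kzero$ and $r_\infty$; heuristically, the $1$-dimensional arm is asymptotically absorbed into additional multiplicity along the $2$-dimensional arm. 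Third, once both $A$ and $B$ are re-presented over the common seed $[0,1]^2$, apply the fixed-seed classification theorem of \cite{ELN-Vill}, together with its Cartesian-power extension (Corollary~7.9 there), to conclude that $A\cong B$.

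The main obstacle is the second step: the absorption/telescoping argument must preserve the comparison radius function pointwise on $\mathrm{T}^+(A)$ rather than merely at the supremum, and it must also respect the ordered $\Kzero$-group with its distinguished order-unit $[1_A]_0$. This is where the rapid dimension growth hypothesis enters essentially, ensuring that the $1$-dimensional contributions to each stage of the Bratteli diagram are asymptotically negligible and that the modified data $((n_i'),(k_i'))$ can be chosen to match the K-theoretic invariant exactly. If for some diagrams this cross-seed absorption were to fail, the alternative would be to use the explicit formulas from Step~1 to show that the invariants cannot have agreed in the first place, so that the theorem reduces to the (already established) fixed-seed case.
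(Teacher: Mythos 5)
Your primary route (Step~2, the ``cross-seed absorption'') cannot work, and the reason is precisely the content of Corollary~\ref{different-seed} in the paper: for the same growth data, the algebra with seed $[0,1]^2$ has $r_\infty$ \emph{constant} on the trace simplex (Remark~\ref{cont-r-func}, since $\mathrm{loc.dim}$ is constant on the square), whereas the algebra with seed $[0,1]\vee[0,1]^2$ has $r_\infty$ \emph{non-constant} --- it takes the value $\gamma=\tfrac12\prod_i n_i/(n_i+k_i)$ at traces concentrated on the $1$-dimensional arm and $2\gamma$ at traces concentrated on the $2$-dimensional arm. Consequently no telescoping can re-present a wedge-seeded algebra as a square-seeded one while preserving $r_\infty$ pointwise on $\mathrm{T}^+(A)$: any square-seeded presentation forces $r_\infty$ to be constant. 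Your ``key observation'' in Step~1 is also factually wrong: the $1$-dimensional arm does not contribute $0$ to $r_\infty$ (a UHF-Villadsen algebra with $1$-dimensional seed does \emph{not} have $r_\infty=0$ under the rapid growth condition \eqref{rdg-cond}; the relevant quantity $\tfrac12\,\mathrm{loc.dim}(x)/\prod(n_i+k_i)$ stays bounded below by $\gamma>0$). The distinguishing feature is constancy versus non-constancy of $r_\infty|_{\mathrm{T}(A)}$, not vanishing versus non-vanishing.

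Your fallback sentence is, in fact, the paper's entire proof: since $r_\infty$ is determined by $\mathrm{Cu}$ (Theorem~\ref{low-env-gn} and Remark~\ref{uniq-crf}), an isomorphism of invariants forces $r_\infty(A)$ and $r_\infty(B)$ to be simultaneously constant or simultaneously non-constant on the trace simplices, which by Corollary~\ref{different-seed} forces $X^{(A)}\cong X^{(B)}$; one then notes $\mathrm{rc}(A)=\mathrm{rc}(B)$ (Theorem~\ref{rc-V-general}) and applies the fixed-seed classification, Theorem~7.1 of \cite{ELN-Vill}. So the correct argument is the one you relegated to a contingency; to repair the proposal, discard Step~2 entirely, replace the Step~1 computation by the $\gamma$ versus $2\gamma$ dichotomy, and promote the fallback to the main line.
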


The general comparison radius function $r_\infty$  also can be used to show that the action of $\mathrm{Aut}(A)$ on the extreme points of $\tr(A)$, which is the Poulsen simplex (see \cite{ELN-Vill}),  is not transitive for certain UHF-Villadsen algebras $A$ (Corollary \ref{transitivity}).

The question of what the structure of the Cuntz semigroup for Villadsen algebras actually is is clearly of considerable interest. Since the algebras are of stable rank one (as proved by Villadsen), the recent result of Thiel, and Antoine, Perera, Robert, and Thiel, in \cite{Thiel-sr1} and \cite{APRT-sr1}, that the rank map splits, is very much pertinent.

\subsection*{Acknowledgements} The research of the first named author was supported by a Natural Sciences and Engineering Research Council of Canada (NSERC) Discovery Grant, and the research of the second named author was supported by a Simons Foundation grant (MP-TSM-00002606).

\section{AF-Villadsen algebras and the function $r_\infty^{(0)}$}\label{AF-construction}

\subsection{Growth along a Bratteli diagram}

Fix a metrizable compact space $X$ as the seed space. Consider an inductive sequence $(G_n, \phi_n)$, where $G_n = \Int^{s_n}$ with order unit $u_n = (u_{n, 1}, ..., u_{n, s_n})$. Consider the following inductive sequence of C*-algebras.

Set $$ \bigoplus_{j=1}^{s_n} \mathrm{M}_{u_{n, j}}(\mathrm{C}(X^{u_{n, j}})) = A_n.$$ For each $1\leq j \leq s_{n+1}$, note that the map $\phi_{n}: G_n \to G_{n+1}$ is induced by a multiplicity matrix $(m^{(n)}_{i, j})$, $1\leq i \leq s_n$, $1\leq j\leq s_{n+1}$. 
Then, for each $1 \leq j\leq s_{n+1}$, choose a partition
$$P_1 \sqcup \cdots \sqcup P_{s_n} = \{1, 2, ..., u_{n+1, j}\}$$
such that $$\abs{P_i} = m^{(n)}_{i, j}u_{n, i}.$$ Inside each $P_i$, choose another partition 
$$P_i = P_{i, 1} \sqcup \cdots \sqcup P_{i, m^{(n)}_{i, j}} $$
such that
$$\abs{P_{i, 1}} = \cdots = \abs{P_{i, m^{(n)}_{i, j}}}.$$
For each $k \in \{1, ..., m_{i, j}^{(n)}\}$, define $\pi_k$ to be the projection of $X^{u_{i+1, j}}$ onto the coordinate subset $P_{i, k}$. In this case, we also write $\pi_k \in P_i$.
%Note that, there is a permutation $\sigma$ of $\{1, 2, ..., u_{n+1, j} \}$ such that
%$$P_{i} = \{\sigma(\sum_{i'<i} m^{(n)}_{i', j}u_{n, i'}+1), ..., \sigma(\sum_{i'\leq i} m^{(n)}_{i', j}u_{n, i'})\}.$$
Then, define a map $A_n \to A_{n+1}$:
$$\phi_{n}: A_n \ni  (f_1, ..., f_{s_n}) \mapsto \bigoplus_{j=1}^{s_{n+1}} \bigoplus_{i=1}^{s_n} \bigoplus_{\pi_k\in P_i} f_i\circ \pi_k  \in A_{n+1}. $$
Let us call $P_i$ the supporting coordinates of $f_i$, and define the shape of $(\varphi_n)_j$ as $$((u_{n, 1}, m_{1, j}^{(n)}), ..., (u_{n, s_n}, m_{s_n, j}^{(n)})).$$

%Write 
%$$X^{u_{n+1, j}} = \underbrace{X^{u_{n, 1}} \times \cdots \times X^{u_{n, 1}}}_{m^{(n)}_{1, j}} \times \cdots \times \underbrace{X^{u_{n, s_n}} \times \cdots \times X^{u_{n, s_n}}}_{m^{(n)}_{s_n, j}},$$ and   define 
%$$\varphi_{n}: A_n \ni  (f_1, ..., f_{s_n}) \mapsto \bigoplus_{j=1}^{s_{n+1}} \mathrm{diag}\{ \underbrace{f_1 \circ \pi_1 ,..., f_1\circ \pi_{m^{(n)}_{1, j}}}_{m^{(n)}_{1, j}} ,...,  \underbrace{f_{s_n} \circ \pi_1 ,..., f_{s_n}\circ \pi_{m^{(n)}_{s_n, j}}}_{m^{(n)}_{s_n, j}} \} \in A_{n+1}. $$
%
%$$\varphi_{n}: A_n \ni  (f_1, ..., f_{s_n}) \mapsto \bigoplus_{j=1}^{s_{n+1}} \mathrm{diag}\{ \underbrace{f_1 \circ \pi^{(1)}_1 ,..., f_1\circ \pi^{(1)}_{m^{(n)}_{1, j}}}_{m^{(n)}_{1, j}} ,...,  \underbrace{f_{s_n} \circ \pi^{(s_n)}_1 ,..., f_{s_n}\circ \pi^{(s_n)}_{m^{(n)}_{s_n, j}}}_{m^{(n)}_{s_n, j}} \} \in A_{n+1}. $$ 

Denote the (non-simple) limit algebra by $A(X, (G_n, \phi_n), \mathcal P)$, where $\mathcal P$ denotes the choice of partitions.

\begin{rem}
The reason to include the partition is that, unlike the UHF case, even if we define $\phi_{n}$ and $\phi_{n+1}$ so that their partitions are standard, the partition associated to the composition $\phi_{n+1}\circ\phi_n$ is not standard. Therefore, there is no canonical choice for the supporting coordinates of the functions $f_i$, $i=1, ..., s_n$. However, we shall now show that, up to isomorphism,  the limit algebra $A(X, (G_n, \phi_n), \mathcal P)$ is independent of the choice of supporting coordinates. Therefore, we can omit $\mathcal P$, and just denote the algebra by $A(X, (G_n, \phi_n))$.
\end{rem}

\begin{prop}
In the setting above, one has $A(X, (G_n, \phi_n), \mathcal P) \cong B(X, (G_n, \phi_n), \mathcal Q)$ for any two partition systems $\mathcal P$ and $\mathcal Q$.
\end{prop}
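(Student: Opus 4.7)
The plan is to build the isomorphism by an exact one-sided intertwining of inductive sequences: I will produce automorphisms $u_n \in \mathrm{Aut}(A_n)$, each a coordinate permutation on the direct summands $\mathrm{M}_{u_{n,i}}(\mathrm{C}(X^{u_{n,i}}))$, such that
\[
u_{n+1}\circ\phi_n^{\mathcal{P}} \;=\; \phi_n^{\mathcal{Q}}\circ u_n, \qquad n=1,2,\dots,
\]
so that the diagram passes to an isomorphism at the level of inductive limits. (Here $\phi_n^{\mathcal{P}}$ and $\phi_n^{\mathcal{Q}}$ denote the connecting maps built from the respective partition systems.)

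First I would compare $\phi_n^{\mathcal{P}}$ and $\phi_n^{\mathcal{Q}}$ at a single level. For each $j$, both partitions present $\{1,\dots,u_{n+1,j}\}$ as an ordered partition into blocks of sizes $u_{n,i}$ (with multiplicity $m_{i,j}^{(n)}$). Since the block sizes agree and the orderings are specified, there is a permutation $\rho_{n+1,j}$ of $\{1,\dots,u_{n+1,j}\}$ sending the $\ell$-th entry of the $(i,k)$-th $\mathcal{P}$-block to the $\ell$-th entry of the $(i,k)$-th $\mathcal{Q}$-block. This permutation induces an automorphism $\alpha_{n+1,j}$ of $\mathrm{M}_{u_{n+1,j}}(\mathrm{C}(X^{u_{n+1,j}}))$ by its action on the base, and a direct unpacking of the formula defining $\phi_n$ gives
\[
\phi_n^{\mathcal{Q}} \;=\; \alpha_{n+1}\circ\phi_n^{\mathcal{P}}, \qquad \alpha_{n+1} := \bigoplus_{j}\alpha_{n+1,j}.
\]

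The central step is a commutation lemma: for any coordinate-permutation automorphism $v_n$ of $A_n$ there is a coordinate-permutation automorphism $\tilde v_{n+1}$ of $A_{n+1}$ with $\phi_n^{\mathcal{P}}\circ v_n = \tilde v_{n+1}\circ \phi_n^{\mathcal{P}}$. If $v_n$ acts on the $i$-th summand through the permutation $\mu_i$ of the $u_{n,i}$ coordinates of $X^{u_{n,i}}$, I would define $\tilde v_{n+1}$ on the $j$-th summand by permuting coordinates of $X^{u_{n+1,j}}$ so that, on the ordered block $P_{i,k}^{\mathcal{P}}=(p_1,\dots,p_{u_{n,i}})$, the position $p_\ell$ is carried to $p_{\mu_i(\ell)}$. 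Because the blocks $\{P_{i,k}^{\mathcal{P}}\}_{i,k}$ are pairwise disjoint and exhaust $\{1,\dots,u_{n+1,j}\}$, this prescribes a well-defined global coordinate permutation, and the identity $\mu_i\circ\pi_k = \pi_k\circ \tilde v_{n+1}$ holds on each block by construction.

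With these two ingredients the induction runs as follows. Set $u_1 := \mathrm{id}_{A_1}$. Given a coordinate-permutation automorphism $u_n$, apply the commutation lemma to obtain $\tilde u_{n+1}$ with $\phi_n^{\mathcal{P}}\circ u_n = \tilde u_{n+1}\circ \phi_n^{\mathcal{P}}$, and set $u_{n+1} := \alpha_{n+1}\circ\tilde u_{n+1}$, still a coordinate permutation. Then
\[
u_{n+1}\circ\phi_n^{\mathcal{P}} \;=\; \alpha_{n+1}\circ\tilde u_{n+1}\circ\phi_n^{\mathcal{P}} \;=\; \alpha_{n+1}\circ\phi_n^{\mathcal{P}}\circ u_n \;=\; \phi_n^{\mathcal{Q}}\circ u_n,
\]
closing the induction and producing the desired isomorphism $\varinjlim(A_n,\phi_n^{\mathcal{P}})\cong\varinjlim(A_n,\phi_n^{\mathcal{Q}})$. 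The main obstacle is the coordinate bookkeeping in the commutation lemma---tracking both the decomposition of $\{1,\dots,u_{n+1,j}\}$ into blocks and the orderings within each block consistently enough that the global permutation $\tilde v_{n+1}$ is unambiguous---but this is purely combinatorial, and no essential difficulty beyond it is expected.
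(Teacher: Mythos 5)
Your proposal is correct and follows essentially the same route as the paper: an exact intertwining $u_{n+1}\circ\phi_n^{\mathcal P}=\phi_n^{\mathcal Q}\circ u_n$ built inductively from coordinate-permutation automorphisms of the building blocks $\mathrm{M}_{u_{n,i}}(\mathrm{C}(X^{u_{n,i}}))$. Your explicit factorization into the block-alignment automorphism $\alpha_{n+1}$ and the commutation step producing $\tilde u_{n+1}$ is just a more detailed organization of the paper's observation that $\phi_n^{(A)}$ and $\phi_n^{(B)}\circ\sigma_n$ have the same shape and hence differ by a coordinate permutation of $X^{u_{n+1,j}}$.
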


\begin{proof}
It is enough to construct isomorphisms $\sigma_n: A_n \to B_n$, $n=1, 2,$..., such that 
\begin{equation}\label{comm-pert-cord}
\phi_n^{(B)} \circ \sigma_n = \sigma_{n+1} \circ \phi_n^{(A)},\quad n=1, 2, ... .
\end{equation}

Set $\sigma_1 = \mathrm{id}$. Assume that $\sigma_n$ is defined, and that it is induced by homeomorphisms $X^{u_{n, i}} \to X^{u_{n, i}}$, $i=1, ..., s_n$, by the coordinate permutations:
$$(x_1, ..., x_{u_{n, i}}) \mapsto (x_{\sigma_i(1)}, ..., x_{\sigma_i(u_{n, i})}),\quad i=1, ..., s_n. $$

A calculation shows that, up to a permutation,  for each $j=1, ..., s_{n+1}$,
\begin{eqnarray*}
&& (\phi_n^{(A)}(f_1, ..., f_{s_n}))_j(x_1, ..., x_{u_{n+1, j}}) \\
& = &  \mathrm{diag}\{\underbrace{f_1(x_1, ..., x_{u_{n, 1}}), ...,f_1(x_{(m_{1, 1}-1)u_{n, 1}+1}, ..., x_{m_{1, j} u_{n, s_n}})}_{m_{1, j}}, ..., \\
& & \quad\quad \underbrace{f_{s_n}(x_{u_{n+1, j}-m_{s_n, j}u_{n, s_n}+1}, ..., x_{u_{n+1, j}-(m_{s_n, j})u_{n, s_n}+u_{n, s_n}}), ...,f_{s_n}(x_{u_{n+1, j}-u_{n, s_n}+1}, ..., x_{u_{n+1, j}})}_{m_{s_n, j}}   \}.
\end{eqnarray*}
(As defined above, $(\phi_n^{(A)})_j$ has the shape $((u_{n, 1}, m^{(n)}_{1, j}), ..., (u_{n, s_n}, m^{(n)}_{s_n, j}))$.)

On the other hand, 
\begin{eqnarray*}
&& (\phi_{n}^{(B)}(\sigma_n(f_1(y_1, ..., y_{u_{n, 1}}), ..., f_{s_n}(y_1, ..., y_{u_{n, s_n}}))))_j(x_1, ..., x_{u_{n+1}, j}) \\
& = & (\phi_{n}^{(B)}(f_1(y_{\sigma_1(1)}, ..., y_{\sigma_1(u_{n, 1})}), ..., f_{s_n}(y_{\sigma_{s_n}(1)}, ..., y_{\sigma_{s_n}(u_{n, s_n})})))_j (x_1, ..., x_{u_{n+1}, j})  \\
& = &  \mathrm{diag}\{\underbrace{f_1(\sigma_1(x_1, ..., x_{u_{n, 1}})), ...,f_1(\sigma_1(x_{(m_{1, 1}-1)u_{n, 1}+1}, ..., x_{m_{1, j} u_{n, s_n}}))}_{m_{1, j}}, ..., \\
& & \quad\quad \underbrace{f_{s_n}(\sigma_{s_n}(x_{u_{n+1, j}-m_{s_n, j}u_{n, s_n}+1}, ..., x_{u_{n+1, j}-(m_{s_n, j})u_{n, s_n}+u_{n, s_n}})), ...,f_{s_n}(\sigma_{s_n}(x_{u_{n+1, j}-u_{n, s_n}+1}, ..., x_{u_{n+1, j}}))}_{m_{s_n, j}}   \}.
\end{eqnarray*}
So, $(\phi_{n}^{(B)})_j$ has the same shape $((u_{n, 1}, m^{(n)}_{1, j}), ..., (u_{n, s_n}, m^{(n)}_{s_n, j}))$.
Therefore, there are permutation homeomorphisms $X^{u_{n+1}, j} \to X^{u_{n+1}, j}$, $j=1, ..., s_{n+1}$, which induce an isomorphism $\sigma_{n+1}: A_{n+1} \to A_{n+1}$ satisfying \eqref{comm-pert-cord}, as desired.
\end{proof}

%\begin{prop}\label{iso-non-simple}
%With notation as above, $$\varinjlim (G^{(1)}_n, \phi^{(1)}_n) \cong \varinjlim (G^{(2)}_n, \phi^{(2)}_n) \quad \Longrightarrow \quad  A(X, (G^{(1)}_n, \phi^{(1)}_n))) \cong A(X, (G^{(2)}_n, \phi^{(2)}_n))).$$
%\end{prop}
%
%\begin{proof}
%Intertwining argument.
%\end{proof}

\subsection{Adding point evaluations and the function $r_\infty^{(0)}$}

Let us assume $X$ is K-contractible (i.e., $\mathrm{K}_*(\mathrm{C}(X)) \cong \mathrm{K}_*(\Comp)$, $*=0, 1$) and 
$$0 < \mathrm{dim}(X) < \infty$$
in the rest of the paper.

Let a finite set $E^{(n)}_{i, j} \subseteq X^{u_{n, i}}$ be given for each $n$, each $1\leq i \leq s_{n}$, and each $1\leq j \leq s_{n+1}$. Set 
$$ \bigoplus_{i=1}^{s_n} \mathrm{M}_{\tilde{u}_{n, i}}(\mathrm{C}(X^{u_{n, i}})) = A_n, $$
where $\tilde{u}_{n, i}$ is defined recursively by
$$\tilde{u}_{n, i} = \sum_{i'=1}^{s_{n-1}} (m^{(n-1)}_{i', i } + \abs{E^{(n-1)}_{i', i}}) \tilde{u}_{n-1, i'},\quad 1\leq i \leq s_n,$$ for $n> 1$,   and $$\tilde{u}_{1, i} = u_{1, i},\quad i=1, ..., s_1.$$

Define a map $\varphi_n: A_n \to A_{n+1}$ by
$$(f_1, ..., f_{s_n}) \mapsto \bigoplus_{j=1}^{s_{n+1}} \mathrm{diag}\{ \underbrace{f_1 \circ \pi^{(1)}_1 ,..., f_1\circ \pi^{(1)}_{m^{(n)}_{1, j}}}_{m^{(n)}_{1, j}}, f_1(E^{(n)}_{1, j}) ,...,  \underbrace{f_{s_n} \circ \pi^{(s_n)}_1 ,..., f_{s_n}\circ \pi^{(s_n)}_{m^{(n)}_{s_n, j}}}_{m^{(n)}_{s_n, j}}, f_{s_n}(E^{(n)}_{s_n, j})\}.$$

Define the $\frac{1}{2}$-dimension ratios at stage $n$ of the sequence $(A_i \to A_{i+1})$ to be
$$r_{n, j}:=\frac{\mathrm{dim}(X)}{2} \cdot \frac{u_{n, j}}{\tilde{u}_{n, j}} = \frac{\mathrm{dim}(X)}{2} \cdot  \frac{\sum_{i=1}^{s_{n-1}} m^{(n-1)}_{i, j } u_{n-1, i} }{\sum_{i=1}^{s_{n-1}} (m^{(n-1)}_{i, j } + \abs{E^{(n-1)}_{i, j}}) \tilde{u}_{n-1, i} }, \quad j=1, ..., s_n.$$ 
Note that $\Kzero(A_n) \cong \Int^{s_n}$. Then, denote by $$r^{(0)}_n:=(r_{n, 1}, ..., r_{n, s_n})$$ the corresponding  continuous affine function on $\mathrm{S}_u(\Kzero(A_n))$, and then regard $r_n^{(0)}$ as an element of $\mathrm{Aff}(\mathrm{S}_u(\Kzero(A)))$. (Note that $\aff(\mathrm{S}_u(\Kzero(A))) = \varinjlim\mathrm{Aff}(\mathrm{S}_u(\Kzero(A_n)))$ canonically.) % (note that $X$ is assume to be connected).

For each $i=1, 2, ...$ and $j > i$, denote the (coordinate) multiplicity matrices of the partial maps $\phi_{i, j}$ and $\varphi_{i, j}$ by $[\phi_{i, j}]$ and $[\varphi_{i, j}]$ respectively, and denote by $[E_{i, j}]$ the multiplicity matrix of the point evaluation maps between $A_i$ and $A_j$. Note that $$[\varphi_{i, j}] = [\phi_{i, j}] + [E_{i, j}], \quad i=1, 2, ...,\ i < j, $$
$$ u_i = [\phi_{1, i}](u_1) \quad \mathrm{and} \quad \tilde{u}_i = [\varphi_{1, i}](u_1),\quad i=2, 3, ... ,$$
and $$\tilde{u}_1 = u_1.$$ 
Then
\begin{eqnarray*}
r_i^{(0)} & = & \frac{\mathrm{dim}(X)}{2} (\frac{u_{i, j}}{\tilde{u}_{i, j}})_{1\leq j\leq s_i} =  \frac{\mathrm{dim}(X)}{2} \frac{[\phi_{1, i}](u_1)}{[\varphi_{1, i}](u_{1})} \\
& = & \frac{\mathrm{dim}(X)}{2} \frac{[\phi_{1, i}](u_1)}{([\phi_{1, i}] + [E_{1, i}])(u_{1})} \\
& = & \frac{\mathrm{dim}(X)}{2} \frac{[\phi_{i-1, i}] ( \cdots  [\phi_{1, 2}](u_1)\cdots)}{([\phi_{i-1, i}] + [E_{i-1, i}]) ( \cdots  ([\phi_{1, 2}] + [E_{1, 2}])(u_{1})\cdots) } ,
\end{eqnarray*}
where the ratio of two vectors means the vector of individual fractions. So, it is clear from the last expression that $r_i$, $i=1, 2, ...$, regarded as a sequence in $\aff(\mathrm{S}_u(\Kzero(A)))$, is decreasing.

Now, let us ensure that the point evaluation sets $E^{(n)}_{i, j}$, $i=1, ..., s_{n-1}$, $j=1, ..., s_n$, are sufficiently small that 
\begin{equation}\label{rapid-growth-cond}
\textrm{$(r_i^{(0)})$ converges uniformly to a strictly positive function $r^{(0)}_\infty \in \aff(\mathrm{S}_u(\Kzero(A)))$.}
\end{equation}
(Recall that $\aff(\mathrm{S}_u(\Kzero(A))) = \varinjlim\mathrm{Aff}(\mathrm{S}_u(\Kzero(A_n))).$)

\begin{rem}
Condition \eqref{rapid-growth-cond} is equivalent to the continuity and strict positivity of the function $r^{(0)}_\infty$.

If Condition \eqref{rapid-growth-cond} is satisfied, then the algebra $A$ is not $\mathcal Z$-absorbing. On the other hand, Condition \eqref{rapid-growth-cond} in general can fail in the way that only some of the dimension ratios $\frac{u_{i, j}}{\tilde{u}_{i, j}}$, $1\leq j \leq s_i$, converge to $0$ as $i\to\infty$ (hence $r_\infty^{(0)}(\tau) = 0$ for some $\tau \in \mathrm{S}_u(\Kzero(A))$) but the function $r_\infty^{(0)}(\tau)$ is not constant equal to zero. In this case, the algebra $A$ is still not $\mathcal Z$-absorbing.

But for technical reasons, let us assume the continuity and strict positivity of $r^{(0)}_\infty$ (Condition \eqref{rapid-growth-cond}) in this paper.
\end{rem}

\begin{rem}
Let $\Delta$ be a metrizable Choquet simplex, and let $\rho$ be a strictly positive continuous affine function on $\Delta$. Is there an AF-Villadsen algebra $A$ such that $(r_\infty^{(0)}, \mathrm{S}_u(\Kzero(A)) \cong (\rho, \Delta)$?
\end{rem}

Since the sequence $(r^{(0)}_i)$ converges uniformly, the function $r^{(0)}_\infty$ is continuous. By compactness of $\mathrm{S}_u(\Kzero(A))$, there is $\delta > 0$ such that $$r^{(0)}_i(\tau) \geq r^{(0)}_\infty(\tau) \geq \delta,\quad \tau \in \mathrm{S}_u(\Kzero(A)),\ i=1, 2, ... .$$ This translates to
\begin{equation}\label{rapid-growth-cond-1-1}
\frac{\mathrm{dim}(X)}{2} \frac{[\phi_{1, i}](u_1)}{([\phi_{1, i}] + [E_{1, i}])(u_{1})} \geq \delta, % \frac{[\phi_i] \circ \cdots \circ [\phi_{1}](u_1)}{([\phi_{i}] + [E_{i}]) \circ \cdots \circ ([\phi_{1}] + [E_{1}])(u_{1})} \geq \delta,
\quad i=1, 2, ..., 
\end{equation}
where $``\geq \delta"$ means each entry of the vector is larger than $\delta$.

Note that 
\begin{eqnarray*} 
r^{(0)}_{i} - r^{(0)}_{i+k} & = &  \frac{\mathrm{dim}(X)}{2} (\varphi_{i, i+k}^*(\frac{[\phi_{1, i}](u_1)}{([\phi_{1, i}] + [E_{1, i}])(u_{1})}) - \frac{[\phi_{1, i+k}](u_1)}{([\phi_{1, i+k}] + [E_{1, i+k}])(u_{1})})  \\
& = & \frac{\mathrm{dim}(X)}{2} (\frac{([\phi_{i, i+k}] + [E_{i, i+k}]) \circ [\phi_{1, i}](u_1)}{([\phi_{i, i+k}] + [E_{i, i+k}])\circ([\phi_{1, i}] + [E_{1, i}])(u_{1})} \\
&& - \frac{[\phi_{i, i+k}] \circ [\phi_{1, i}](u_1)}{([\phi_{i, i+k}] + [E_{i, i+k}])\circ([\phi_{1, i}] + [E_{1, i}])(u_{1})}) \\
& = & \frac{\mathrm{dim}(X)}{2} \frac{[E_{i, i+k}] \circ [\phi_{1, i}](u_1)}{([\phi_{i, i+k}] + [E_{i, i+k}])\circ([\phi_{1, i}] + [E_{1, i}])(u_{1})} \in \aff(\mathrm{S}_u(\Kzero(A_{i+k}))).
\end{eqnarray*}
By the Cauchy criterion, the uniform convergence of $(r^{(0)}_i)$ translates to the condition
\begin{equation}\label{rapid-growth-cond-1-2}
\lim_{i \to\infty}\sup_{k}\norm{ \frac{[E_{i, i+k}] \circ [\phi_{1, i}](u_1)}{([\phi_{i, i+k}] + [E_{i, i+k}])\circ([\phi_{1, i}] + [E_{1, i}])(u_{1})} }_\infty = 0.
\end{equation}

%
%\begin{equation}\label{rapid-growth-cond}
%\norm{\tilde{r}_n - \tilde{r}_{n-1}}_\infty < \frac{1}{2^n}\cdot \frac{\mathrm{dim}(X)}{2},\quad n=1, 2, ...,
%\end{equation}
%and hence the sequence $\tilde{r}_n$, $n=1, 2, ...,$ converges in $(\aff(\mathrm{S}_u(\Kzero(A))), \norm{\cdot}_\infty)$. 
%Let us also assume that $\abs{E_{i, j}^{n}}$ satisfy the following condition:
%
%
%\begin{equation}
%\prod_{n=1}^\infty(\frac{\sum_{i=1}^{s_{n}}m_{i, j}^{(n)} }{\sum_{i=1}^{s_{n}}(m_{i, j}^{(n-1)} + \abs{E_{i, j}^{(n)}})}) > 0
%\end{equation}
%
%For any $\delta>0$, there is $N>0$ such that
%%\begin{equation}
%%(1 - \delta) [\varphi_{i, j}](\mathbf 1_{s_i}) < [\phi_{i, j}](\mathbf 1_{s_i}) <  [\varphi_{i, j}](\mathbf 1_{s_i}),\quad i>N,\  j> i,
%%\end{equation}
%\begin{equation}\label{rapid-growth-cond-pert-1}
%([\varphi_{i, j}] - [\phi_{i, j}])(\mathbf 1_s) < \delta [\varphi_{i, j}](\mathbf 1_{s_i})
%\end{equation}
%where $$\mathbf 1_{s_i} = (\underbrace{1, ..., 1}_{s_i}), $$ and $[\phi_{i, j}]$ and $[\varphi_{i, j}]$ are the multiplicity matrices of $\phi_{i, j}$ and $\varphi_{i, j}$ respectively. (This condition is technically required for the intertwining argument of Proposition \ref{} later, and this condition is automatically satisfied in the UHF case for non-$\mathcal Z$-absorbing Villadsen algebras.---This condition probably is not needed!)

\begin{lem}
With the condition \ref{rapid-growth-cond}, one has 
\begin{equation}\label{rapid-growth-cond-1-3}
\lim_{i \to\infty}\sup_{k}\norm{ \frac{[E_{i, i+k}] \circ ([\phi_{1, i}] + [E_{1, i}])(u_1)}{([\phi_{i, i+k}] + [E_{i, i+k}])\circ([\phi_{1, i}] + [E_{1, i}])(u_{1})} }_\infty = 0,
\end{equation}
which, by definition, may be written as  
\begin{equation}\label{rapid-growth-cond-2}
\lim_{i\to\infty}\sup_{k} \max\{ \frac{([E_{i, i+k}](\tilde{u}_{i}))_j }{([\varphi_{i, i+k}](\tilde{u}_{i}))_j}: j=1, ..., s_{i+k} \} = 0.
\end{equation}
\end{lem}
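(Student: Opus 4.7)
The plan is to deduce \eqref{rapid-growth-cond-1-3} from the hypothesis \eqref{rapid-growth-cond-1-2} by exploiting the fact that both expressions have the same denominator $[\varphi_{1,i+k}](u_1) = \tilde u_{i+k}$, so only the numerators need to be compared. Since $\tilde u_i = [\phi_{1,i}](u_1) + [E_{1,i}](u_1)$, applying the nonnegative matrix $[E_{i,i+k}]$ splits the numerator appearing in \eqref{rapid-growth-cond-1-3} as
\[
[E_{i,i+k}](\tilde u_i) \;=\; [E_{i,i+k}] \circ [\phi_{1,i}](u_1) \;+\; [E_{i,i+k}] \circ [E_{1,i}](u_1).
\]
The first summand is precisely what appears in \eqref{rapid-growth-cond-1-2}, so my task reduces to bounding the second summand (the ``point evaluation composed with point evaluation'' contribution), after division by $\tilde u_{i+k}$, by a constant multiple of the first.

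The key input is the uniform lower bound \eqref{rapid-growth-cond-1-1}: rewriting $r^{(0)}_i = \frac{\mathrm{dim}(X)}{2}\cdot u_i/\tilde u_i \geq \delta$ coordinate-by-coordinate yields the entrywise vector inequality
\[
\tilde u_i \;\leq\; C\, u_i, \qquad C \;:=\; \frac{\mathrm{dim}(X)}{2\delta},
\]
valid for every $i$. Because the coordinate multiplicity matrix $[E_{i,i+k}]$ has nonnegative integer entries, it preserves entrywise inequalities, so $[E_{i,i+k}](\tilde u_i) \leq C\,[E_{i,i+k}] \circ [\phi_{1,i}](u_1)$. Dividing by $\tilde u_{i+k}$ coordinate-wise and taking $\sup_{k}\norm{\cdot}_\infty$ then gives
\[
\sup_{k} \norm{\frac{[E_{i,i+k}](\tilde u_i)}{\tilde u_{i+k}}}_\infty \;\leq\; C \sup_{k} \norm{\frac{[E_{i,i+k}] \circ [\phi_{1,i}](u_1)}{\tilde u_{i+k}}}_\infty,
\]
and the right-hand side tends to $0$ as $i\to\infty$ by \eqref{rapid-growth-cond-1-2}. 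The left-hand side is exactly the quantity in \eqref{rapid-growth-cond-2}, which by definition is \eqref{rapid-growth-cond-1-3}.

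I do not expect any serious obstacle here; the argument amounts to the elementary observation that \eqref{rapid-growth-cond-1-1} is the same as an entrywise upper bound $\tilde u_i \leq C\, u_i$, which upgrades \eqref{rapid-growth-cond-1-2} to \eqref{rapid-growth-cond-1-3} at the cost of the harmless constant $C$. The only thing that needs attention is that ``$\leq$'' is to be interpreted as an entrywise relation on vectors and that nonnegativity of the entries of $[E_{i,i+k}]$ is what preserves it under application; both points are routine.
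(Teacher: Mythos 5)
Your proposal is correct and is essentially identical to the paper's own argument: both rewrite \eqref{rapid-growth-cond-1-1} as the entrywise bound $([\phi_{1,i}]+[E_{1,i}])(u_1) \leq \frac{\mathrm{dim}(X)}{2\delta}[\phi_{1,i}](u_1)$, push it through the nonnegative matrix $[E_{i,i+k}]$ over the common denominator, and invoke \eqref{rapid-growth-cond-1-2}. The preliminary splitting of the numerator you mention is not actually needed (and the paper does not use it), since the direct bound by the constant $\frac{\mathrm{dim}(X)}{2\delta}$ already does the job.
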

\begin{proof}
By \eqref{rapid-growth-cond-1-1}, one has
$$ ([\phi_{1, i}] + [E_{1, i}])(u_1) \leq \frac{\mathrm{dim}(X)}{2} \frac{1}{\delta}[\phi_{1, i}](u_1),\quad i=1, 2, ...,$$
and therefore, for each $i, k=1, 2, ...$,
$$ 
\frac{[E_{i, i+k}] \circ ([\phi_{1, i}] + [E_{1, i}])(u_1)}{([\phi_{i, i+k}] + [E_{i, i+k}])\circ([\phi_{1, i}] + [E_{1, i}])(u_{1})} \leq \frac{\mathrm{dim}(X)}{2} \frac{1}{\delta} \frac{[E_{i, i+k}] \circ [\phi_{1, i}](u_1)}{([\phi_{i, i+k}] + [E_{i, i+k}])\circ([\phi_{1, i}] + [E_{1, i}])(u_{1})}.
$$
Thus, by \eqref{rapid-growth-cond-1-2},
\begin{eqnarray*}
 & & \lim_{i \to\infty}\sup_{k}\norm{ \frac{[E_{i, i+k}] \circ ([\phi_{1, i}] + [E_{1, i}])(u_1)}{([\phi_{i, i+k}] + [E_{i, i+k}])\circ([\phi_{1, i}] + [E_{1, i}])(u_{1})} }_\infty \\
 & \leq & \frac{\mathrm{dim}(X)}{2}  \frac{1}{\delta} \lim_{i \to\infty}\sup_{k}\norm{ \frac{[E_{i, i+k}] \circ [\phi_{1, i}](u_1)}{([\phi_{i, i+k}] + [E_{i, i+k}])\circ([\phi_{1, i}] + [E_{1, i}])(u_{1})} }_\infty \\
 &  = & 0.
 \end{eqnarray*}
which is \eqref{rapid-growth-cond-1-3}. 
\end{proof}

%By \eqref{rapid-growth-cond-1-1}, this implies the following condition:
%\begin{equation*}
%\lim_{i \to\infty}\sup_{k}\norm{ \frac{[E_{i, i+k}] \circ ([\phi_{1, i}] + [E_{i, i}])(u_1)}{([\phi_{i, i+k}] + [E_{i, i+k}])\circ([\phi_{1, i}] + [E_{1, i}])(u_{1})} }_\infty = 0,
%\end{equation*}
%which is equivalent to 
%\begin{equation}\label{rapid-growth-cond-2}
%\lim_{i\to\infty}\sup_{k} \max\{ \frac{([E_{i, i+k}]\tilde{u}_{i})_j }{([\varphi_{i, i+k}]\tilde{u}_{i})_j}: j=1, ..., s_{i+k} \} = 0,
%\end{equation}
%\begin{equation}\label{rapid-growth-cond-2}
%\lim_{i\to\infty}\sup_{k > n} \max\{ \frac{\sum_{i=1}^{s_n} \abs{E_{i, j}^{(n, k)}}\tilde{u}_{n, i} }{\sum_{i=1}^{s_n} (m^{(n, k)}_{i, j} + \abs{E^{(n, k)}_{i, j}})\tilde{u}_{n, i}  }: j=1, ..., s_k \} = 0,
%\end{equation}
%where $(m_{i, j}^{n, k})$ is the multiplicity matrix of $\phi_{n, k}$ and $E_{i, j}^{(n, k)}$, $i=1, ..., s_n$, $j=1, ..., s_k$,  are the evaluation points of the map $\varphi_{n, k}$.

Note that $$0 < r^{(0)}_\infty(\tau) < \frac{\mathrm{dim}(X)}{2},\quad \tau \in \mathrm{S}_u(\Kzero(A)).$$ 

%\begin{defn}
%Let $A$ be a unital C*-algebra, and let $p \in A \otimes \mathcal K$ be a projection. The radius of comparison scaled at $p$, denote by $\mathrm{rc}(A, p)$, is defined to be the infimums of
%$$\{r \in \Real : \mathrm{d}_\tau(a) + r \cdot \mathrm{d}_\tau(p) < \mathrm{d}_\tau(b) \ \Rightarrow \ a \precsim b\}.$$
%\end{defn}
%
%\begin{defn}
%Let $A$ be a C*-algebra with $\tr(A) \neq \O$. For each $\tau \in \tr(A)$, define $$r(\tau) = \sup\{r \in(0, \infty): \mathrm{d}_\tau(a) + r < \mathrm{d}_\tau(b)\ \Rightarrow\ a \precsim b \}.$$ Let us call $\tau \mapsto r(\tau)$ the comparison function.
%\end{defn}

\begin{rem}\label{identification-K-base}
The function $r^{(0)}_\infty$ also can be regarded as an affine function on $\mathrm{T}^+(\Kzero(A))$, the cone of all positive homomorphisms $\Kzero(A) \to \Real$ (the simplex $\mathrm{S}_u(G)$ is a base for $\mathrm{T}^+(\Kzero(A))$).
\end{rem}

\begin{example}[\cite{HP-Villadsen}]\label{HP-example}

Let $X$ be K-contractible. Consider two UHF-Villadsen algebras $A^{(1)}:=A(X, (n^{(1)}_i), (k^{(1)}_i))$ and  $A^{(2)}:=A(X, (n^{(2)}_i), (k^{(2)}_i))$. Following \cite{HP-Villadsen}, introduce $k_i^{(1, 2)}$  point evaluations from $A^{(1)}_{i}$ to $A^{(2)}_{i+1}$ %by  $k_i^{(1, 2)}$, 
and $k_i^{(2, 1)}$ point evaluations from $A^{(2)}_{i}$ to $A^{(1)}_{i+1}$. Thus, the multiplicity matrices for the connecting maps $\phi_i$ and $\varphi_i$, before and after adding point evaluations, 
are given by
$$[\phi_i] = \left( \begin{array}{cc} n_i^{(1)} & \\ & n_i^{(2)} \end{array} \right) 
\quad \mathrm{and} \quad
[\varphi_i] = \left( \begin{array}{cc} n_i^{(1)} & \\ & n_i^{(2)} \end{array} \right) + \left( \begin{array}{cc} k_i^{(1)} & k_i^{(2, 1)} \\ k_i^{(1, 2)} & k_i^{(2)} \end{array} \right).
$$
Hence,
$$
\left( 
\begin{array}{c}
u_{i, 1} \\
u_{i, 2}
\end{array}
\right)
=
\left( 
\begin{array}{cc}
n_{i-1}^{(1)} & \\
 & n_{i-1}^{(2)}
\end{array}
\right)
\cdots
\left( 
\begin{array}{cc}
n_1^{(1)} & \\
 & n_1^{(2)}
\end{array}
\right)
\left( 
\begin{array}{c}
1 \\
1
\end{array}
\right)
$$
and
$$
\left( 
\begin{array}{c}
\tilde{u}_{i, 1} \\
\tilde{u}_{i, 2}
\end{array}
\right)
=
\left( 
\begin{array}{cc}
n_{i-1}^{(1)} + k_{i-1}^{(1)} & k_{i-1}^{(2, 1)} \\
k_{i-1}^{(1, 2)} & n_{i-1}^{(2)} + k_{i-1}^{(2)}
\end{array}
\right)
\cdots
\left( 
\begin{array}{cc}
n_1^{(1)} + k_1^{(1)} & k_1^{(2, 1)} \\
k_1^{(1, 2)} & n_1^{(2)} + k_1^{(2)}
\end{array}
\right)
\left( 
\begin{array}{c}
1 \\
1
\end{array}
\right).
$$

Note that the affine map $$[\varphi_i]_0^*: [0, 1] \cong \mathrm{S}_u(\Kzero(A_i))  \leftarrow \mathrm{S}_u(\Kzero(A_{i+1})) \cong [0, 1]$$ is determined by the extreme point assignments 
$$ [0, 1] \ni \frac{k_i^{(2, 1)} \tilde{u}_{i, 2}}{(n^{(1)}_i + k^{(1)}_i)\tilde{u}_{i, 1} + k_i^{(2, 1)} \tilde{u}_{i, 2}}  \mapsfrom 0 \quad \mathrm{and} \quad   [0, 1] \ni \frac{(n^{(2)}_i + k^{(2)}_i)\tilde{u}_{i, 2} }{k_i^{(1, 2)} \tilde{u}_{i, 1} + (n^{(2)}_i + k^{(2)}_i)\tilde{u}_{i, 2}} \mapsfrom 1. $$
Define the compression coefficient
$$c_i = \frac{(n^{(2)}_i + k^{(2)}_i)\tilde{u}_{i, 2} }{k_i^{(1, 2)} \tilde{u}_{i, 1} + (n^{(2)}_i + k^{(2)}_i)\tilde{u}_{i, 2}} - \frac{k_i^{(2, 1)} \tilde{u}_{i, 2}}{(n^{(1)}_i + k^{(1)}_i)\tilde{u}_{i, 1} + k_i^{(2, 1)} \tilde{u}_{i, 2}}. $$

Choose $k_i^{(2, 1)}$, $k_i^{(1, 2)}$, $i=1, 2, ...$, sufficiently small that 
$$c_1 c_2 \cdots >0,$$
which implies that
\begin{equation}\label{farther-prod}
 \lim_{i\to\infty} (c_i c_{i+1} \cdots) = 1.
\end{equation} 
(This ensures that the simplex $\mathrm{S}_u(\Kzero(A))$ does not collapse to a single point, and hence $\mathrm{S}_u(\Kzero(A)) \cong [0, 1]$.)

Write the extreme points of $\mathrm{S}_u(\Kzero(A)) \cong \varprojlim([0, 1], \varphi_i^*)$ as
$$ \tau_1=(s_1, s_2, ...)  \in \prod_{i=1}^\infty [0, 1] \quad \mathrm{and} \quad \tau_2=(t_1, t_2, ...) \in \prod_{i=1}^\infty [0, 1],$$
where 
$s_1 < t_1,\ s_2 < t_2, ... .$ 
Then
$$t_i - s_i = c_i c_{i+1} \cdots, \quad i=1, 2, ... ,$$
and, by \eqref{farther-prod}, 
$$\lim_{i\to\infty} s_i = 0 \quad \mathrm{and} \quad  \lim_{i\to\infty} t_i = 1. $$

Let us calculate the function $r_\infty^{(0)}$.  For each $i = 1, 2, ...$, 
$$r^{(0)}_i(\tau_1) = \frac{1}{2}\mathrm{dim}(X) \cdot (\frac{u_{i, 1}}{\tilde{u}_{i, 1}}(1-s_i) + \frac{u_{i, 2}}{\tilde{u}_{i, 2}} \cdot s_i) \quad \mathrm{and} \quad r^{(0)}_i(\tau_2) = \frac{1}{2}\mathrm{dim}(X) \cdot (\frac{u_{i, 1}}{\tilde{u}_{i, 1}}(1-t_i) + \frac{u_{i, 2}}{\tilde{u}_{i, 2}} \cdot t_i ).$$
Hence,
$$r^{(0)}_\infty(\tau_1) = \lim_{i\to\infty}r^{(0)}_i(\tau_1) = \frac{1}{2}\mathrm{dim}(X)  \cdot \lim_{i\to\infty} \frac{u_{i, 1}}{\tilde{u}_{i, 1}} \quad \mathrm{and} \quad r^{(0)}_\infty(\tau_2) = \lim_{i\to\infty}r^{(0)}_i(\tau_2) = \frac{1}{2}\mathrm{dim}(X) \cdot \lim_{i\to\infty} \frac{u_{i, 2}}{\tilde{u}_{i, 2}}.$$

\end{example}

%\begin{thm}\label{comparison-property-0}
%The function $r_\infty$, regarded as a continuous affine function on $\mathrm{T}^+(A)$, has the following property:
%\begin{equation}\label{gap-function}
% \mathrm{d}_\tau(a) +  r_\infty([\tau]) < \mathrm{d}_\tau(b),\quad \tau \in \mathrm{T}^+(A)\setminus\{0\} \quad \Rightarrow \quad a \precsim b.
%\end{equation}
%
%On the other hand, assume that $X$ is a solid space. If $h \in \mathrm{Aff}(\mathrm{T}^+(\Kzero(A)))$ and if there is $\tau \in \mathrm{T}^+(A)$ such that $$h([\tau]) < r_\infty([\tau]),$$ then there are $a, b \in (A\otimes \mathcal K)^+$ such that $$\mathrm{d}_\tau(a) + h([\tau]) < \mathrm{d}_\tau(b),\quad \tau \in \mathrm{T}^+(A)\setminus\{0\},$$ but $a$ is not Cuntz subequivalent to $b$. Hence $r_\infty$ is the (unique) minimum among the continuous affine functions which satisfy \eqref{gap-function}.
%
%\end{thm}

\section{The comparison property of $r^{(0)}_\infty$}\label{section-comparison-property}

In this section, let us study the comparison properties of the function $r^{(0)}_\infty$. It turns out that the function $r^{(0)}_\infty$ is the smallest continuous affine function on the state space of the order-unit $\Kzero$-group which guarantees comparison. Thus, the function $r^{(0)}_\infty$ can be recovered from the Cuntz semigroup of the limit algebra $A$:

\begin{thm}\label{comparison-property}
Let $A$ be an AF-Villadsen algebra which satisfies \eqref{rapid-growth-cond} (and hence \eqref{rapid-growth-cond-2}). The function $r^{(0)}_\infty$, regarded as a function on $\tr(A)$, has the following property:
\begin{equation}\label{gap-function}
 \mathrm{d}_\tau(a) +  r^{(0)}_\infty(\tau) < \mathrm{d}_\tau(b),\quad \tau \in \tr(A) \quad \Rightarrow \quad a \precsim b,\quad a, b \in A\otimes \mathcal K.
\end{equation}
%where $\tr^+(A)$ denotes the cone of traces of $A$.

On the other hand, assume that $X$ is connected %K-contractible (i.e., $\mathrm{K}_*(\mathrm{C}(X)) = \mathrm{K}_*(\Comp)$, $*=0, 1$) 
and is a finite dimensional solid space (i.e.,  it contains a Euclidean ball of dimension $\mathrm{dim}(X)$). If $h \in \mathrm{Aff}(\mathrm{S}_u(\Kzero(A)))$ and if there is $\tau \in \tr(A)$ such that $$h(\tau) < r^{(0)}_\infty(\tau),$$ then there are $a, b \in (A\otimes\mathcal K)^+$ such that $$\mathrm{d}_\tau(a) + h(\tau) < \mathrm{d}_\tau(b),\quad \tau \in \tr(A),$$ but $a$ is not Cuntz subequivalent to $b$. Thus $r^{(0)}_\infty$ is the (unique) minimum among the continuous affine functions which satisfy \eqref{gap-function} and factor through $\mathrm{T}(A) \to \mathrm{S}_u(\Kzero(A))$.

%On the other hand, if $h \in \mathrm{Aff}(\mathrm{S}_u(\Kzero(A)))$ satisfying $h < r_\infty$ (i.e., $h \neq r_\infty$ and $h([\tau]) \leq r_\infty([\tau])$, $\tau \in \tr(A)$), then there are $a, b \in A^+$ such that
%$$\mathrm{d}_\tau(a) + h([\tau]) < \mathrm{d}_\tau(b)$$
%but $a$ is not Cuntz subequivalent to $b$.
\end{thm}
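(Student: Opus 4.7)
The plan is to treat the two assertions separately, each reducing, after passing far enough along the inductive limit, to an estimate in one of the homogeneous summands $\mathrm{M}_{\tilde u_{m,j}}(\mathrm{C}(X^{u_{m,j}}))$ of $A_m$. The crucial numerical identity is that the base dimension divided by twice the matrix size of this summand is exactly $r_{m,j}^{(0)}$, and $r_m^{(0)}\to r_\infty^{(0)}$ uniformly under \eqref{rapid-growth-cond}. The positive direction then follows from the classical Toms-type comparison theorem in $\mathrm{M}_N(\mathrm{C}(Y))$ with $\dim Y<\infty$ (pointwise rank inequality plus a $\dim(Y)/2$ correction implies Cuntz subequivalence), while the negative direction uses a Villadsen-type top-Chern-class obstruction as in \cite{Vill-perf} and \cite{ELN-Vill}.

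For the sufficiency, I would first use lower semicontinuity of $\mathrm{d}_\tau$ and the strict inequality to reduce to $a,b\in A_n\otimes\mathrm{M}_k$ for some fixed $n,k$. Next, for $m>n$, I would show that the pointwise rank of $\varphi_{n,m}(a)_j$ approximates its average on $X^{u_{m,j}}$:
$$\sup_{x\in X^{u_{m,j}}}\left|\frac{\mathrm{rank}(\varphi_{n,m}(a)_j(x))}{\tilde u_{m,j}}-\mathrm{d}_\tau(a)\right|\longrightarrow 0 \quad (m\to\infty),$$
uniformly in $j$ and in extreme traces $\tau$ supported on the $j$-th summand. Here the rank variability coming from $A_n$ is fixed while $\tilde u_{m,j}$ grows, and the relative weight of point evaluations between stages is controlled by \eqref{rapid-growth-cond-1-3}. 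Combined with $r_m^{(0)}\to r_\infty^{(0)}$ and the strict hypothesis, this yields, for $m$ large enough,
$$\mathrm{rank}(\varphi_{n,m}(a)_j(x))+\dim(X^{u_{m,j}})/2<\mathrm{rank}(\varphi_{n,m}(b)_j(x)),$$
uniformly in $x\in X^{u_{m,j}}$ and $j$; the classical comparison theorem applied in each summand then yields $a\precsim b$ in $A\otimes\mathcal K$.

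For the minimality, I would use continuity of $h$ and $r_\infty^{(0)}$ on the compact simplex $\mathrm{S}_u(\Kzero(A))=\varprojlim\mathrm{S}_u(\Kzero(A_n))$ to fix a stage $n$, an extreme state $\sigma$ of $\Kzero(A_n)$ located on some summand index $j$, and an $\varepsilon>0$ with $h(\sigma)+\varepsilon<r_n^{(0)}(\sigma)$. Since $X$ is connected, finite-dimensional, and solid, $X^{u_{n,j}}$ admits a continuous map onto a sphere $S^{u_{n,j}\dim(X)}$, and pulling back a suitable sum of Hopf bundles produces a projection $p\in\mathrm{M}_{\tilde u_{n,j}}(\mathrm{C}(X^{u_{n,j}}))$ with non-vanishing top Chern class. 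Let $a$ be $p$ in the $j$-th summand and $0$ elsewhere, and let $b$ be a trivial projection in the $j$-th summand of rank slightly greater than $\mathrm{rank}(p)$, chosen so that $\mathrm{d}_\tau(a)+h(\tau)<\mathrm{d}_\tau(b)$ uniformly in $\tau\in\tr(A)$ (made possible by the gap $\varepsilon$ and the fact that $h<r_n^{(0)}$ near $\sigma$). Because point evaluations in the subsequent $\varphi$-maps only add trivial direct summands to the bundle associated to $\varphi_{n,\infty}(p)$, the top Chern class is preserved, obstructing $\varphi_{n,\infty}(a)\precsim\varphi_{n,\infty}(b)$ by the standard dimension-count argument of \cite{Vill-perf}.

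The main obstacle is the pointwise rank estimate in the sufficiency proof: showing that the rank of $\varphi_{n,m}(a)_j$ on $X^{u_{m,j}}$ becomes uniformly close (after normalization by $\tilde u_{m,j}$) to the trace value $\mathrm{d}_\tau(a)$. This requires both the point evaluations and the non-trivial coordinate projections appearing between stages $i$ and $i+k$ to behave compatibly, which is precisely what \eqref{rapid-growth-cond-1-3} delivers. A subsidiary difficulty in the minimality proof is to confirm that the top-Chern-class obstruction survives all iterated connecting maps, including point evaluations, not just the coordinate projections as in the UHF case of \cite{ELN-Vill}; this is handled by the fact that point evaluations only add trivial direct summands, leaving the characteristic class intact.
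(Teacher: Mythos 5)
Your proposal contains genuine gaps in both directions. In the sufficiency argument, the key claimed estimate --- that $\sup_x\bigl|\mathrm{rank}(\varphi_{n,m}(a)_j(x))/\tilde u_{m,j}-\mathrm{d}_\tau(a)\bigr|\to 0$ uniformly --- is false. The normalized rank of $\varphi_{n,m}(a)_j$ at $x$ is a weighted average of the normalized ranks of $a$ at the projected coordinates $\pi_k(x)$ and at the evaluation points; since the tuple $(\pi_k(x))_k$ ranges over essentially all tuples in the base of $A_n$, and since the coordinate-projection weight is close to $1$ under \eqref{rapid-growth-cond}, the rank function of $\varphi_{n,m}(a)$ retains essentially the full oscillation of the rank function of $a$ (take $a$ a positive function vanishing on a set of positive measure to see this). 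It is not the case that ``the rank variability coming from $A_n$ is fixed while $\tilde u_{m,j}$ grows'': the number of copies of $a$ grows proportionally to $\tilde u_{m,j}$, so the variability does not wash out after normalization. Moreover $\mathrm{d}_\tau(a)$ itself is non-constant in $\tau$, so no single uniform limit can exist. The step you are missing is how to convert the strict inequality over $\tr(A)$ into a \emph{pointwise} rank inequality at a finite stage: the paper does this by inserting a trivial projection $q$ with $r^{(0)}_k(\tau)<\tau(q)<r^{(0)}_k(\tau)+\delta/2$, invoking simplicity to get $(N+1)([(a-\eps)_+]+[q])<N[b]$ (Proposition 3.2 of \cite{RorUHF-II}), and descending to $A_n$ via Lemma 5.6 of \cite{Niu-MD}; only then does one obtain $\mathrm{rank}(a_n(x))+\tfrac12\dim(X^{u_{n,j}})<\mathrm{rank}(b_n(x))$ at every point, to which Theorem 4.6 of \cite{Toms-Comp-DS} applies.

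In the minimality argument, supporting $a$ and $b$ on a single summand $j$ of $A_n$ cannot work. First, the hypothesis $\mathrm{d}_\tau(a)+h(\tau)<\mathrm{d}_\tau(b)$ must hold for \emph{all} $\tau\in\tr(A)$; for traces whose mass on the descendants of summand $j$ is small, both $\mathrm{d}_\tau(a)$ and $\mathrm{d}_\tau(b)$ are scaled down by that mass while $h(\tau)$ is not, so the inequality fails (equivalently, $b$ cannot be only ``slightly'' larger than $a$: its normalized rank must exceed that of $a$ by more than $h$ on every summand of every later stage). Second, even after fixing that, the Chern-class obstruction at stage $k$ lives in degree proportional to the weight of summand $j$'s coordinate-projection descendants inside the target summand $j_k$, which in a general Bratteli diagram is a possibly small fraction, while the trivial complement one must exclude has normalized rank at least $h(\tau)$; the obstruction is then diluted away by the contributions of the other summands (and of the point evaluations, whose proportion must be quantitatively controlled, not merely observed to be trivial bundles). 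This multi-summand bookkeeping is exactly the new difficulty of the AF case over the UHF case of \cite{ELN-Vill}: the paper constructs $p=\bigoplus_{j}p_j$ over \emph{all} summands, with nontrivial bundles only on the summands $j\notin S$ where $r^{(0)}_{n,j}-h_{n,j}>\delta/4$, and proves that the total weight $\gamma$ of $S$ feeding into the extremal summand $j_k$, together with the point-evaluation weight, is below $\delta/(32(M+1))$ --- see \eqref{small-over} and \eqref{far-2} --- before the degree count can close. Your proposal does not contain this step, and without it the construction fails.
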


\begin{proof}

Let $a, b \in (A\otimes \mathcal K)^+$ be positive elements satisfying $$ \mathrm{d}_\tau(a) + r^{(0)}_\infty(\tau) < \mathrm{d}_\tau(b),\quad \tau \in \tr(A).$$ 

Fix an arbitrary $\eps>0$ for the time being.
Since the function $r^{(0)}_\infty$ is continuous and the rank function $\tau \mapsto \mathrm{d}_\tau(b)$ is lower semicontinuous, % and strictly positive, 
by the simplicity of $A$ and the compactness of $\mathrm{T}(A)$,  there is $\delta>0$ such that
$$ \mathrm{d}_\tau((a-\eps)_+) + r^{(0)}_\infty(\tau)+ \delta < \mathrm{d}_\tau(b),\quad \tau \in \tr(A).$$
Since $(r_k)$ converges to $r^{(0)}_\infty$ uniformly, there is $k \in \mathbb N$ large enough that
$$ \mathrm{d}_\tau((a-\eps)_+) + r^{(0)}_k(\tau) + \frac{\delta}{2} < \mathrm{d}_\tau(b),\quad \tau \in \tr(A),$$
and 
\begin{equation}\label{compress-r}
0 < r^{(0)}_k(\tau) - r^{(0)}_n(\tau) < \frac{\delta}{8},\quad \tau \in \tr(A_n),\ n > k, 
\end{equation}
where $r^{(0)}_k$ and $r^{(0)}_n$ are regarded as elements of $\mathrm{Aff}(\mathrm{S}_u(\Kzero(A_n)))$.

Note that there is a (trivial) projection $q \in A_n$, for a sufficiently large $n$, such that
\begin{equation}\label{small-q} 
r^{(0)}_k(\tau) + \frac{\delta}{8} < \tau(q) < r^{(0)}_k(\tau) + \frac{\delta}{2},\quad \tau \in \tr(A_n),
\end{equation}
and then
$$ \mathrm{d}_\tau( (a-\eps)_+) \oplus q) \leq \mathrm{d}_\tau((a-\eps)_+) + r^{(0)}_k(\tau) + \frac{\delta}{2} < \mathrm{d}_\tau(b),\quad \tau \in \tr(A).$$ 

Since $A$ is simple, this implies that there is $N\in \mathbb N$ such that $$(N+1)([(a-\eps)_+] + [q]) < N[b],$$ where $[\cdot]$ denotes the Cuntz class. (See the proof of Proposition 3.2 of \cite{RorUHF-II}.)  %for some projection $r_k$ of \eqref{proj-r}.
Then, by Lemma 5.6 of \cite{Niu-MD}, there are $a_n, b_n, \in A_n$, for $n$ sufficiently large, such that
$$ \norm{a_n - (a-\eps)_+} <\eps, \quad \norm{b_n - b} < \eps, \quad b_n \precsim b, $$
and
$$(N+1)([a_n] + [q]) < N[b_n],$$
which implies 
\begin{equation}\label{gap-q}
\mathrm{tr}(a_n(x)) + \mathrm{tr}(q(x)) < \mathrm{tr}(b_n(x)),\quad x \in X^{u_{n, j}},\quad j=1, ..., s_n,
\end{equation}
where $\mathrm{tr}$ denotes the normalized trace of a matrix algebra.

Note that, by \eqref{compress-r} and \eqref{small-q}, 
$$
r^{(0)}_n(\tau) \approx_{\delta/8} r^{(0)}_k(\tau) < \tau(q) - \frac{\delta}{8},\quad \tau \in \tr(A_n),
$$
and hence
$$r^{(0)}_n(\tau) < \tau(q),\quad \tau \in \tr(A_n).$$
By \eqref{gap-q}, 
$$ \mathrm{tr}(a_n(x)) + r_n(\mathrm{tr}_x) < \mathrm{tr}(a_n(x))  + \mathrm{tr}(q(x)) <  \mathrm{tr}(b_n(x)),\quad x \in X^{u_{n, j}},\ j=1, ..., s_n.$$
%where $\mathrm{tr}$ denotes the normalized trace of a matrix algebra.
Since
$$ r^{(0)}_{n, j} = \frac{\mathrm{dim}(X)}{2} \cdot \frac{{u_{n, j}}}{\tilde{u}_{n, j}},\quad j=1, ..., s_n,$$
this implies 
\begin{equation}\label{rank-comp}
\mathrm{rank}(a_n(x)) + \frac{1}{2}\mathrm{dim}(X^{u_{n, j}}) < \mathrm{rank}(b_n(x)),\quad x \in X^{u_{n, j}},\ j=1, ..., s_n.
\end{equation} 
By Theorem 4.6 of \cite{Toms-Comp-DS}, one has $a_n \precsim b_n$, and hence
$$(a-2\eps)_+ \precsim ((a-\eps)_+-\eps)_+ \precsim a_n \precsim b_n \precsim b.$$
Since $\eps$ is arbitrary, one has $a \precsim b$. This shows \eqref{gap-function}.

Now, let us show that $r^{(0)}_\infty$ is the smallest continuous affine function (on $\mathrm{S}_u(\Kzero(A))$) that satisfies \eqref{gap-function}.

Let $h \in \mathrm{Aff}(\mathrm{S}_u(\Kzero(A)))$ be such that $h([\tau]) < r^{(0)}_\infty([\tau])$ for some $\tau \in \tr(A)$. 
Set 
$$M = \max\{h(\tau): \tau \in \mathrm{S}_u(\Kzero(A))\}$$
and 
\begin{equation}\label{max-gap-1}
\delta = \sup\{r^{(0)}_\infty(\tau) - h(\tau): \tau \in \mathrm{S}_u(\Kzero(A))\} > 0.
\end{equation}
Since $r_\infty$ and $h$ are continuous, and $\mathrm{S}_u(\Kzero(A))$ is compact, there is $\tau_0 \in \mathrm{S}_u(\Kzero(A))$ such that
\begin{equation}\label{max-gap-2}
r^{(0)}_\infty(\tau_0) - h(\tau_0) = \delta.
\end{equation}

Recall that (since $X$ is connected) $\mathrm{Aff}(\mathrm{S}_u(\Kzero(A)))$ has a standard inductive limit decomposition:
\begin{displaymath}
\xymatrix{
(\Real^{s_1}, \norm{\cdot}_\infty) \ar[r] & (\Real^{s_2}, \norm{\cdot}_\infty) \ar[r] & \cdots \ar[r] & \mathrm{Aff}(\mathrm{S}_u(\Kzero(A))),
}
\end{displaymath}
where the connecting map $\Real^{s_n} \to \Real^{s_{n+1}}$ is given by
\begin{equation}\label{connecting-affine-map}
(t_1, ..., t_{s_n}) \mapsto ( \frac{1}{\tilde{u}_{n+1, 1}}\sum_{i=1}^{s_n} (m_{i, 1}^{(n)} + \abs{E_{i, 1}^{(n)}}) (\tilde{u}_{n, i} t_i), ..., \frac{1}{\tilde{u}_{n+1, s_{n+1}}}\sum_{i=1}^{s_n} (m_{i, s_{n+1}}^{(n)} + \abs{E_{i, s_{n+1}}^{(n)}})(\tilde{u}_{n, i} t_i) %\frac{\sum_{i=1}^{s_n}(m_{i, s_{n+1}}^{(n)} + \abs{E_{i, s_{n+1}}^{(n)}})\tilde{u}_{n, i} t_i }{ \sum_{i=1}^{s_n}(m_{i, s_{n+1}}^{(n)} + \abs{E_{i, s_{n+1}}^{(n)}})\tilde{u}_{n, i}}
). 
\end{equation}

Choose $\eps>0$ sufficiently small that
\begin{equation}\label{small-eps}
\frac{2\eps}{\delta + \frac{3}{4}\eps} < \frac{\delta}{64(M+1)}.
\end{equation}
%Pick $\eps \in (0, \frac{3\delta^2}{256(M+1)})$. 
By \eqref{max-gap-1} and \eqref{max-gap-2}, a compactness argument shows that if  $n$ is sufficiently large, there is $h_n \in \Real^{s_n}$ such that
$$ r^{(0)}_{n, j_n} - h_{n, j_n} \approx_{\eps/3} \delta $$
for some $j_n \in \{1, ..., s_n\}$, 
\begin{equation}\label{approx-1}
\norm{\varphi_{n, \infty}^*(r^{(0)}_n) - r^{(0)}_\infty}_\infty < \eps/3 \quad \mathrm{and} \quad \norm{\varphi_{n, \infty}^*(h_n) - h} _\infty < \eps/3,
\end{equation}
and
%$$ \delta - \eps < r_{n, j} - h_{n, j},\quad j=1, ..., s_n. $$
\begin{equation}\label{gap-upper-bd}
  r^{(0)}_{n, j} - h_{n, j} < \delta + \eps,\quad j=1, ..., s_n. 
  \end{equation}

Also assume $n$ is sufficiently large that 
\begin{equation}\label{upper-bd-1}
h_{n, j} + \frac{\delta}{4} < (M + 1)
\end{equation}
and, furthermore, using \eqref{rapid-growth-cond-2}, for any $k > n$,
\begin{equation}\label{far-2}
\frac{1}{\tilde{u}_{k, j}} \sum_{i=1}^{s_n} [E_{n, k}]_{i, j}\tilde{u}_{n, i} < \frac{\delta}{64(M+1)},\quad j=1, ..., s_k.
\end{equation}

By simplicity, one may also assume that $\tilde{u}_{n, j}$, $j=1, ...,  s_n$, are sufficiently large that for any positive real number $t$, there is $d \in \mathbb N$  such that 
\begin{equation}\label{large-u} 
\frac{\delta}{8} < \frac{d}{\tilde{u}_{n, j}} - t < \frac{\delta}{4}. 
\end{equation}

For any $k>n$, consider $h_{k} := \phi^*_{n, k}(h_n)$ and $r_k = \phi_{n, k}^*(r_n)$. Note that there is $j_{k}$ such that
$$r^{(0)}_{k, j_{k}} - h_{k, j_{k}} \approx_\eps \delta. $$
Then, using \eqref{connecting-affine-map}, \eqref{gap-upper-bd}, and the  definition of $S$ below, one has 
\begin{eqnarray*}
\delta &\approx_\eps & r^{(0)}_{k, j_{k}} - h_{k, j_{k}} \\
& = & \frac{1}{\tilde{u}_{k, j_{k}}} \sum_{i=1}^{s_n}(m_{i, j_{k}} + \abs{ E_{i, j_{k}} })(\tilde{u}_{n, i}) (r^{(0)}_{n, i} - h_{n, i}) \\
& = & \frac{1}{\tilde{u}_{k, j_{k}}} \sum_{i\notin S}(m_{i, j_{k}} + \abs{ E_{i, j_{k}} })(\tilde{u}_{n, i}) (r^{(0)}_{n, i} - h_{n, i}) + 
 \frac{1}{\tilde{u}_{k, j_{k}}} \sum_{i \in S}(m_{i, j_{k}} + \abs{ E_{i, j_{k}} })(\tilde{u}_{n, i}) (r^{(0)}_{n, i} - h_{n, i}) \\
& \leq & (\frac{1}{\tilde{u}_{k, j_{k}}} \sum_{i\notin S}(m_{i, j_{k}} + \abs{ E_{i, j_{k}} })(\tilde{u}_{n, i}) )(\delta+ \eps) +  (\frac{1}{\tilde{u}_{k, j_{k}}} \sum_{i\in S}(m_{i, j_{k}} + \abs{ E_{i, j_{k}} })(\tilde{u}_{n, i}) )\frac{\delta}{4}
 \\
& = & (1- \gamma) (\delta+\eps) + \gamma \frac{\delta}{4},
\end{eqnarray*}
where 
$$S:=\{i = 1, ..., s_n: r^{(0)}_{n, i} - h_{n, i} \leq \frac{\delta}{4} \}$$
and
$$\gamma := \frac{1}{\tilde{u}_{k, j_{k}}} \sum_{i\in S}(m_{i, j_{k}} + \abs{ E_{i, j_{k}} })(\tilde{u}_{n, i}). $$
Hence, 
$$(1- \gamma) (\delta + \eps) + \gamma \frac{\delta}{4} > \delta - \eps,$$
which, together with \eqref{small-eps}, implies
$$\gamma < \frac{2\eps}{\delta + \frac{3}{4}\eps} < \frac{\delta}{64(M+1)},$$
or
\begin{equation}\label{small-over}
\frac{1}{\tilde{u}_{k, j_{k}}} \sum_{i\in S}(m_{i, j_{k}} + \abs{ E_{i, j_{k}} })(\tilde{u}_{n, i}) < \frac{\delta}{64(M+1)}.
\end{equation}

By the choice of $\tilde{u}_{n, j}$, $j=1, ..., s_n$ (see \eqref{large-u}), %sufficiently large in a certain sense relative to $\delta$, 
there are natural numbers $d_j$, $j=1, ..., s_n$, such that 
\begin{equation}\label{ratn-appro}
\frac{\delta}{8} < \frac{d_j}{\tilde{u}_{n, j}} - h_{n, j} <  \frac{\delta}{4}. 
\end{equation}
%and, if $h_j < r_j - \frac{\delta}{4}$, then $b_j = \tilde{u}_{n, j}$.
Note that, together with \eqref{upper-bd-1}, one has
\begin{equation}\label{small-d-1}
d_i < (M+1) \tilde{u}_{n, j},\quad j=1, ..., s_n.
\end{equation}

If $j \notin S$ (so that $h_{n, j} < r_{n, j} - {\delta}/{4}$), one has $$\frac{d_j}{\tilde{u}_{n, j}} < h_{n, j} + \frac{\delta}{4} < r^{(0)}_{n, j} = \frac{1}{2}\mathrm{dim}(X)\frac{u_{n, j}}{\tilde{u}_{n, j}},$$ and so $$2d_j < u_{n, j}\mathrm{dim}(X).$$
Since $X$ is solid, there is a $(2d_j+1)$-dimensional Euclidean ball $B_j \subseteq X^{u_{n, j}}$, and there is a complex vector bundle $E_j$ over $\partial B_j$ $(\cong S^{2d_j})$ such that
$$ \mathrm{rank}(E_j) = d_j \quad \mathrm{and} \quad c_{d_j}(E_j) \in \mathrm{H}^{2d_j}(S^{2d_j}) \setminus\{0\}. $$
%(Recall that the total Chern class of E is 1+e.) 
(Such a vector bundle exists, as, otherwise, the $d_j$-th Chern class of every vector bundle would be trivial, and 
then the Chern character would not induce a rational isomorphism between the K-group and the
cohomology group of the sphere $S^{2d_j}$.)

Denote by $p_j \in \mathrm{C}(S^{2d_j}) \otimes \mathcal K$ the projection corresponding to $E_j$, and extended to a positive element of $\mathrm{C}(X^{u_{n, j}}) \otimes \mathcal K$ with rank at least $d_j$.

If $j \in S$, just choose $p_j$ to be a constant function with rank $d_j$. Set $$p = \bigoplus_{j=1}^{s_n} p_j. $$
Then, by \eqref{ratn-appro},
\begin{equation}\label{p-rank-lbd}
\mathrm{d}_{\mathrm{tr}_x}(p) = \frac{\mathrm{rank}(p_j)}{\tilde{u}_{n, j}} \geq \frac{d_j}{ \tilde{u}_{n, j} } > h_{n, j} + \frac{\delta}{8} ,\quad x \in X_{n, j},\ j = 1, ..., s_n.
\end{equation}

Choose $q \in A_n$ to be a trivial projection such that
$$ \frac{\delta}{32} < \tau(q) < \frac{\delta}{16},\quad \tau \in\tr(A_n). $$ Then it follows from \eqref{approx-1} and \eqref{p-rank-lbd} that 
$$\mathrm{d}_\tau(q) + h(\tau) < h(\tau) + \frac{\delta}{16} < h_n(\tau) + \frac{\delta}{8} < \mathrm{d}_\tau(p),\quad \tau \in \tr(A). $$

Let us show that $q$ actually is not Cuntz subequivalent to $p$. 
For any $k > n$, consider the direct summand $A_{k, j_{k}}$, and consider the closed subset 
$$D:=\underbrace{S^{2d_1} \times \cdots \times S^{2d_1}}_{m_{1, j_k}} \times \cdots \times \underbrace{S^{2d_1} \times \cdots \times S^{2d_1}}_{m_{1, j_k}} \subseteq X^{u_{k, j_k}}.$$ 
Then the restriction of $p$ to this closed subset is equivalent to the projection onto a vector bundle with total Chern class non-zero at degree
$$ 2\sum_{i \notin S} {m_{i, j_k}}d_i,$$
and by Remark 3.2 of \cite{ELN-Vill}, this implies that any trivial sub-bundle must have rank at most 
$$ \sum_{i = 1}^{s_n} ({m_{i, j_{k}}} + \abs{E_{i, j_{k}}})d_i - \sum_{i \notin S} {m_{i, j_{k}}}d_i.$$
Then, by \eqref{small-d-1}, \eqref{small-over}, and \eqref{far-2}, the (normalized)  trace of the projection associated to this trivial sub-bundle is at most 
\begin{eqnarray*}
&& \frac{1}{\tilde{u}_{k, j_{k}}}(\sum_{i = 1}^{s_n} ({m_{i, j_{k}}} + \abs{E_{i, j_{k}}})d_i -   \sum_{i \notin S} {m_{i, j_{k}}}d_i) \\
 & = &  \frac{1}{\tilde{u}_{k, j_{k}}}(\sum_{i \in S} ({m_{i, j_{k}}} + \abs{E_{i, j_{k}}})d_i + \sum_{i \notin S} ({m_{i, j_{k}}} + \abs{E_{i, j_{k}}})d_i -   \sum_{i \not \in S} {m_{i, j_{k}}}d_i ) \\
 & = &  \frac{1}{\tilde{u}_{k, j_{k}}}(\sum_{i \in S} ({m_{i, j_{k}}} + \abs{E_{i, j_{k}}})d_i + \sum_{i \notin S} \abs{E_{i, j_{k}}}d_i) \\
 & < & \frac{M+1}{\tilde{u}_{k, j_{k}}}(\sum_{i \in S} ({m_{i, j_{k}}} + \abs{E_{i, j_{k}}})\tilde{u}_{n, i} + \sum_{i =1}^{s_n} \abs{E_{i, j_{k}}}\tilde{u}_{n, i}) \\
& < &  (M+1)(\frac{\delta}{64(M+1)} + \frac{\delta}{64(M+1)}) = \frac{\delta}{32}.
 \end{eqnarray*}
Since the trace of the restriction of $q$ to $D$ is larger than $\delta/32$, this implies that $q$ is not Cuntz subequivalent to $p$, as asserted.
\end{proof}

Since the trace simplex $\tr(A)$ is a base for the cone $\mathrm{T}^+(A)$, Theorem \ref{comparison-property} can be reformulated in terms of $\mathrm{T}^+(A)$ and its affine functions:

\begin{thm}\label{comparison-property-0}
Let $A$ be an AF-Villadsen algebra which satisfies \eqref{rapid-growth-cond}. The function $r^{(0)}_\infty$, regarded as a continuous affine function on $\mathrm{T}^+(A)$, has the following property:
\begin{equation}\label{gap-function-0}
 \mathrm{d}_\tau(a) +  r^{(0)}_\infty(\tau) < \mathrm{d}_\tau(b),\quad \tau \in \mathrm{T}^+(A)\setminus\{0\} \quad \Rightarrow \quad a \precsim b.
\end{equation}

On the other hand, assume that the seed space $X$ is a finite dimensional, connected, and  solid space. If $h \in \mathrm{Aff}(\mathrm{T}^+(\Kzero(A)))$, where $\mathrm{T}^+(\Kzero(A))$ denotes the cone $\Real^+\mathrm{S}_u(\Kzero(A))$,  and if there is $\tau \in \mathrm{T}^+(A)$ such that $$h([\tau]_0) < r^{(0)}_\infty([\tau]_0),$$ then there are $a, b \in (A\otimes \mathcal K)^+$ such that $$\mathrm{d}_\tau(a) + h([\tau]_0) < \mathrm{d}_\tau(b),\quad \tau \in \mathrm{T}^+(A)\setminus\{0\},$$ but $a$ is not Cuntz subequivalent to $b$. Hence $r^{(0)}_\infty$ is the (unique) minimum among the continuous affine functions which factor through $\mathrm{T}^+(A) \to \mathrm{T}^+(\Kzero(A))$ and satisfy \eqref{gap-function-0}.

\end{thm}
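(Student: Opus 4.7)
The plan is to deduce Theorem~\ref{comparison-property-0} directly from Theorem~\ref{comparison-property} by a short homogeneity/base argument, exploiting the fact that $\tr(A)$ is a compact base for the cone $\mathrm{T}^+(A)$ and, analogously, $\mathrm{S}_u(\Kzero(A))$ is a compact base for the cone $\mathrm{T}^+(\Kzero(A))$. Every nonzero $\tau \in \mathrm{T}^+(A)$ can then be written uniquely as $\tau = \lambda \sigma$ with $\lambda > 0$ and $\sigma \in \tr(A)$, and similarly on the $\Kzero$ side; continuous affine functions on a cone that vanish at the apex are precisely the homogeneous (degree one) extensions of their restrictions to a base.

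First I would verify the homogeneity of the relevant quantities. The dimension function satisfies $\mathrm{d}_{\lambda\tau}(a) = \lambda \mathrm{d}_\tau(a)$ for all $a \in (A\otimes \mathcal K)^+$ and $\lambda > 0$. The function $r^{(0)}_\infty$, originally an element of $\mathrm{Aff}(\mathrm{S}_u(\Kzero(A)))$ and then regarded as a function on $\mathrm{T}^+(A)$ via pull-back along $\tau \mapsto [\tau]_0$ followed by the homogeneous extension from the base $\mathrm{S}_u(\Kzero(A))$ to the cone (cf.\ Remark~\ref{identification-K-base}), is likewise homogeneous. Hence the inequality $\mathrm{d}_\tau(a) + r^{(0)}_\infty(\tau) < \mathrm{d}_\tau(b)$ for all $\tau \in \mathrm{T}^+(A)\setminus\{0\}$ is equivalent (by rescaling each such $\tau$ to its base point) to the same inequality on $\tr(A)$, and the first assertion of Theorem~\ref{comparison-property} then yields $a \precsim b$, establishing \eqref{gap-function-0}.

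For the converse, suppose $h \in \mathrm{Aff}(\mathrm{T}^+(\Kzero(A)))$ and $h([\tau_0]_0) < r^{(0)}_\infty([\tau_0]_0)$ for some $\tau_0 \in \mathrm{T}^+(A)$. Rescaling, I may assume $\tau_0 \in \tr(A)$, so that the restriction $h|_{\mathrm{S}_u(\Kzero(A))}$ is a continuous affine function strictly below $r^{(0)}_\infty$ at $[\tau_0]_0$. Applying the second half of Theorem~\ref{comparison-property} to this restriction produces $a, b \in (A\otimes \mathcal K)^+$ with $a$ not Cuntz subequivalent to $b$ and $\mathrm{d}_\tau(a) + h([\tau]_0) < \mathrm{d}_\tau(b)$ for all $\tau \in \tr(A)$; homogeneity of both sides in $\tau$ again extends the strict inequality to every $\tau \in \mathrm{T}^+(A)\setminus\{0\}$, as required.

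The main (indeed only) point to check with care is the homogeneous-extension dictionary between $\mathrm{Aff}(\mathrm{T}^+(\Kzero(A)))$ and $\mathrm{Aff}(\mathrm{S}_u(\Kzero(A)))$, and likewise between continuous affine functions on the trace cone and the trace simplex; this is standard, with the normalization (already implicit in Theorem~\ref{low-env-gn}) that affine functions on a cone vanish at the apex. Beyond this routine identification, no new obstacle appears: Theorem~\ref{comparison-property} does all the work.
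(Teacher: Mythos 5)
Your proposal is correct and is essentially the paper's own argument: the paper gives no separate proof of Theorem~\ref{comparison-property-0}, merely noting that since $\tr(A)$ is a base for the cone $\mathrm{T}^+(A)$, the statement is a reformulation of Theorem~\ref{comparison-property}, which is exactly the homogeneity/rescaling dictionary you spell out. The only detail worth keeping explicit is the one you already flag (that a strict inequality at $\tau_0$ forces $\tau_0 \neq 0$, so rescaling to the base is always possible); otherwise nothing is missing.
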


\begin{rem}
In Theorem \ref{low-env-gn} and Remark \ref{uniq-crf} of Section \ref{general-gap-functions}, it will be shown that, for a UHF-Villadsen algebra $A$,  the infimum of all continuous affine functions on $\mathrm{T}^+(A)$ which satisfy \eqref{gap-function-0}, but not necessarily factoring through $\mathrm{T}^+(\Kzero(A))$ (gap functions; see Definition \ref{defn-gap-fctn}), still has a similar comparison property. However, the function $r_\infty$ might not be continuous, a possible example being with the seed space $[0, 1] \vee [0, 1]^2$, and does not necessarily factor through $\mathrm{T}^+(\Kzero(A))$, as follows from Corollary \ref{different-seed}.
\end{rem}

\begin{cor}\label{aut-fix}
Let $A$ be an AF-Villadsen algebra with the seed space to be finite dimensional, connected, and solid, and assume  that $A$ satisfies \eqref{rapid-growth-cond}. Let $\sigma \in \mathrm{Aut}(A)$. Then $$r^{(0)}_\infty((([\sigma]_0)^*(\tau))) = r^{(0)}_\infty(\tau),\quad \tau \in \mathrm{S}_u(\Kzero(A)).$$
%Let $A(X, G, \mathcal E)$ and $B(Y, H, \mathcal F)$ be two Villadsen algebras. Then, if $\mathrm{Cu}(A) \cong \mathrm{Cu}(B)$, then $\mathrm{S}_u(\Kzero(A)) \cong \mathrm{S}_u(\Kzero(B))$, and, under this isomorphism,  $r_\infty^{(A)} = r_\infty^{(B)}$ (recall that the construction of the function $r_\infty$ depends on the Brattelli diagram and the point-evaluations).
\end{cor}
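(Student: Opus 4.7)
The plan is to use the characterization of $r^{(0)}_\infty$ established in Theorem \ref{comparison-property} (equivalently Theorem \ref{comparison-property-0}) as the pointwise minimum among continuous affine functions on $\mathrm{S}_u(\Kzero(A))$ that satisfy the gap-comparison property \eqref{gap-function}. Since this minimality characterization is intrinsic to the Cuntz semigroup and the pairing with traces, both of which are preserved by any $*$-automorphism $\sigma$, the function $r^{(0)}_\infty$ must be invariant under the induced action of $\sigma$ on $\mathrm{S}_u(\Kzero(A))$.

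First I would fix conventions for the equivariance. Extend $\sigma$ to $A \otimes \mathcal K$ and note the standard identities $\mathrm{d}_\tau(\sigma(c)) = \mathrm{d}_{\tau\circ\sigma}(c)$ and, under $\tau \mapsto s_\tau$, the map $\tau \mapsto \tau\circ\sigma$ on $\tr(A)$ intertwines with $[\sigma]_0^*$ on $\mathrm{S}_u(\Kzero(A))$. In particular, the continuous affine map $[\sigma]_0^*$ is an affine homeomorphism of $\mathrm{S}_u(\Kzero(A))$.

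Next I would define $r' := r^{(0)}_\infty \circ [\sigma]_0^*$, which is again continuous and affine on $\mathrm{S}_u(\Kzero(A))$, and verify that $r'$ also satisfies \eqref{gap-function}. Suppose $a, b \in (A\otimes\mathcal K)^+$ satisfy
$$\mathrm{d}_\tau(a) + r'(\tau) < \mathrm{d}_\tau(b),\quad \tau\in \tr(A).$$
Replacing $\tau$ by $\tau'\circ\sigma^{-1}$ (which also ranges over all of $\tr(A)$) and applying the equivariance identities above, one obtains
$$\mathrm{d}_{\tau'}(\sigma^{-1}(a)) + r^{(0)}_\infty(\tau') < \mathrm{d}_{\tau'}(\sigma^{-1}(b)),\quad \tau'\in\tr(A).$$
By the comparison property of $r^{(0)}_\infty$ (Theorem \ref{comparison-property}), this yields $\sigma^{-1}(a) \precsim \sigma^{-1}(b)$, and since Cuntz subequivalence is preserved by automorphisms, $a \precsim b$.

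Finally I would invoke the minimality clause of Theorem \ref{comparison-property}: any continuous affine function on $\mathrm{S}_u(\Kzero(A))$ satisfying \eqref{gap-function} dominates $r^{(0)}_\infty$ pointwise. Applying this to $r'$ gives $r^{(0)}_\infty\circ [\sigma]_0^* \geq r^{(0)}_\infty$, and running the same argument with $\sigma^{-1}$ in place of $\sigma$ yields the reverse inequality $r^{(0)}_\infty\circ [\sigma]_0^* \leq r^{(0)}_\infty$. Hence equality, as required. The only delicate point is keeping the direction of the $\sigma$-action on traces versus on $\Kzero$-states consistent so that the substitution $\tau \mapsto \tau'\circ\sigma^{-1}$ correctly transports the hypothesis; no genuine obstacle arises beyond this bookkeeping, since the heavy lifting is already contained in Theorem \ref{comparison-property}.
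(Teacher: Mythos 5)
Your proposal is correct and follows essentially the same route as the paper: the paper's (much terser) proof likewise observes that $r^{(0)}_\infty \circ ([\sigma]_0)^*$ inherits the minimality characterization of Theorem \ref{comparison-property} and hence must coincide with $r^{(0)}_\infty$. Your version simply fills in the equivariance bookkeeping (pulling back $a, b$ via $\sigma^{-1}$ to verify the gap property for $r' = r^{(0)}_\infty \circ [\sigma]_0^*$, then running both $\sigma$ and $\sigma^{-1}$ to get the two inequalities), which the paper leaves implicit.
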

\begin{proof}
By Theorem \ref{comparison-property}, the function $r^{(0)}_\infty$ is the smallest function which has the comparison property \eqref{gap-function}, and therefore the function $r^{(0)}_\infty \circ ([\sigma]_0)^*$ is also the smallest function which has the comparison property \eqref{gap-function}. Therefore $r^{(0)}_\infty \circ ([\sigma]_0)^* = r^{(0)}_\infty.$
\end{proof}

%It follows from Theorem \ref{comparison-property} that the radius of comparison of $A$ equals to the maximal value of the function $r_\infty$. Indeed, one can recover the radius of comparison of each unital hereditary sub-C*-algebra in the following way.

It is straightforward, as we shall now show, that
$$\mathrm{rc}(A) = \max\{r^{(0)}_\infty(\tau): \tau \in \tr(A)\}.$$
In fact, the radius of comparison of any unital hereditary sub-C*-algebra of $A\otimes \mathcal K$ can be recovered in a similar way.

\begin{thm}\label{rc-general}
Let $A$ be a simple unital C*-algebra such that $\tr(A) \neq \O$, and let $r \in \mathrm{Aff}(\mathrm{T}^+(A))$ have the following three properties:
\begin{enumerate}
\item The function $r$ factors though $\mathrm{T}^+(A) \to \mathrm{T}^+(\Kzero(A))$,

\item\label{comp-property} The function $r$ has the property that for any $a, b \in (A \otimes \mathcal K)^+$, $$ \mathrm{d}_\tau(a) + r(\tau) < \mathrm{d}_\tau(b),\quad \tau \in\mathrm{T}^+(A)\quad \Longrightarrow \quad a \precsim b.$$

\item $r$ is the smallest element of $\mathrm{Aff}(\mathrm{T}^+(A))$  factoring though $\mathrm{T}^+(A) \to \mathrm{T}^+(\Kzero(A))$ and having  the comparison property (\ref{comp-property}), in the following sense: if $h \in \mathrm{Aff}(\mathrm{T}^+(A))$, $h$  factors through $\mathrm{T}^+(A) \to \mathrm{T}^+(\Kzero(A))$ and $h$ has the comparison property (\ref{comp-property}), then $$r(\tau) \leq h(\tau),\quad \tau \in \mathrm{T}^+(A).$$
%  such that $h(\tau_0) < r(\tau_0)$ for some $\tau_0 \in \mathrm{T}^+(A)$, then $h$ does not have the comparison property (\ref{comp-property}).

\end{enumerate}
Then, for any projection $p \in A \otimes \mathcal K$, one has
$$\mathrm{rc}(p(A \otimes \mathcal K)p) = \max\{r(\tau): \tau(p) = 1,\ \tau \in \mathrm{T}^+(A)\}.$$
\end{thm}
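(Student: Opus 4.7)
The plan is to bracket $\mathrm{rc}(B)$, where $B:=p(A\otimes\mathcal K)p$, between $M$ and $M$, with $M:=\max\{r(\tau):\tau\in\mathrm T^+(A),\ \tau(p)=1\}$, by playing $r$ off against the auxiliary function $h(\tau):=\rho\,\tau(p)$ on $\mathrm T^+(A)$, where $\rho$ is a candidate value for the radius of comparison of $B$. The key observation is that, because $p$ is a projection, $\tau(p)$ depends only on $[p]_0\in\Kzero(A)$, so $h$ is automatically continuous, affine, and factors through $\mathrm T^+(A)\to\mathrm T^+(\Kzero(A))$; hence it lies in the class of functions to which the minimality clause (3) applies. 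I will also use two standard facts: first, since $A$ is simple and $p\ne 0$, every nonzero $\tau\in\mathrm T^+(A)$ satisfies $\tau(p)>0$; second, normalized traces on $B$ correspond bijectively, by restriction and extension, to $\{\tau\in\mathrm T^+(A):\tau(p)=1\}$, and Cuntz subequivalence and dimension functions are preserved by the stabilization isomorphism $B\otimes\mathcal K\cong A\otimes\mathcal K$.

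For the upper bound $\mathrm{rc}(B)\le M$, I would fix $\rho>M$ and take $a,b\in(B\otimes\mathcal K)^+$ satisfying $\mathrm d_\tau(a)+\rho<\mathrm d_\tau(b)$ for every normalized trace $\tau$ on $B$. Unnormalizing, this becomes $\mathrm d_{\tau_0}(a)+\rho\,\tau_0(p)<\mathrm d_{\tau_0}(b)$ for all $\tau_0\in\mathrm T^+(A)\setminus\{0\}$. Since $r$ is positively homogeneous on rays (an element of $\mathrm{Aff}(\mathrm T^+(A))$ vanishing at $0$), one has $r(\tau_0)/\tau_0(p)\le M<\rho$ wherever $\tau_0(p)>0$, so $r(\tau_0)<h(\tau_0)=\rho\,\tau_0(p)$ there, and the hypothesis upgrades to $\mathrm d_{\tau_0}(a)+r(\tau_0)<\mathrm d_{\tau_0}(b)$ on $\mathrm T^+(A)\setminus\{0\}$. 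Property (2) of $r$ then yields $a\precsim b$, giving $\mathrm{rc}(B)\le\rho$ for every $\rho>M$.

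For the lower bound $\mathrm{rc}(B)\ge M$, I would fix $\rho<M$ and choose $\tau_*\in\mathrm T^+(A)$ with $\tau_*(p)=1$ and $r(\tau_*)=M$, so that $h(\tau_*)=\rho<M=r(\tau_*)$. The contrapositive of the minimality clause (3), applied to $h$, produces $a,b\in(A\otimes\mathcal K)^+$ with $\mathrm d_{\tau_0}(a)+h(\tau_0)<\mathrm d_{\tau_0}(b)$ for every nonzero $\tau_0\in\mathrm T^+(A)$, yet $a\not\precsim b$. Transporting $a,b$ through the isomorphism $A\otimes\mathcal K\cong B\otimes\mathcal K$, and specializing the inequality to those $\tau_0$ with $\tau_0(p)=1$, yields a counterexample to the $\rho$-comparison for $B$, so $\mathrm{rc}(B)>\rho$ for every $\rho<M$. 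The main step requiring care, and the main obstacle I anticipate, is the bookkeeping around the stabilization isomorphism and the two-way correspondence between traces on $B$ and elements of $\mathrm T^+(A)$ with $\tau(p)=1$; once those conventions are nailed down, both halves reduce to the same sandwich argument between $r$ and $h$.
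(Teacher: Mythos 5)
Your proposal is correct and follows essentially the same route as the paper: the auxiliary function $h(\tau)=\rho\,\tau(p)$ is exactly the paper's extension of the constant $\rho$ from the base $C'_p=\{\tau\in\mathrm{T}^+(\Kzero(A)):\tau([p])=1\}$ to a homogeneous affine function factoring through $\mathrm{T}^+(A)\to\mathrm{T}^+(\Kzero(A))$, and both halves are the same sandwich between $r$ and this extension, using property (2) for the upper bound and the contrapositive of the minimality clause (3) for the lower bound. The bookkeeping you flag (traces of $p(A\otimes\mathcal K)p$ versus the section $\{\tau(p)=1\}$, and fullness of $p$ from simplicity) is handled in the paper in exactly the same implicit way.
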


\begin{proof}
Since $p$ is full, the set $$C_p = \{\tau \in \mathrm{T}^+(A): \tau(p) = 1 \} \cong \tr(p(A\otimes \mathcal K)p)$$ is a base for $\mathrm{T}^+(A)$,
and the set 
$$C'_p = \{\tau \in \mathrm{T}^+(\Kzero(A)): \tau([p]) = 1 \} \cong \mathrm{S}_{[p]}(\Kzero(p(A\otimes \mathcal K)p))$$
is a base for $\mathrm{T}^+(\Kzero(A))$. %Since $A$ is exact, the canonical map $C_p \to C'_p$ is surjective.

Let $$s < \max\{r(\tau): \tau(p) = 1,\ \tau \in \mathrm{T}^+(A)\} = \max\{r(\tau): \tau([p]_0) = 1,\ \tau \in \mathrm{T}^+(\Kzero(A))\}  .$$ Regard $s$ as a constant affine function on $C'_p$. Since $C'_p$ is a base for $\mathrm{T}^+(\Kzero(A))$, the affine function $s$ can be extended to a continuous affine function on $\mathrm{T}^+(\Kzero(A))$, $0$ at $0$, and hence a continuous affine function on $\mathrm{T}^+(A)$ (factoring through $\mathrm{T}^+(A) \to \mathrm{T}^+(\Kzero(A))$). Still denote it by $s$. 

Then there is $\tau_0 \in \mathrm{T}^+(A)$ such that $$s(\tau_0) < r(\tau_0).$$ 
By Condition (3), the affine function $s$ does not have the comparison property (\ref{comp-property}), and hence there are positive elements $a, b \in A\otimes \mathcal K$ such that
$$\mathrm{d}_\tau(a) + s(\tau) < \mathrm{d}_\tau(b),\quad \tau \in \mathrm{T}^+(A),$$
but $a$ is not Cuntz subequivalent to $b$. Restricting to $C_p$, one has
$$\mathrm{d}_\tau(a) + s < \mathrm{d}_\tau(b),\quad \tau \in C_p.$$
Since $s$ is arbitrary, 
$$ \max\{r(\tau): \tau(p) = 1,\ \tau \in \mathrm{T}^+(A)\} \leq \mathrm{rc}(p(A\otimes \mathcal K)p).$$

Let us show the reverse inequality. Assume $a, b \in A \otimes \mathcal K$ are positive elements such that
$$\mathrm{d}_\tau(a) +  \max\{r(\tau): \tau(p) = 1,\ \tau \in \mathrm{T}^+(A)\} < \mathrm{d}_\tau(b),\quad \tau \in C_p.$$ 
Then
$$\mathrm{d}_\tau(a) +  r(\tau)  \leq \mathrm{d}_\tau(a) +  \max\{r(\tau): \tau(p) = 1,\ \tau \in \mathrm{T}^+(A)\} < \mathrm{d}_\tau(b),\quad \tau \in C_p.$$ Since $C_p$ is a base for $\mathrm{T}^+(A)$, one has
$$\mathrm{d}_\tau(a) +  r(\tau) < \mathrm{d}_\tau(b),\quad \tau \in \mathrm{T}^+(A).$$ By the comparison property of $r$, one has $a \precsim b$. Therefore,
$$ \max\{r(\tau): \tau(p) = 1,\ \tau \in \mathrm{T}^+(A)\} \geq \mathrm{rc}(p(A\otimes \mathcal K)p),$$
as desired.
\end{proof}

\begin{cor}\label{rc-V}
Let $A$ be an AF-Villadsen algebra which satisfies \eqref{rapid-growth-cond}, with the seed space finite dimensional, solid, and connected. Then, for any projection $p \in A\otimes \mathcal K$, one has
$$\mathrm{rc}(p(A\otimes \mathcal K)p) = \max\{r^{(0)}_\infty(\tau): \tau(p) = 1,\ \tau \in \mathrm{T}^+(A)\}.$$
\end{cor}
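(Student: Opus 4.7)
The plan is to derive this corollary as a direct application of Theorem \ref{rc-general}, by taking $r = r^{(0)}_\infty$ and verifying the three hypotheses (1)--(3) of that theorem in our setting.

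First I would note that $r^{(0)}_\infty$ is by construction a continuous affine function on $\mathrm{S}_u(\Kzero(A))$ (condition \eqref{rapid-growth-cond}), and hence, after extending to the cone $\mathrm{T}^+(\Kzero(A)) = \Real^+ \mathrm{S}_u(\Kzero(A))$ and pulling back along the natural map $\mathrm{T}^+(A) \to \mathrm{T}^+(\Kzero(A))$, it yields an element of $\mathrm{Aff}(\mathrm{T}^+(A))$ which factors through $\mathrm{T}^+(\Kzero(A))$. This gives hypothesis (1) of Theorem \ref{rc-general}.

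Next, hypothesis (2)---the comparison property \eqref{gap-function-0}---is precisely the content of Theorem \ref{comparison-property-0} (the reformulation of Theorem \ref{comparison-property} in terms of $\mathrm{T}^+(A)$). Hypothesis (3)---minimality of $r^{(0)}_\infty$ among continuous affine functions factoring through $\mathrm{T}^+(\Kzero(A))$ and satisfying \eqref{gap-function-0}---is the second assertion of Theorem \ref{comparison-property-0}, which requires the assumption that the seed space $X$ be finite dimensional, connected, and solid. These are exactly the standing hypotheses of the corollary.

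With all three hypotheses of Theorem \ref{rc-general} verified for $r = r^{(0)}_\infty$, the conclusion
$$\mathrm{rc}(p(A\otimes \mathcal K)p) = \max\{r^{(0)}_\infty(\tau): \tau(p) = 1,\ \tau \in \mathrm{T}^+(A)\}$$
follows immediately. There is essentially no obstacle in the proof itself---the work has already been carried out in Theorems \ref{comparison-property} and \ref{rc-general}; the only thing to watch is that one consistently uses the cone formulation (Theorem \ref{comparison-property-0}) rather than the simplex formulation, so that the base-change argument in the proof of Theorem \ref{rc-general} applies to arbitrary full projections $p$, not just $p = 1_A$.
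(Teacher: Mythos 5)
Your proposal is correct and matches the paper's own proof, which likewise verifies the three hypotheses of Theorem \ref{rc-general} for $r = r^{(0)}_\infty$ via Theorem \ref{comparison-property} (in its cone formulation, Theorem \ref{comparison-property-0}) and then applies Theorem \ref{rc-general}. The only difference is that you spell out the verification in more detail than the paper's one-line argument.
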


\begin{proof}
It follows from Theorem \ref{comparison-property} that the function $r_\infty^{(0)}$ satisfies the assumptions of Theorem \ref{rc-general}, and then the corollary follows from Theorem \ref{rc-general}.
\end{proof}

\begin{example}[\cite{HP-Villadsen}]\label{exm-HP}

Consider the Villadsen algebra of Example \ref{HP-example}. Assume that $$ \lim_{i\to\infty} \frac{u_{i, 1}}{\tilde{u}_{i, 1}} \neq \lim_{i\to\infty} \frac{u_{i, 2}}{\tilde{u}_{i, 2}},$$
and hence that 
$$r^{(0)}_\infty(\tau_1) \neq r^{(0)}_\infty(\tau_2),$$
where $\tau_1$ and $\tau_2$ are the extreme points of $\mathrm{S}_u(\Kzero(A)) \cong [0, 1]$ (see Example \ref{HP-example}).
Then, by Corollary \ref{aut-fix}, there is no automorphism $\sigma: A \to A$ such that $\tau_1 \circ ([\sigma]_0)^* = \tau_2$. That is, there is no automorphism which flips $\mathrm{S}_u(\Kzero(A)) \cong  [0, 1]$.

Let $p \in A\otimes \mathcal K$ be a projection, and let us calculate $\mathrm{rc}(p(A \otimes\mathcal K)p)$. Without loss of generality, one may assume that $p \in A_i$ for some $i \in \mathbb N$ (as, if $p$ is unitarily equivalent to $q$, then the hereditary sub-C*-algebras generated by $p$ and $q$ are isomorphic).  

Note that $$\tr^+(A) = \{\alpha \tau_1 + \beta \tau_2: \alpha, \beta \in [0, +\infty)\},$$ and then consider the section
$$\{\tau \in \tr^+(A): \tau(p) = 1\}.$$
Write $\tau = \alpha \tau_1 + \beta \tau_2$; and then 
$$ 1 = (\alpha \tau_1 + \beta \tau_2)(p) = \alpha\tau_1(p) + \beta \tau_2(p).$$ Since $\beta \geq 0$, a simple calculation shows that
$$ 0 \leq \alpha \leq \frac{1}{\tau_1(p)}. $$
Then, by Corollary \ref{rc-V}, 
 \begin{eqnarray*} 
\mathrm{rc}(p(A \otimes \mathcal K)p)& = & \max\{ r^{(0)}_\infty( \alpha \tau_1 + \beta \tau_2 ): (\alpha \tau_1 + \beta \tau_2)(p) = 1 \} \\
& = & \max \{\alpha r^{(0)}_\infty(\tau_1) + \frac{1}{\tau_2(p)}(1 - \alpha \tau_1(p)) r^{(0)}_\infty(\tau_2) : 0 \leq \alpha \leq \frac{1}{\tau_1(p)} \} \\
& = &  \max \{\frac{r^{(0)}_\infty(\tau_2)}{\tau_2(p)} + (r^{(0)}_\infty(\tau_1) - \frac{\tau_1(p)}{\tau_2(p)} r^{(0)}_\infty(\tau_2))\alpha: 0 \leq \alpha \leq \frac{1}{\tau_1(p)}  \} \\
%& = & \left\{ \begin{array}{ll} 
%\frac{r_\infty(\tau_2)}{\tau_2(p)}, & \textrm{if $r_\infty(\tau_1) \tau_2(p) \leq  {\tau_1(p)} r_\infty(\tau_2)$} \\
%\frac{r_\infty(\tau_1)}{\tau_1(p)}, & \textrm{otherwise}.
% \end{array} \right.\\
 & = & \max\{\frac{r^{(0)}_\infty(\tau_1)}{\tau_1(p)}, \frac{r^{(0)}_\infty(\tau_2)}{\tau_2(p)} \}.
 \end{eqnarray*}

\end{example}

\section{The isomorphism theorem}

In this section, let us show that the AF-Villadsen algebras with a given (finite dimensional, K-contractible, and solid) seed space are classified by their $\Kzero$-group together with the function $r^{(0)}_\infty$.

\begin{prop}[cf.~Lemma 7.4 of \cite{ELN-Vill}]\label{pre-intertwining}
Let $X$ be a K-contractible (i.e., $\mathrm{K}_*(\mathrm{C}(X)) = \mathrm{K}_*(\Comp)$, $*=0, 1$) metrizable compact space such that $0 < \mathrm{dim}(X) < \infty$, and let $A(X, G^{(A)}, \mathcal E^{(A)})$ and $B(X, G^{(B)}, \mathcal E^{(B)})$ be AF-Villadsen algebras with seed space $X$ satisfying \eqref{rapid-growth-cond} (and therefore \eqref{rapid-growth-cond-2}) such  that $$ (\Kzero(A), [1_A]_0,  r_\infty^{(0)}(A)) \cong (\Kzero(B), [1_B]_0, r_\infty^{(0)}{(B)}). $$ Let $\delta_1 > \delta_2 > \cdots$ be a sequence of strictly positive numbers such that $\sum_{i=1}^\infty \delta_i < 1$. Then, on  telescoping, there is a diagram
\begin{equation}
\xymatrix{
A_1 \ar[r]^{\phi_1} \ar[d]_{\rho_1} & A_2 \ar[r]^{\phi_2} \ar[d]_{\rho_2} & \cdots & \\
B_1 \ar[r]^{\psi_1} \ar[ur]^{\eta_1} & B_2 \ar[r]^{\psi_1} & \cdots & 
}
\end{equation}
such that each of $\phi_i, \psi_i, \rho_i, \kappa_i$, $i=1, 2, ...$, consists of independent coordinate projections and point evaluations, i.e., restricted to each direct summand of the domain, it has the form  
$$ (f_1, f_2, ..., f_s) \mapsto \mathrm{diag}\{ f_1 \circ P_1, ..., f_s \circ P_s, \textrm{point evaluations}\},$$
where $P_1, ..., P_s$ are mutually disjoint sets of coordinate projections,
and
for each $i=1, 2, ...$, there are decompositions
$$\phi_i = \mathrm{diag}\{P_{A, i}, R'_{A, i}, \Theta_{A, i}\}, \quad \psi_i = \mathrm{diag}\{P_{B, i}, R'_{B, i}, \Theta_{B, i}\},  $$
and
$$ \eta_i\circ\rho_i =  \mathrm{diag}\{P_{A, i}, R''_{A, i}, \Theta_{A, i}\},\quad  \rho_{i+1}\circ\eta_i =  \mathrm{diag}\{P_{B, i}, R''_{B, i}, \Theta_{B, i}\},$$
where $P_{A, i}: A_i \to A_{i+1}$ and $P_{B, i}: B_i \to B_{i+1}$ consist of coordinate projections, and $\Theta_{A, i}: A_i \to A_{i+1}$ and $\Theta_{B, i}: B_i \to B_{i+1}$ consist of point evaluations, 
such that, for each $i=1, 2, ...$, 
%$$ 
%\abs{
%\tau(\phi_i(h)) - 
%\eta_i(\rho_i(h))
%} < \delta_i, \quad h \in A_i,\ \norm{h} \leq 1,\ \tau \in \tr(A_{i+1}),
%$$
%
%$$ 
%\abs{
%\tau(\psi_i(h)) - 
%\rho_{i+1}(\eta_i(h))
%} < \delta_i, \quad h \in B_i,\ \norm{h} \leq 1,\ \tau \in \tr(B_{i+1}),
%$$
%
%$$
%\mathrm{rank}(\Theta'_{A, i}) = \mathrm{rank}(\Theta''_{A, i}) \quad \mathrm{and} \quad \quad  \frac{\mathrm{rank}(R'_{A, i})}{\mathrm{rank}(\Theta_{A, i})} = \frac{\mathrm{rank}(R''_{A, i})}{\mathrm{rank}(\Theta_{A, i})} < \delta_i
%$$
%and
%$$
%\mathrm{rank}(\Theta'_{B, i}) = \mathrm{rank}(\Theta''_{B, i}) \quad \mathrm{and} \quad \quad  \frac{\mathrm{rank}(R'_{B, i})}{\mathrm{rank}(\Theta_{B, i})} = \frac{\mathrm{rank}(R''_{B, i})}{\mathrm{rank}(\Theta_{B, i})} < \delta_i
%$$
$$ \frac{\mathrm{rank}_j(R'_{A, i}(1_{A_i}))}{\mathrm{rank}_j(\Theta_{A, i}(1_{A_i}))} = \frac{\mathrm{rank}_j(R''_{A, i}(1_{A_i}))}{\mathrm{rank}_j(\Theta_{A, i}(1_{A_i}))} < \delta_i,\quad j=1, ..., s_{i+1}^{(A)},
$$
and
$$
\frac{\mathrm{rank}_j(R'_{B, i}(1_{B_i}))}{\mathrm{rank}_j(\Theta_{B, i}(1_{B_i}))} = \frac{\mathrm{rank}_j(R''_{B, i}(1_{B_i}))}{\mathrm{rank}_j(\Theta_{B, i}(1_{B_i}))} < \delta_i, \quad j=1, ..., s_{i+1}^{(B)}.
 $$

In particular, $\tr(A) \cong \tr(B)$, and in a way compatible with the isomorphism of $\Kzero$-groups.

\end{prop}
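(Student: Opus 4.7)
My approach is Elliott's approximate intertwining, done first at the level of the ordered dimension group $(\Kzero, [1]_0)$ together with the continuous affine function $r_\infty^{(0)}$, and then lifted to $*$-homomorphisms that respect the coordinate-projection / point-evaluation decomposition.

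First, since the hypothesis gives an isomorphism $(\Kzero(A), [1_A]_0) \cong (\Kzero(B), [1_B]_0)$ of dimension groups with order unit, presented as inductive limits of $(\Int^{s_i^{(A)}}, [\phi_i])$ and $(\Int^{s_i^{(B)}}, [\psi_i])$, a standard Bratteli/Elliott one-sided intertwining produces, after an initial telescoping, non-negative integer matrices $\alpha_i : \Int^{s_i^{(A)}} \to \Int^{s_i^{(B)}}$ and $\beta_i : \Int^{s_i^{(B)}} \to \Int^{s_{i+1}^{(A)}}$ satisfying $\beta_i\alpha_i = [\phi_i]$, $\alpha_{i+1}\beta_i = [\psi_i]$, and preserving order units. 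I would then split each of $\alpha_i$ and $\beta_i$ as a coordinate-projection part plus a point-evaluation part, chosen so that the coordinate-projection part of the composition $\beta_i\alpha_i$ matches, up to small coordinate-projection remainders $R'_{A,i}, R''_{A,i}$, the coordinate-projection part of $[\phi_i]$. Preservation of $r_\infty^{(0)}$ under the isomorphism is precisely the statement, via the formula $r_n^{(0)} = \tfrac{\dim X}{2}\cdot u_n/\tilde u_n$, that the limiting ratio of coordinate-projection multiplicity to total multiplicity agrees on the two sides, and this is what makes such a consistent splitting possible. Combined with the rapid-growth condition \eqref{rapid-growth-cond-2}, a further telescoping yields the bound $\mathrm{rank}_j(R'_{A,i})/\mathrm{rank}_j(\Theta_{A,i}) < \delta_i$; the equality of ranks $\mathrm{rank}_j(R'_{A,i}) = \mathrm{rank}_j(R''_{A,i})$ is then an algebraic consequence of the matrix identity $\beta_i\alpha_i = [\phi_i]$ once the common sub-block $[P_{A,i}]$ has been fixed. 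The analogous construction at the $B$-end proceeds identically.

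Finally I would lift the multiplicity data to genuine $*$-homomorphisms $\rho_i : A_i \to B_i$ and $\eta_i : B_i \to A_{i+1}$ by realizing each coordinate-projection multiplicity as a direct sum of coordinate projections on $X^{u_{n,i}}$ and each point-evaluation multiplicity as a direct sum of point evaluations at selected points. Using the proposition in Section \ref{AF-construction} on independence from the partition choice, one can arrange the coordinate subsets and the point sets so that the decompositions $\phi_i = \mathrm{diag}\{P_{A,i}, R'_{A,i}, \Theta_{A,i}\}$ and $\eta_i\circ\rho_i = \mathrm{diag}\{P_{A,i}, R''_{A,i}, \Theta_{A,i}\}$ share literally the same sub-blocks $P_{A,i}$ and $\Theta_{A,i}$, and not merely up to permutation. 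The main technical obstacle is precisely this bookkeeping: arranging the choices in $\rho_i$ and $\eta_i$ so that their composition reuses exactly the coordinates and points already singled out inside $\phi_i$, putting all the discrepancy into the small remainder $R''_{A,i}$ controlled by \eqref{rapid-growth-cond-2}; this is where the partition-independence proposition is crucial. The concluding isomorphism $\tr(A) \cong \tr(B)$ compatibly with the $\Kzero$-isomorphism is then immediate, since each tracial simplex is the inverse limit of the finite-dimensional simplexes $\tr(A_i)$ (resp.\ $\tr(B_i)$) under the conjugate connecting maps appearing in the diagram.
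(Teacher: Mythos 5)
Your outline follows the same two-stage strategy as the paper: first intertwine the Bratteli diagrams at the level of $(\Kzero,[1]_0)$, then lift the multiplicity data to $*$-homomorphisms built from coordinate projections and point evaluations, with $r_\infty^{(0)}$ governing how much of each multiplicity can be realized by coordinate projections and \eqref{rapid-growth-cond-2} making the mismatched remainders small. Two steps, however, are asserted rather than established, and both carry real content. First, the ``room'' estimate: a map $A_{i'}\to B_{i''}$ that uses $\delta_{i,j}$ disjoint coordinate projections out of the $i$-th summand requires $\sum_i \delta_{i,j}\,u^{(A)}_{i',i}\le u^{(B)}_{i'',j}$, i.e.\ the coordinates must physically fit. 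You correctly identify that the hypothesis $r_\infty^{(0)}(A)\cong r_\infty^{(0)}(B)$ is what makes this possible, but the actual argument is a quantitative chain (the paper's \eqref{close-ratio} feeding into the computation that yields \eqref{room-4-dim}) showing one can take $\delta_{i,j}/m_{i,j}$ in the window $(1-2\delta',1-\delta')$ and still satisfy the coordinate-count inequality; without writing this down the splitting ``being possible'' is not justified. Second, the target inequality has $\mathrm{rank}_j(\Theta_{A,i}(1_{A_i}))$ in the \emph{denominator}, and \eqref{rapid-growth-cond-2} together with near-commutation of the multiplicity matrices only controls the \emph{numerator} relative to the total rank $\tilde u_{j}$. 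You also need a lower bound $\tau(\Theta(1))>\Delta_i$ on the trace of the point-evaluation block (the paper's \eqref{lower-bd}, which comes from the construction/simplicity), and it is this $\Delta_i$ that dictates how small the auxiliary tolerance $\delta_i'$ must be chosen. Your proposal never mentions this lower bound, so as written the final ratio bound does not follow.

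A smaller remark on what you call the main technical obstacle: you propose to use the partition-independence proposition to arrange that $\phi_i$ and $\eta_i\circ\rho_i$ share literally the same point-evaluation block. The paper sidesteps this entirely by factoring both composites through the first leg $\phi_{1,i_1'}=P_1\oplus\Theta$ and designating $\Theta_{A,i}:=(P\oplus R_0)\circ\Theta$: since $\Theta$ is a point evaluation, post-composing it with any two maps of equal total multiplicity gives identical point-evaluation blocks, so $(P\oplus R_0)\circ\Theta=(P\oplus R_1)\circ\Theta$ holds automatically and no matching of evaluation points chosen inside $\rho_i$ and $\eta_i$ is needed; those stray point evaluations are simply absorbed into the uncontrolled small remainders $R'_{A,i}$, $R''_{A,i}$. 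Adopting this device would remove the bookkeeping difficulty you flag; the rank equality $\mathrm{rank}_j(R'_{A,i}(1))=\mathrm{rank}_j(R''_{A,i}(1))$ then follows, as you say, from the equality of total $\Kzero$-multiplicities once the common blocks are fixed.
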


\begin{proof}
Since $X$ is K-contractible, one has  
$$ (\Kzero(G^{(A)}), \Kzero^+(G^{(A)}), [\tilde{u}^{(A)}]) \cong  (\Kzero(A), \Kzero^+(A), [1_{A}]_0)$$
and
$$ (\Kzero(G^{(B)}), \Kzero^+(G^{(B)}), [\tilde{u}^{(B)}]) \cong  (\Kzero(B), \Kzero^+(B), [1_{B}]_0).$$
Since $(\Kzero(A), [1_A]_0) \cong (\Kzero(B), [1_B]_0)$, there is an isomorphism $$ \kappa_\infty:  (\Kzero(G^{(A)}), \Kzero^+(G^{(A)}), [\tilde{u}^{(A)}]) \cong (\Kzero(G^{(B)}), \Kzero^+(G^{(B)}), [\tilde{u}^{(B)}]).$$ Therefore, upon a telescoping, there is a commutative diagram
\begin{equation}\label{comm-diag-K0-1}
\xymatrix{
G_1^{(A)} \ar[r] \ar[d]_{\kappa^{[A, B]}_1} & G_2^{(A)} \ar[r] \ar[d] & \cdots \ar[r] & \Kzero(G^{(A)}) \ar[d] \\
G_1^{(B)} \ar[r] \ar[ur]_-{\kappa^{[B, A]}_1}& G_2^{(B)} \ar[r] \ar[ur] & \cdots \ar[r] & \Kzero(G^{(B)}) \ar[u]
}
\end{equation}
which induces the isomorphism $\kappa_\infty$.

Consider the inductive sequences
\begin{displaymath}
\xymatrix{
A_1 \ar[r] & A_2 \ar[r] & \cdots \ar[r] & A, \\
B_1 \ar[r] & B_2 \ar[r] & \cdots \ar[r] & B.
}
\end{displaymath}

Write $r_i^{(0)}{(A)} \in \Real^{s_i^{(A)}}$ and $r_i^{(0)}{(B)} \in \Real^{s^{(B)}_i}$, $i=1, 2, ...$, for the affine functions at stage $i$ which converge (uniformly---see \eqref{rapid-growth-cond}) to $r_\infty^{(0)}{(A)}$ and $r_\infty^{(0)}{(B)}$ respectively.

Consider $\delta_1$ and consider $A_1$. It follows from the construction of $A$ that there is $\Delta_1>0$ such that for each $i \geq 1$, if one decomposes $\phi_{1, i}$ as $$\phi_{1, i} = P \oplus \Theta: A_1 \to A_i,$$ where $P: A_1 \to A_i$ consists of coordinate projections and $\Theta: A_1 \to A_i$ consists of point evaluations, then 
\begin{equation}\label{lower-bd} 
\tau(\Theta(1_{A_1})) > \Delta_1,\quad \tau \in\tr(A_i).
\end{equation}

Choose $\delta_1'$ sufficiently small that $$\frac{2\delta_1'  + (1-(1- 2 \delta_1')^3)}{\Delta_1} < \delta_1. $$ 

Recall from \eqref{rapid-growth-cond} that $(r_i^{(0)}{(A)})$ and $(r_i^{(0)}{(B)})$ converge decreasingly and uniformly to $r_\infty^{(0)}{(A)}$ and $r_\infty^{(0)}{(B)}$ respectively. %there is $i'_1$ such that 
%$$ \abs{r_i^{(A)} - r_\infty^{(A)}} < \delta'_1 \quad \mathrm{and} \quad \abs{r_i^{(B)} - r_\infty^{(B)}} < \delta'_1, \quad i \geq i'_1.$$
Since $r_\infty^{(0)}{(A)} = r_{\infty}^{(0)}{(B)}$ under the isomorphism induced by $\kappa_\infty$, and since $r_\infty^{(0)}{(A)}$ and $r_\infty^{(0)}{(B)}$ are strictly positive, there is $i'_1$ %can be chosen to be sufficiently large 
such that the differences
$$\norm{r_i^{(0)}{(A)} - r_\infty^{(0)}{(A)}}_\infty \quad \mathrm{and} \quad \norm{r_i^{(0)}{(B)} - r_\infty^{(0)}{(B)}}_\infty,\quad i \geq i_1', $$
are small enough that
%$$ 2\delta_1 > (\kappa^{A, B}_{i, j})^*(r_i^{(A)}) - r_j^{(B)} > 0$$
\begin{equation}\label{close-ratio}
(1-\delta'_1)(\kappa^{A, B}_{i, j})^*(r_i^{(0)}{(A)}) < r_j^{(0)}{(B)},\quad j>i \geq i_1'. 
\end{equation}

By \eqref{rapid-growth-cond-2}, $i_1'$ is large enough that
\begin{equation}\label{gr-con-A}
\sum_{i=1}^{s^{(A)}_{i_1'}}([\phi_{i_1', k}]_{i, j} -  (D[\phi_{i'_1, k}])_{i, j})\tilde{u}^{(A)}_{i_1', i} < \delta_1' \sum_{i=1}^{s^{(A)}_{i_1'}} [\phi_{i_1', k}]_{i, j}\tilde{u}^{(A)}_{i_1', i} , \quad j=1, ..., s_{k}^{(A)},\ k> i_1',
\end{equation}
where $D(\cdot)$ denotes the multiplicity matrix of the coordinate projection component.

%\vskip 1in
%
%Define the map $\rho_1: A_1 \to B_1$ by
%\begin{eqnarray*}
%\rho_1(f_1, ..., f_{s^{(A)}_1}) & \mapsto &  ( \mathrm{diag}\{f_1\circ P_{1, 1}, ..., f_{s_1} \circ P_{s_1, 1}, \textrm{point evaluations}\}, ..., \\
%& & \mathrm{diag}\{f_1\circ P_{1, s_2}, ..., f_{s_1} \circ P_{s_1, s_2}, \textrm{point evaluations}\}),
%\end{eqnarray*}
%where each
%$$P^{A, B}_{i, j}: X^{u^{(B)}_{2, j}} \to X^{u^{(A)}_{1, i}},\quad i=1, ..., s^{(A)}_1,\ j=1, ..., s^{(B)}$$
%consists of $$ \delta^{A, B}_{i, j} = ?$$ disjoint coordinate projections, and the $\Kzero$-multiplicity of $\rho_1$ is 
%$$
%[\rho_1]_0 = \kappa_1^{[A, B]}.
%$$
%Denote by $D^{A, B}_1$ the coordinate projection matrix $(\delta^{A, B}_{i, j})_{i, j}$ and denote by $M_1^{A, B}$ the multiplicity matrix of $\kappa_1^{[A, B]}$.
%
%\vskip 1in

Fix $i_1'$. Since the ordered groups $\Kzero(G^{(A)}) \cong \Kzero(G^{(B)})$ are simple, if $i_1'' > i_1'$ is sufficiently large and if $$ (m_{i, j}), \quad i=1, ..., s_{i_1'}^{(A)},\ j=1, ..., s_{i_1''}^{(B)}, $$ denotes the multiplicity matrix of $\kappa_{i_1', i_1''}^{A, B}$, 
then there exist positive integers
$$\delta_{i, j}^{A, B}, \quad i=1, ..., s_{i_1'}^{(A)},\ j=1, ..., s_{i_1''}^{(B)}, $$
satisfying
\begin{equation}\label{app-multi-A}
1-2\delta'_1<\frac{\delta_{i, j}^{A, B}}{m_{i, j}^{A, B}} < 1 - \delta'_1 ,\quad i=1, ..., s_{i_1'}^{(A)},\ j=1, ..., s_{i_1''}^{(B)}.
\end{equation}
Recall that 
$$r_i^{(0)}{(A)} = \frac{1}{2}\mathrm{dim}(X)(\frac{u^{(A)}_{i, j}}{\tilde{u}^{(A)}_{i, j}})_j, \quad  r_i^{(0)}{(B)} = \frac{1}{2}\mathrm{dim}(X)(\frac{u^{(B)}_{i, j}}{\tilde{u}^{(B)}_{i, j}})_j, $$
and $0< \mathrm{dim}(X) < \infty$. Then,  by \eqref{close-ratio},
one has, for each $j = 1, ..., s_{i_1''}^{(B)}$, 
\begin{eqnarray*}
\frac{1}{\tilde{u}_{i_1'', j}^{(B)}}\sum_{i=1}^{s_{i'_1}^{(A)}} u_{i_1', i}^{(A)} \delta_{i, j}^{A, B}  & = & \sum_{i=1}^{s_{i_1'}^{(A)}} (\frac{u_{i_1', i}^{(A)}}{\tilde{u}_{i_1', i}^{(A)} })
( \tilde{u}_{i_1', i}^{(A)} m_{i, j}^{A, B}\frac{1}{\tilde{u}_{i_1'', j}^{(B)}}) 
 (\frac{\delta_{i, j}^{A, B}}{m_{i, j}^{A, B}}) \\
& <  & \sum_{i=1}^{s_{i_1'}^{(A)}} (\frac{u_{i_1', i}^{(A)}}{\tilde{u}_{i_1', i}^{(A)} })
( \tilde{u}_{i_1', i}^{(A)} m_{i, j}^{A, B}\frac{1}{\tilde{u}_{i_1'', j}^{(B)}}) 
 (1-\delta_1') \\  
 & = & (1- \delta_1')\sum_{i=1}^{s_{i_1'}^{(A)}} (\frac{u_{i_1', i}^{(A)}}{\tilde{u}_{i_1', i}^{(A)} })
( \tilde{u}_{i_1', i}^{(A)} m_{i, j}^{A, B}\frac{1}{\tilde{u}_{i_1'', j}^{(B)}}) \\
& = & (1- \delta_1') \frac{2}{\mathrm{dim}(X)}((\kappa_{i_1', i_1''}^{A, B})^* (r_{i_1''}^{(A)}))_j \\
& < & \frac{2}{\mathrm{dim}(X)} (r_{i_1''}^{(0)}{(B)})_j =  \frac{u_{i_1'', j}^{(B)}}{\tilde{u}_{i_1'', j}^{(B)}}.
\end{eqnarray*}
Therefore,
\begin{equation}\label{room-4-dim}
\sum_{i=1}^{s_{i_1'}^{(A)}} u^{(A)}_{i_1', i} \delta_{i, j}^{A, B} < u_{i_1'', j}^{(B)},\quad j=1, ..., s_{i_1''}^{(B)}.
\end{equation}

By \eqref{rapid-growth-cond-2}, $i_1''$ can be chosen even farther out so that  
%for any $i \geq i_1''$, 
%\begin{equation}\label{close-pert-1}
%([\psi_{i''_1, i}] - D[\psi_{i_1'', i}])(\mathbf 1_{s_{i_1''}^{(B)}}) < \frac{\delta_1'}{M_1}[\psi_{i_1'', i}](\mathbf 1_{s_{i_1''}^{(B)}}), \quad i > i_1''.  
%\end{equation}
%
%and
\begin{equation*}
\sum_{i=1}^{s^{(B)}_{i_1'}}([\psi_{i_1'', k}]_{i, j} -  (D[\psi_{i''_1, k}])_{i, j})\tilde{u}^{(B)}_{i_2', i} < \delta_1' \sum_{i=1}^{s^{(B)}_{i_1'}} [\psi_{i_1'', k}]_{i, j}\tilde{u}^{(B)}_{i_1', i} , \quad j=1, ..., s_{k}^{(B)},\ k> i_1'',
\end{equation*}
which implies
\begin{equation}\label{gr-con-B}
\sum_{i=1}^{s_{i_1''}^{B}}(D[\psi_{i''_1, k}])_{i, j})\tilde{u}^{(B)}_{i_2', i} > (1-\delta_1') \sum_{i=1}^{s_{i_1''}^{B}}  [\psi_{i_1'', k}]_{i, j}\tilde{u}^{(B)}_{i_1', i} , \quad j=1, ..., s_{k}^{(B)},\ k> i_1''.
\end{equation}

Then, define the map $$\rho'_1: A_{i_1'} \to B_{i_1''}$$ of the form  
\begin{eqnarray*}
\rho'_1(f_1, ..., f_{s^{(A)}_{i_1'}}) & \mapsto &  ( \mathrm{diag}\{f_1\circ P^{A, B}_{1, 1}, ..., f_{s_{i_1''}^{(A)}} \circ P^{A, B}_{s_{i_1'}^{(A)}, 1}, \textrm{point evaluations}\}, ..., \\
& & \mathrm{diag}\{f_1\circ P^{A, B}_{1, s_{i_1''}^{(B)}}, ..., f_{s_{i_1'}^{(A)}} \circ P^{A, B}_{s_{i_1'}^{(A)}, s_{i_1''}^{(B)}}, \textrm{point evaluations}\}),
\end{eqnarray*}
where each map 
$$P^{A, B}_{i, j}: X^{u^{(B)}_{i_1'', j}} \to X^{u^{(A)}_{i_1', i}},\quad i=1, ..., s^{(A)}_{i_1'},\ j=1, ..., s^{(B)}_{i_1''}, $$
consists of $ \delta^{A, B}_{i, j}$ disjoint coordinate projections, and the $\Kzero$-multiplicity of $\rho'_1$ is $\kappa_{i_1', i_1''}^{[A, B]}.$ Note that this is well defined by \eqref{room-4-dim}. Then, define $$\rho_1 = \rho_1' \circ \phi_{1, i_1'}: A_1 \to B_{i_1''}.$$
The construction above can be illustrated by the following diagram:
\begin{displaymath}
\xymatrix{
A_1 \ar[r]^{\phi_{1, i_1'}} & A_{i'_1} \ar[r] \ar[dr]^{\rho_1'} & A_{i_1''} \ar[r] & \cdots \ar[r] & A \\
B_1 \ar[r]  & B_{i'_1} \ar[r] &  B_{i_1''} \ar[r] & \cdots \ar[r] & B.
}
\end{displaymath}
%Note that the map $\rho_1$ also has the form

Let us now construct the map $\eta$ from $B$ back to $A$. By the construction of $B$, there is $\Delta_2>0$ such that for each $i \geq i_1''$ and if one writes $$\psi_{i_1'', i} = P \oplus \Theta: B_{i_1''} \to B_i,$$ where $P: B_{i_1''} \to B_i$ consists of coordinate projections and $\Theta: B_{i_1''} \to B_i$ consists of point evaluations, then $$\mathrm{tr}(\Theta(1_{B_{i_1''}})) > \Delta_2,\quad \tau \in \tr(B_i).$$ 
Then choose $\delta_2'$ sufficiently small that $$\frac{2\delta_2'  + (1-(1- 2 \delta_2')^3)}{\Delta_2} < \delta_2. $$ One also should ensure that $\delta_2' < \delta_1'$.

%Set $$\delta_2' = ? \quad \mathrm{and} \quad \delta''_2 = \min\{\delta_1', \delta_2'\}. $$
By the same argument as for $\rho_1$, there is a sufficiently large $i_2' > i_1''$ that
\begin{equation}\label{close-ratio-back}
(1-\delta''_2)(\kappa^{B, A}_{i, j})^*(r_i^{(0)}{(B)}) < r_j^{(0)}{(A)},\quad i, j \geq i_2',
\end{equation}
and
\begin{equation}\label{gr-con-B-step-2}
\sum_{i=1}^{s^{(B)}_{i_2'}}([\psi_{i_2', k}]_{i, j} -  (D[\psi_{i'_2, k}])_{i, j})\tilde{u}^{(B)}_{i_1', i} < \delta_2' \sum_{i=1}^{s^{(B)}_{i_2'}} [\psi_{i_2', k}]_{i, j}\tilde{u}^{(B)}_{i_2', i} , \quad j=1, ..., s_{k}^{(B)},\ k> i_2',
\end{equation}
and then there is $i_2'' > i_2'$ sufficiently large that 
\begin{equation}\label{gr-con-A-step-2}
\sum_{i=1}^{s_{i_2''}^{A}}(D[\psi_{i''_2, k}])_{i, j})\tilde{u}^{(A)}_{i_2'', i} > (1-\delta_2') \sum_{i=1}^{s_{i_2''}^{A}}  [\psi_{i_2'', k}]_{i, j}\tilde{u}^{(A)}_{i_2'', i} , \quad j=1, ..., s_{k}^{(A)},\ k> i_2'',
\end{equation}
and there are positive integers
$$\delta_{i, j}^{B, A}, \quad i=1, ..., s_{i_2'}^{(B)},\ j=1, ..., s_{i_2''}^{(A)}, $$
satisfying
\begin{equation}\label{app-multi-B}
1-2\delta'_2<\frac{\delta_{i, j}^{B, A}}{m_{i, j}^{B, A}} < 1 - \delta'_2 ,\quad i=1, ..., s_{i_2'}^{(B)},\ j=1, ..., s_{i_2''}^{(A)},
\end{equation}
where $(m_{i, j}^{B, A})$, $i=1, ..., s_{i_2'}^{(B)}$, $j=1, ..., s_{i_2''}^{(A)}$, are the multiplicities of $\kappa_{i_2', i_2''}^{B, A}$,
and therefore
\begin{equation}\label{room-4-dim-back}
\sum_{i=1}^{s_{i_2'}^{(B)}} u^{(B)}_{i_1', i} \delta_{i, j}^{B, A} < u_{i_2'', j}^{(A)},\quad j=1, ..., s_{i_2''}^{(A)}.
\end{equation}
Thus, there is room to define a map $\eta'_1: B_{i_2'} \to A_{i_2''}$ with the multiplicities of coordinate projections equal to $(\delta_{i, j}^{B, A})$. Then  define $$\eta_1:= \eta'_1 \circ \psi_{i_1'', i_1'}. $$ The construction can be illustrated by the following diagram:  
\begin{displaymath}
\xymatrix{
A_1 \ar[r]^{\phi_{1, i_1'}} & A_{i'_1} \ar[r] \ar[dr]^{\rho_1'} & A_{i_1''} \ar[r] & A_{i_2'} \ar[r] & A_{i_2''} \ar[r] &  \cdots \ar[r] & A \\
B_1 \ar[r]  & B_{i'_1} \ar[r] &  B_{i_1''} \ar[r]_{\psi_{i_1'', i_2'}} & B_{i_2'} \ar[ur]^{\eta_1'} \ar[r] & B_{i_2''} \ar[r] & \cdots \ar[r] & B.
}
\end{displaymath}

Let us consider the composition $\eta_1 \circ \rho_1$, which is $(\eta_1' \circ \psi_{i_1'', i_2'} \circ \rho'_1) \circ \phi_{1, i_1'}$, and compare it with the map $\phi_{1, i_2''}$.

Note that the multiplicity matrix of coordinate projections of $\eta_1' \circ \psi_{i_1'', i_2'} \circ \rho'_1$ is the product $$(\delta_{i, j}^{B, A})(D[\psi_{i_1'', i'_2}])(\delta_{i, j}^{A, B}).$$ Then, using \eqref{app-multi-A}, \eqref{gr-con-B}, and  \eqref{app-multi-B} (note that $\delta_2'<\delta_1'$), one has 
\begin{eqnarray*}
(\delta_{i, j}^{B, A})(D[\psi_{i_1'', i'_2}])(\delta_{i, j}^{A, B})(\tilde{u}^{(A)}_{i_1', i})
& > & (1-2\delta_1')(m_{i, j}^{B, A})(D[\psi_{i_1'', i'_2}])(1-2\delta_2'')(m_{i, j}^{A, B})(\tilde{u}^{(A)}_{i_1', i}) \\
& > &(1-2\delta_1')^2 (m_{i, j}^{B, A})((D[\psi_{i_1'', i'_2}])(m_{i, j}^{A, B})) (\tilde{u}^{(A)}_{i_1', i}) \\
& = &(1-2\delta_1')^2 (m_{i, j}^{B, A}) (D[\psi_{i_1'', i'_2}]) (\tilde{u}^{(B)}_{i_1'', i}) \\
& > &(1-2\delta_1')^3 (m_{i, j}^{B, A})[\psi_{i_1'', i'_2}](\tilde{u}^{(B)}_{i_1'', i}) \\
& = & (1-2\delta_1')^3(\kappa_{i_2', i_2''}^{B, A} \circ [\psi_{i_1'', i_2'}] \circ \kappa^{A, B}_{i_1', i_1''}) (\tilde{u}^{(A)}_{i_1', i}) \\
& = & (1-2\delta_1')^3  [\phi_{i_1', i_2''}] (\tilde{u}^{(A)}_{i_1', i}),
%& > & (1-2\delta_1')^3  D[\phi_{i_1', i_2''}].
\end{eqnarray*}
and therefore
$$ (1-2\delta_1')^3 [\phi_{i_1', i_2''}](\tilde{u}^{(A)}_{i_1', i}) < D[\eta_1' \circ \psi_{i_1'', i_2'} \circ \rho'_1] (\tilde{u}^{(A)}_{i_1', i}) \leq [\phi_{i_1', i_2''}] (\tilde{u}^{(A)}_{i_1', i}). $$
That is, 
%\begin{equation}\label{intertwin-est-1}
%\abs{ [\phi_{i_1', i_2''}](\tilde{u}^{(A)}_{i_1', i}) - D[\eta_1' \circ \psi_{i_1'', i_2'} \circ \rho'_1](\tilde{u}^{(A)}_{i_1', i}) }< (1 - (1-2\delta'_1)^3) [\phi_{i'_1, i_2''}] (\tilde{u}^{(A)}_{i_1', i}).
%\end{equation}
\begin{equation}\label{intertwin-est-1}
\sum_{i=1}^{s_{i_1'}^{(A)}}( [\phi_{i_1', i_2''}]_{i, j} \tilde{u}^{(A)}_{i_1', i} - D[\eta_1' \circ \psi_{i_1'', i_2'} \circ \rho'_1]_{i, j} \tilde{u}^{(A)}_{i_1', i} )  < (1 - (1-2\delta'_1)^3)  \sum_{i=1}^{s_{i_1'}^{(A)}}  [\phi_{i'_1, i_2''}]_{i, j} (\tilde{u}^{(A)}_{i_1', i}), 
\end{equation}
for each $j=1, ..., s_{i_2''}^{(A)}$.

Also note that, by \eqref{gr-con-A}, 
%\begin{equation}
%\sum_{i=1}^{s^{(A)}_{i_1'}}([\phi_{i_1', i_2''}]_{i, j} -  (D[\phi_{i'_1, i_2''}])_{i, j}) < \delta_1' \sum_{i=1}^{s^{(A)}_{i_1'}} [\phi_{i_1', i_2''}]_{i, j}, \quad j=1, ..., s_{i_2''}^{(A)}. 
%\end{equation}
%or (which is more reasonable)
\begin{equation}\label{small-eva}
\sum_{i=1}^{s^{(A)}_{i_1'}}([\phi_{i_1', i_2''}]_{i, j} -  (D[\phi_{i'_1, i_2''}])_{i, j})\tilde{u}^{(A)}_{i_1', i} < \delta_1' \sum_{i=1}^{s^{(A)}_{i_1'}} [\phi_{i_1', i_2''}]_{i, j}\tilde{u}^{(A)}_{i_1', i} = \delta_1'\tilde{u}^{(A)}_{i_2'', j} , \quad j=1, ..., s_{i_2''}^{(A)}. 
\end{equation}

Therefore, for each $j=1, ..., s_{i_2''}^{(A)}$, using the equation 
$$\sum_{i=1}^{s^{(A)}_{i_1'}} [\phi_{i_1', i_2''}]_{i, j} \tilde{u}_{i_1', i}^{(A)} = u_{i_2'', j}^{(A)}$$
in the last step, one has
\begin{eqnarray}
 & &\sum_{i=1}^{s^{(A)}_{i_1'}} \abs{ (D[\eta_1' \circ \psi_{i_1'', i_2'} \circ \rho'_1])_{i, j}\tilde{u}_{i_1', i}^{(A)} - (D[\phi_{i'_1, i_2''}])_{i, j} \tilde{u}_{i_1', i}^{(A)} } \label{dim-almost-commut-0} \\
 & \leq & \sum_{i=1}^{s^{(A)}_{i_1'}}( \abs{ (D[\eta_1' \circ \psi_{i_1'', i_2'} \circ \rho'_1])_{i, j}\tilde{u}_{i_1', i}^{(A)} - [\phi_{i_1', i_2''}]_{i, j}\tilde{u}_{i_1', i}^{(A)} } +   \abs{ [\phi_{i_1', i_2''}]_{i, j}\tilde{u}_{i_1', i}^{(A)}  - (D[\phi_{i'_1, i_2''}])_{i, j}\tilde{u}_{i_1', i}^{(A)} } ) \nonumber \\
 & < &  \delta_1' (\sum_{i=1}^{s^{(A)}_{i_1'}} [\phi_{i_1', i_2''}]_{i, j} \tilde{u}_{i_1', i}^{(A)}) + (1-(1- 2 \delta_1')^3) (\sum_{i=1}^{s^{(A)}_{i_1'}} [\phi_{i_1', i_2''}]_{i, j} \tilde{u}_{i_1', i}^{(A)}) \nonumber \\
 & = & (\delta_1'  + (1-(1- 2 \delta_1')^3) ) \tilde{u}_{i_2'', j}^{(A)}. \nonumber
\end{eqnarray}
%
%(The desired estimation should be
%$$ 
%\sum_{i=1}^{s^{(A)}_{i_1'}} \abs{ (D[\eta_1' \circ \psi_{i_1'', i_2'} \circ \rho'_1])_{i, j} - (D[\phi_{i'_1, i_2''}])_{i, j} }\tilde{u}_{i_1', i}^{(A)} <  (\delta_1'  + (1-(1- 2 \delta_1')^3) ) \sum_{i=1}^{s^{(A)}_{i_1'}} [\phi_{i_1', i_2''}]_{i, j} \tilde{u}_{i_1', i}^{(A)},\quad \forall j.
%$$
%Note that
%$$\sum_{i=1}^{s^{(A)}_{i_1'}} [\phi_{i_1', i_2''}]_{i, j} \tilde{u}_{i_1', i}^{(A)} = u_{i_2'', j}^{(A)},\quad j=1, ..., s_{i_2''}^{(A)}.$$
%)
Then, introducing the matrix $(c_{i, j})$ with 
$$c_{i, j} := \min\{(D[\eta_1' \circ \psi_{i_1'', i_2'} \circ \rho'_1])_{i, j}, [\phi_{i_1', i_2''}]_{i, j}\}, i=1,..., s_{i_1'}^{(A)},\ j=1,..., s_{i_2''}^{(A)},$$
and defining $P: A_{i_1'} \to A_{i_2''}$ to be the diagonal map consisting of coordinate projections with multiplicities given by $(c_{i, j})$, one has the decompositions 
$$ \phi_{i_1', i_2''} = P \oplus R_0,$$
$$ \eta_1' \circ \psi_{i_1'', i_2'} \circ \rho'_1 = P \oplus R_1,  $$
where, by \eqref{dim-almost-commut-0} and \eqref{small-eva}, for each $j=1, ..., s_{i''_2}^{(A)}$, 
\begin{equation}\label{dim-almost-commut}
\mathrm{rank}_j((R_0(1_{A_{i_1'}})))= \mathrm{rank}_j((R_1(1_{A_{i_1'}}))) < ((\delta_1'  + (1-(1- 2 \delta_1')^3) ) + \delta_1')\tilde{u}^{(A)}_{i_2'', j}.
\end{equation}
Write 
$$\phi_{1, i_1'} = P_1 \oplus \Theta,$$ 
where $P_1: A_1 \to A_{i_1'}$ is a coordinate projection map and $\Theta: A_1 \to A_{i_1'}$ is a point evaluation map;
then
\begin{eqnarray*}
\phi_{1, i_2''} & = & \phi_{i_1', i_2''} \circ \phi_{1, i_1'} \\
& = & (P \oplus R_0) \circ (P_1 \oplus \Theta)\\
& = & (P \circ P_1) \oplus (R_0 \circ P_1) \oplus ((P \oplus R_0) \circ \Theta)
\end{eqnarray*}
and
\begin{eqnarray*}
\eta_1 \circ  \rho_1 & = & (\eta_1' \circ \psi_{i_1'', i_2'} \circ \rho'_1) \circ \phi_{1, i_1'} \\
& = & (P \oplus R_1) \circ (P_1 \oplus \Theta)\\
& = & (P \circ P_1) \oplus (R_1 \circ P_1) \oplus ((P \oplus R_1) \circ \Theta).
\end{eqnarray*}

Note that, since $\Theta$ is a point-evaluation map, 
\begin{equation}\label{same-pe}
(P\oplus R_0)\circ \Theta = (P\oplus R_1) \circ \Theta. 
\end{equation}

By \eqref{dim-almost-commut} and  \eqref{lower-bd}, for each $j=1, ..., s_{i_2''}^{(A)}$, one has
\begin{eqnarray*}
\frac{\mathrm{rank}_j(R_0 \circ P_1)(1_{A_1})}{\mathrm{rank}_j((P \oplus R_0) \circ \Theta)(1_{A_1})} 
 & \leq & \frac{\mathrm{rank}_j R_0(1_{A_{1_i'}})}{\mathrm{rank}_j(\phi_{i_1', i_2''}  \circ \Theta)(1_{A_1})} \\
& \leq & \frac{ ((\delta_1'  + (1-(1- 2 \delta_1')^3) ) + \delta_1')\tilde{u}^{(A)}_{i_2'', j} }{\mathrm{rank}_j(\phi_{i_1', i_2''} \circ \Theta)(1_{A_1})} \\
& = & \frac{ (\delta_1'  + (1-(1- 2 \delta_1')^3) ) + \delta_1' }{(\mathrm{tr}_j\circ \phi_{i_1', i_2''})(\Theta(1_{A_1}))} \\
& < & \frac{ (\delta_1'  + (1-(1- 2 \delta_1')^3) ) + \delta_1' }{\Delta_1} < \delta_1,
\end{eqnarray*}
and, by the same argument,
$$\frac{\mathrm{rank}_j(R_1 \circ P_1)(1_{A_1})}{\mathrm{rank}_j((P \oplus R_1) \circ \Theta)(1_{A_1})} < \delta_1,\quad j=1, ..., s_{i_2''}^{(A)}. $$
Then the maps $\rho_1$ and $\eta_1$ possess the properties of the proposition with $$P_{A, 1} = P \circ P_1,\quad  R'_{A, 1} = R_0\circ P_1,\quad R''_{A, 1} = R_0 \circ P_1, \quad \mathrm{and} \quad \Theta_{A, 1} = (P \oplus R_0) \circ \Theta.$$

Repeating this process, one has the maps $\rho_i, \eta_i$, $i=1, 2,...$, which have the desired property.
\end{proof}

Recall the following stable uniqueness theorem:
\begin{thm}[Theorem 7.5 of \cite{ELN-Vill}]\label{stable-uniq}
Let $X$ be a K-contractible metrizable compact space (i.e., $\Kzero(\mathrm{C}(X)) = \Int$ and $\Kone(\mathrm{C}(X)) = \{0\}$), and let $\Delta: \mathrm{C}(X)^+ \to (0, +\infty)$ be a map. For any finite set $\mathcal F\subseteq \mathrm{C}(X)$ and any $\eps>0$, there exists a finite set $\mathcal H\subseteq \mathrm{C}(X)^+$ with $\mathrm{supp}(h) \neq X$ for each $h\in\mathcal H$ and there exists $M\in \mathbb N$ such that the following property holds:
for any unital homomorphisms $$\phi, \psi: \mathrm{C}(X) \to \mathrm{M}_{n}(\mathrm{C}(Y))\quad \textrm{and}\quad \theta: \mathrm{C}(X) \to \mathrm{M}_{m}(\Comp) \subseteq \mathrm{M}_{m}(\mathrm{C}(Y)),$$
where $\theta$ is a unital point-evaluation map with $nM < m,$ and such that
$$\mathrm{tr}(\theta(h)) > \Delta(h),\quad h\in\mathcal H,$$
there is a unitary $u\in\mathrm{M}_{n+m}(\mathrm{C}(Y))$ such that
$$\norm{\mathrm{diag}\{\phi(a), \theta(a)\} - u^*\mathrm{diag}\{\psi(a), \theta(a)\} u} < \eps,\quad a\in\mathcal F.$$ 
\end{thm}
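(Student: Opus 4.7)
The plan is to derive this as a Dadarlat--Eilers / Lin-style stable uniqueness result, using the K-contractibility of $X$ to kill all KK-theoretic obstructions so that only a distributional (mass/rank) condition is left---which is exactly what the hypothesis $\mathrm{tr}(\theta(h))>\Delta(h)$ on $\mathcal{H}$ is designed to enforce.

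The first step is to manufacture $\mathcal{H}$ and extract the relevant geometric data from $\mathcal{F}$ and $\eps$. I would cover $X$ by a finite collection of proper open sets $U_1,\ldots,U_\ell\subsetneq X$ on each of which every $a\in\mathcal{F}$ oscillates by at most $\eps/4$, fix distinguished points $x_i\in U_i$, pick a subordinate partition of unity $\{\chi_i\}$, and set $\mathcal{H}:=\{\chi_i\}$. Because $\mathrm{supp}(\chi_i)\subseteq\overline{U_i}\neq X$, the hypothesis $\mathrm{tr}(\theta(\chi_i))>\Delta(\chi_i)$ then gives a uniform lower bound on the proportion of the point-evaluation summands of $\theta$ landing in each $U_i$.

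Next I would pass to the finite-dimensional discretizations $\widetilde{\phi}(a):=\sum_i a(x_i)\phi(\chi_i)$ and $\widetilde{\psi}(a):=\sum_i a(x_i)\psi(\chi_i)$, which by continuous functional calculus agree with $\phi,\psi$ on $\mathcal{F}$ within $\eps/4$; similarly, after a small perturbation $\theta$ can be replaced by a point evaluation $\widetilde{\theta}$ concentrated at $\{x_1,\ldots,x_\ell\}$, with multiplicity at least $\Delta(\chi_i)m$ at $x_i$. The problem then reduces to intertwining the two families $\{\phi(\chi_i)\}$ and $\{\psi(\chi_i)\}$ of positive elements of $\mathrm{M}_n(\mathrm{C}(Y))$, which are partitions of $n\cdot 1_{\mathrm{C}(Y)}$ lifting the same partition of $1_X$. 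Since $X$ is K-contractible and $\phi,\psi$ are unital, $\phi(\chi_i)$ and $\psi(\chi_i)$ are close to projections of the same $\Kzero$-class in $\mathrm{M}_n(\mathrm{C}(Y))$.

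The decisive step is a Berg-lemma / Weyl--von Neumann-type absorption argument: choose $M$ large enough that, for each $i$, the diagonal block $\widetilde{\theta}(\chi_i)$---of rank at least $\Delta(\chi_i)m>Mn\cdot\Delta(\chi_i)$---has enough trivial excess rank to absorb both $\phi(\chi_i)$ and $\psi(\chi_i)$, each of total rank at most $n$. Using that any two stably equivalent projections in a trivial bundle of sufficiently large excess rank are unitarily equivalent through a continuous path, one then assembles a unitary $u\in\mathrm{M}_{n+m}(\mathrm{C}(Y))$ conjugating $\phi\oplus\theta$ to $\psi\oplus\theta$ on $\mathcal{F}$ within $\eps$. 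The main obstacle I anticipate is the continuity of $u$ in $y\in Y$: pointwise the unitary is immediate from K-contractibility of $X$, but patching the fiberwise unitaries into a single $u\in\mathrm{M}_{n+m}(\mathrm{C}(Y))$ requires enough excess rank from $\theta$ that the relevant vector bundles over $Y$ are provably homotopic, which is exactly what the quantitative bound $nM<m$ delivers. This interplay between the local mass bound $\Delta$ (spreading $\theta$ over $X$) and the rank excess $m/n>M$ (allowing bundle homotopies over $Y$) is the heart of the argument.
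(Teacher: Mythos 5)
First, a point of order: the paper does not prove this statement at all --- it is imported verbatim as Theorem 7.5 of \cite{ELN-Vill} and used as a black box --- so there is no internal proof to compare your attempt against. Judged on its own terms, your proposal frames the result correctly as a quantitative stable uniqueness theorem in which K-contractibility is meant to kill the $\mathrm{KK}$-obstruction and the hypothesis $\mathrm{tr}(\theta(h))>\Delta(h)$ forces the evaluation points of $\theta$ to be spread over $X$; your design of $\mathcal H$ as a partition of unity subordinate to a cover by proper open sets on which $\mathcal F$ has small oscillation, and the resulting discretization of $\phi$, $\psi$, $\theta$, are exactly the standard opening moves for such a theorem.

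The ``decisive step,'' however, has a genuine gap in two places. First, the elements $\phi(\chi_i)$ are not close to projections: a partition-of-unity function $\chi_i$ on a connected $X$ has an interval as its spectrum, so $\phi(\chi_i)$ is a positive contraction with full spectrum, and K-contractibility of $X$ does nothing to change this (for connected $X$ there are no nontrivial projections in $\mathrm{C}(X)$ to begin with). The reduction to ``intertwining two families of projections of the same $\Kzero$-class'' therefore does not exist, and with it goes the matching argument; what actually has to be matched is the fibrewise eigenvalue pattern of $\phi$ against that of $\psi$, continuously in $y\in Y$, which is the entire content of the theorem. Second, and more fundamentally, the absorption you invoke --- that stably equivalent projections inside a trivial bundle of ``sufficiently large excess rank'' are unitarily equivalent over $Y$ --- has a threshold growing with $\mathrm{dim}(Y)$ (roughly, excess rank exceeding $\frac{1}{2}\mathrm{dim}(Y)$ is needed for cancellation), whereas the theorem demands a single $M$ valid for every $Y$ with no dimension restriction; the inequality $nM<m$ bounds $m$ from below only relative to $n$, not relative to $\mathrm{dim}(Y)$. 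The failure of dimension-independent cancellation over high-dimensional base spaces is precisely the Villadsen phenomenon that this paper studies, so a proof resting on such cancellation cannot be correct. A viable proof must instead run through the Dadarlat--Eilers/Lin quantitative stable uniqueness machinery (absorbing representations and quasicentral approximate units, with the $\mathrm{KK}$-class identified via K-contractibility), or through a Berg-type exchange argument trading individual eigenvalues of $\phi$ against the constant, densely spread evaluation points of $\theta$ one at a time --- neither of which uses vector-bundle cancellation over $Y$. Your first paragraph points in the right direction; your third replaces the hard part with a tool that is unavailable in this setting.
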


\begin{thm}\label{classification-AF}
Let $X$ be a K-contractible solid space such that $ 0 < \mathrm{dim}(X) < \infty$, and let $A(X, G, \mathcal E)$ and $B(X, H, \mathcal F)$ be AF-Villadsen algebras with seed space $X$ satisfying \eqref{rapid-growth-cond} (and therefore \eqref{rapid-growth-cond-2}), where $G$ and $H$ are Bratteli diagrams and $\mathcal E$ and $\mathcal F$ are point evaluation sets. Then $A \cong B$ if, and only if, $(\mathrm{Cu}(A), [1_A]) \cong (\mathrm{Cu}(B), [1_B])$. Indeed, $A \cong B$ if, and only if, $$((\Kzero(A), \Kzero^+(A), [1_A]_0), r_\infty^{(0)}{(A)}) \cong ((\Kzero(B), \Kzero^+(B), [1_B]_0), r_\infty^{(0)}{(B)}).$$
\end{thm}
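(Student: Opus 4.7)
The plan is to establish a cycle of three implications: (a) $A \cong B$ trivially gives $(\mathrm{Cu}(A), [1_A]) \cong (\mathrm{Cu}(B), [1_B])$; (b) a Cuntz-semigroup isomorphism determines the triple $((\Kzero, \Kzero^+, [1]_0), r_\infty^{(0)})$, since $(\Kzero, \Kzero^+, [1]_0)$ sits in $\mathrm{Cu}$ as the projection classes, while by Theorem \ref{comparison-property} the function $r_\infty^{(0)}$ is intrinsically the minimum continuous affine function on $\mathrm{S}_u(\Kzero(A))$ satisfying the gap condition \eqref{gap-function}, a property formulated purely in terms of Cuntz subequivalence; (c) an isomorphism of the triples conversely yields an isomorphism of the algebras. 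All the substance is in (c).

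For (c), let $\kappa_\infty$ be an isomorphism of the $\Kzero$-based invariants carrying $r_\infty^{(0)}(A)$ to $r_\infty^{(0)}(B)$. Fix finite subsets $\mathcal F_i^{(A)} \subseteq A_i$ and $\mathcal F_i^{(B)} \subseteq B_i$ eventually dense in $A$ and $B$, tolerances $\varepsilon_i > 0$ with $\sum \varepsilon_i < \infty$, and a decreasing sequence $(\delta_i)$ with $\sum \delta_i < 1$. After suitable telescoping, apply Proposition \ref{pre-intertwining} to produce maps $\rho_i \colon A_i \to B_i$ and $\eta_i \colon B_i \to A_{i+1}$ in which $\phi_i$ and $\eta_i \circ \rho_i$ both decompose as
\[
P_{A,i} \oplus R_{A,i} \oplus \Theta_{A,i},
\]
sharing the coordinate-projection block $P_{A,i}$ and the point-evaluation block $\Theta_{A,i}$, and differing only in the coordinate-projection block $R_{A,i}$ (denoted $R'_{A,i}$ and $R''_{A,i}$ respectively) of equal rank, bounded by $\delta_i$ times the rank of $\Theta_{A,i}$ in each direct summand; the symmetric statement holds on the $B$-side.

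Next, promote this diagrammatic pre-intertwining to a genuine approximate intertwining of the inductive systems by invoking the stable uniqueness Theorem \ref{stable-uniq}. Applied to the unital homomorphisms $R'_{A,i}$ and $R''_{A,i}$ with $\theta$ taken as a sub-summand of $\Theta_{A,i}$, it yields a unitary $u_i$ such that $u_i \cdot (\eta_i \circ \rho_i)(a) \cdot u_i^* \approx_{\varepsilon_i} \phi_i(a)$ for $a \in \mathcal F_i^{(A)}$, and symmetrically a unitary $v_i$ for the pair $\rho_{i+1} \circ \eta_i$ and $\psi_i$ on $\mathcal F_i^{(B)}$. Elliott's approximate two-sided intertwining theorem then produces an isomorphism $A \cong B$ implementing $\kappa_\infty$ on $\Kzero$, which in particular induces the corresponding Cu-isomorphism, closing the cycle.

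The principal obstacle is meeting the hypotheses of Theorem \ref{stable-uniq} uniformly across the stages. The multiplicative condition $nM < m$ is secured by telescoping until $\delta_i < 1/M_i$, where $M_i$ depends only on $\mathcal F_i^{(A)}$ and $\varepsilon_i$. The tracial lower bound $\mathrm{tr}(\theta(h)) > \Delta(h)$ on the prescribed test set $\mathcal H$ is more delicate: it requires the point evaluations constituting $\Theta_{A,i}$ to be sufficiently well-distributed in $X^{u_{n,i}}$. This is arranged along the telescoping by the (standard) hypothesis that the point-evaluation sets $E^{(n)}_{i,j}$ become asymptotically dense in the relevant coordinate spaces --- the same mechanism underlying the UHF-Villadsen classification of \cite{ELN-Vill}, now adapted to the multi-summand AF setting where block-by-block bookkeeping is governed by the multiplicities $(\delta_{i,j}^{A,B})$ produced in Proposition \ref{pre-intertwining}.
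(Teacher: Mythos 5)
Your overall architecture matches the paper's proof: the Cu-isomorphism is reduced to the $\Kzero$-plus-$r_\infty^{(0)}$ invariant via Theorem \ref{comparison-property}, the invariant isomorphism is converted into a diagrammatic pre-intertwining via Proposition \ref{pre-intertwining}, the pre-intertwining is upgraded to an approximate intertwining by applying the stable uniqueness Theorem \ref{stable-uniq} to the pair $R'_{A,i}$, $R''_{A,i}$ with $\Theta_{A,i}$ as the absorbing point-evaluation summand, and Elliott's approximate intertwining closes the argument. The handling of the multiplicity condition $nM < m$ (telescope until $\delta_i < 1/M_i$) is also exactly the paper's.

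The genuine gap is in your verification of the tracial hypothesis $\mathrm{tr}(\theta(h)) > \Delta(h)$ of Theorem \ref{stable-uniq}. You propose to secure it by assuming the point-evaluation sets $E^{(n)}_{i,j}$ become ``asymptotically dense.'' This is not a hypothesis of the theorem being proved, and, more importantly, density alone does not yield the required quantitative bound: a dense set of evaluation points can still concentrate near the zero set of a fixed test function $h$, so $\frac{1}{N}\sum_l h(\theta_l)$ need not be bounded below by anything comparable to $\Delta^{(A)}(h) = \inf\{\tau(h) : \tau \in \tr(A)\}$. The paper's mechanism is different and is precisely why Theorem \ref{stable-uniq} insists that every $h \in \mathcal H$ satisfy $\mathrm{supp}(h) \neq X$: choose $x_0$ with $h(x_0) = 0$; since $P_{A,i}$ consists of coordinate projections, there is $y_0$ in the target space with $P_{A,i}(h)(y_0) = 0$; simplicity still forces $\mathrm{tr}(\phi_i(h)(y_0)) > \Delta^{(A)}(h)$, so at $y_0$ the entire trace must be carried by $R'_{A,i} \oplus \Theta_{A,i}$; and since $\mathrm{rank}_j(R'_{A,i}(1))/\mathrm{rank}_j(\Theta_{A,i}(1)) < \delta_i \leq \tfrac{1}{2}\Delta^{(A)}(h)$ by the choice of $\delta_i$ in Proposition \ref{pre-intertwining}, the normalized trace of $\Theta_{A,i}(h)$ within its own block exceeds $\Delta^{(A)}(h) - \delta_i > \tfrac{1}{2}\Delta^{(A)}(h)$, which matches the gauge $\tfrac{1}{2}\Delta^{(A)}$ fed into the stable uniqueness theorem. (This is also why $\delta_i$ must be taken below $\tfrac{1}{2}\Delta^{(A)}(h^{(A)})$ and $\tfrac{1}{2}\Delta^{(B)}(h^{(B)})$ for all $h$ in the test sets, not merely below $1/M_i$.) Without this step your application of Theorem \ref{stable-uniq} is not justified, and no extra distributional assumption on the evaluation points is needed once it is in place.
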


\begin{proof}
Assume that $$(\mathrm{Cu}(A), [1_A]) \cong (\mathrm{Cu}(B), [1_B]).$$ By Theorem \ref{comparison-property}, this implies that $$((\Kzero(A), \Kzero^+(A), [1_A]_0), r_\infty^{(0)}{(A)}) \cong ((\Kzero(B), \Kzero^+(B), [1_B]_0), r_\infty^{(0)}{(B)}).$$
%(Note that, by \eqref{rapid-growth-cond}, the functions $r_\infty^{(A)}$ and $r_\infty^{(B)}$ are non-zero everywhere. By \eqref{rapid-growth-cond-2}, the trace simplex of $A$ or $B$ is affinely homeomorphic to the trace simplex of the algebra before adding point evaluation. By Proposition \ref{iso-non-simple}, the algebra before adding point evaluations are isomorphic. Hence the trace simplexes of $A$ and $B$ are affinely homeomorphic.)

Let us prove the implication  
$$((\Kzero(A), \Kzero^+(A), [1_A]_0), r_\infty^{(0)}{(A)}) \cong ((\Kzero(B), \Kzero^+(B), [1_B]_0), r_\infty^{(0)}{(B)}) \quad \Longrightarrow \quad A \cong B.$$

Choose finite subsets $\mathcal F^{(A)}_i \subseteq A_i$, $\mathcal F^{(B)}_i \subseteq A_i$, $i=1, 2, ...$, such that $$\mathcal F^{(A)}_1 \subseteq \mathcal F^{(A)}_2 \subseteq \cdots \quad \mathrm{and} \quad  \mathcal F^{(B)}_1 \subseteq \mathcal F^{(B)}_2 \subseteq \cdots,$$
and 
$$ \overline{\bigcup_{i=1}^\infty \mathcal F^{(A)}_i} = A \quad \mathrm{and} \quad \overline{\bigcup_{i=1}^\infty \mathcal F^{(B)}_i} = B.$$
Choose $\eps_1>\eps_2> \cdots >0$ such that $$\sum_{i=1}^\infty \eps_i < \infty.$$

For each $A_i$, $i=1, 2, ...$, consider $$\Delta^{(A)}(a):= \inf\{\tau(a): \tau \in \tr(A)\},\quad a \in A_i^+, $$
and for each $B_i$, $i=1, 2, ...$,
$$\Delta^{(B)}(b):= \inf\{\tau(b): \tau \in \tr(B)\},\quad b \in B_i^+.$$

For each $(\mathcal F_i^{(A)}, \eps_i)$, applying  Theorem \ref{stable-uniq} with respect to $\frac{1}{2}\Delta^{(A)}$, one obtains a finite set of positive contractions $\mathcal H_i^{(A)} \subseteq A_i$  and $M_i^{(A)} \in \mathbb N$ with the property of Theorem \ref{stable-uniq}. 
Similarly, for each $(\mathcal F_i^{(B)}, \eps_i)$, applying  Theorem \ref{stable-uniq} with respect to $\frac{1}{2}\Delta^{(B)}$, one obtains a finite set of positive contractions $\mathcal H_i^{(B)} \subseteq B_i$ and $M_i^{(B)} \in \mathbb N$ with the property of Theorem \ref{stable-uniq}. 

By Proposition \ref{pre-intertwining}, upon a telescoping, there is a diagram
\begin{equation}
\xymatrix{
A_1 \ar[r]^{\phi_1} \ar[d]_{\rho_1} & A_2 \ar[r]^{\phi_2} \ar[d]_{\rho_2} & \cdots & \\
B_1 \ar[r]^{\psi_1} \ar[ur]^{\eta_1} & B_2 \ar[r]^{\psi_1} & \cdots & 
}
\end{equation}
such that each of $\phi_i, \psi_i, \rho_i, \kappa_i$, $i=1, 2, ...$, consists of independent coordinate projections and point evaluations, i.e., restricted to each direct summand of the domain, it has the form  
$$ (f_1, f_2, ..., f_s) \mapsto \mathrm{diag}\{ f_1 \circ P_1, ..., f_s \circ P_s, \textrm{point evaluations}\},$$
where $P_1, ..., P_s$ are mutually disjoint sets of coordinate projections,
and
for each $i=1, 2, ...$, there are decompositions
$$\phi_i = \mathrm{diag}\{P_{A, i}, R'_{A, i}, \Theta_{A, i}\}, \quad \psi_i = \mathrm{diag}\{P_{B, i}, R'_{B, i}, \Theta_{B, i}\},  $$
and
$$ \eta_i\circ\rho_i =  \mathrm{diag}\{P_{A, i}, R''_{A, i}, \Theta_{A, i}\},\quad  \rho_{i+1}\circ\eta_i =  \mathrm{diag}\{P_{B, i}, R''_{B, i}, \Theta_{B, i}\},$$
where $P_{A, i}: A_i \to A_{i+1}$ and $P_{B, i}: B_i \to B_{i+1}$ consist of coordinate projections, and $\Theta_{A, i}: A_i \to A_{i+1}$ and $\Theta_{B, i}: B_i \to B_{i+1}$ consist of point evaluations, 
such that, for each $i=1, 2, ...$, 
$$ \frac{\mathrm{rank}_j(R'_{A, i}(1_{A_i}))}{\mathrm{rank}_j(\Theta_{A, i}(1_{A_i}))} = \frac{\mathrm{rank}_j(R''_{A, i}(1_{A_i}))}{\mathrm{rank}_j(\Theta_{A, i}(1_{A_i}))} < \delta_i,\quad j=1, ..., s_{i+1}^{(A)},
$$
and
$$
\frac{\mathrm{rank}_j(R'_{B, i}(1_{B_i}))}{\mathrm{rank}_j(\Theta_{B, i}(1_{B_i}))} = \frac{\mathrm{rank}_j(R''_{B, i}(1_{B_i}))}{\mathrm{rank}_j(\Theta_{B, i}(1_{B_i}))} < \delta_i, \quad j=1, ..., s_{i+1}^{(B)},
 $$
 where 
 $$ \delta_i = \min\{\frac{1}{M_i^{(A)}},\ \frac{1}{M_i^{(B)}},\ \frac{1}{2}\Delta^{(A)}(h^{(A)}),\ \frac{1}{2}\Delta^{(B)}(h^{(B)}): h^{(A)} \in \mathcal H_i^{(A)},\ h^{(B)} \in \mathcal H_i^{(B)}  \}. $$

Let us compare the maps 
$$ \phi_1 =  \mathrm{diag}\{P_{A, 1}, R'_{A, 1}, \Theta_{A, 1}\} \quad \mathrm{and} \quad \eta_1 \circ\rho_1 = \mathrm{diag}\{P_{A, 1}, R''_{A, 1}, \Theta_{A, 1}\}.$$ Since none of the elements of $\mathcal H_1^{(A)}$ has full support, for each $h \in \mathcal H_1^{(A)}$, there is $x_0 \in X_1$, where $X_1$ is the base space of $A_1$, such that $h(x_0) = 0$. Since the map $P_{A, 1}$ consists of coordinate projections, there is $y_0 \in X_2$, where $X_2$ is the base space of $A_2$, such that $P_{A, 1}(h)(y_0) = 0$, and therefore
$$\mathrm{tr}(\phi_1(h)(y_0)) =  \mathrm{tr}(\mathrm{diag}\{P_{A, 1}(h)(y_0), R'_{A, 1}(h)(y_0), \Theta_{A, 1}(h)(y_0)\}) > \Delta_1^{(A)}(h),$$
where $\mathrm{tr}$ is the normalized trace of the matrix algebra over $y_0$.

Hence
$$\frac{\mathrm{Tr}(\phi_1(h)(y_0))}{\mathrm{rank}(1_{R_{A, 1}} + 1_{\Theta_{A, 1}})} = \frac{\mathrm{Tr}(R'_{A, 1}(h)(y_0)) + \mathrm{Tr}(\Theta_{A, 1}(h)(y_0)) }{\mathrm{rank}(R_{A, 1}(1_{A_1}) + \Theta_{A, 1}(1_{A_1}))} > \Delta_1^{(A)}(h),$$
where $\mathrm{Tr}$ is the unnormalized trace of the matrix algebra over $y_0$, and therefore (note that the image of $\Theta_{A, 1}$ consists of constant functions), 
\begin{eqnarray*}
\frac{\mathrm{Tr}(\Theta_{A, 1}(h))}{\mathrm{rank}( \Theta_{A, 1}(1_{A_1}) )} 
& > &  
\Delta_1^{(A)}(h) ( \frac{\mathrm{rank}(R_{A, 1}(1_{A_1}))}{\mathrm{rank}(\Theta_{A, 1}(1_{A_1}))} + 1 ) - \frac{\mathrm{Tr}(R'_{A, 1}(h)(y_0))}{\mathrm{rank}(\Theta_{A, 1}(1_{A_1}))} \\
& > & \Delta_1^{(A)}(h) ( \delta_1 + 1 ) -  \frac{\mathrm{rank}(R'_{A, 1}(1_{A_1}))}{\mathrm{rank}(\Theta_{A, 1}(1_{A_1}))} \cdot \frac{\mathrm{Tr}(R'_{A, 1}(h)(y_0))}{\mathrm{rank}(R'_{A, 1}(1_{A_1}))} \\
& > & \Delta_1^{(A)}(h) -  \delta_1 \\
& > & \frac{1}{2}\Delta_1^{(A)}(h).
\end{eqnarray*}

Since
$$ \frac{\mathrm{rank}_j(R'_{A, 1}(1_{A_1}))}{\mathrm{rank}_j(\Theta_{A, 1}(1_{A_1}))} = \frac{\mathrm{rank}_j(R''_{A, 1}(1_{A_1}))}{\mathrm{rank}_j(\Theta_{A, 1}(1_{A_1}))} < \delta_i < \frac{1}{M^{(A)}_1},\quad j=1, ..., s_{2}^{(A)},
$$
it follows from Theorem \ref{stable-uniq} that there is a unitary $u_1 \in A_2 $ such that
$$ \norm{\phi_1(f) - u^*_1(\eta_1\circ \rho_1)(f)u_1} <\eps_1,\quad f \in \mathcal F^{(A)}. $$ Replacing $\eta_1(\cdot)$ by $u_1^*\eta(\cdot)u_1$, and still denoting it by $\eta_1$, we have 
$$ \norm{\phi_1(f) - (\eta_1\circ \rho_1)(f)} <\eps_1,\quad f \in \mathcal F^{(A)}. $$

Repeating this process, we have a diagram 
\begin{equation}
\xymatrix{
A_1 \ar[r]^{\phi_1} \ar[d]_{\rho_1} & A_2 \ar[r]^{\phi_2} \ar[d]_{\rho_2} & \cdots & \\
B_1 \ar[r]^{\psi_1} \ar[ur]^{\eta_1} & B_2 \ar[r]^{\psi_1} & \cdots & 
}
\end{equation}
such that for each $i=1, 2, ...$,
$$\norm{\phi_i(f) - (\eta_i\circ\rho_i)(f)} < \eps_i,\quad f \in \mathcal F_i^{(A)}, $$
and
$$\norm{\psi_i(f) - (\rho_{i+1}\circ\eta_i)(f)} < \eps_i,\quad f \in \mathcal F_i^{(B)}.$$
By the approximate intertwining argument (Theorems 2.1 and 2.2 of \cite{Ell-Cre}), we have $A \cong B$, as desired.
\end{proof}

\begin{rem}
Note that, Theorem \ref{classification-AF} implies that the trace simplex is determined by the $\Kzero$-group. It would be interesting to see what it is. Is it independent of $X$, as the UHF-Villadsen algebra case (see Theorem 4.5 of \cite{ELN-Vill})?
\end{rem}

\section{A generalized version of the comparison radius function for UHF-Villadsen algebras}\label{general-gap-functions}

Note that the function $r^{(0)}_\infty$ of Section \ref{AF-construction} and Section  \ref{section-comparison-property} degenerates to the radius of comparison $\mathrm{rc}(A)$ if $A$ is a Villadsen algebra of UHF type. In this section, let us introduce a more general function, denoted by $r_\infty$, for Villadsen algebras of UHF type, which is not constant in general, but still has similar properties to the (numerical) radius of comparison (Theorem \ref{low-env-gn}).

%Recall the (continuous) gap functions:

\begin{defn}
Let $X$ be a compact Hausdorff space. For each $x \in X$, define
$$\mathrm{loc.dim}(x) = \min\{\mathrm{dim}(V): \textrm{$V$ is a closed neighbourhood of $x$} \}.$$

Note that the function $x \mapsto \mathrm{loc.dim}(x)$ is upper semicontinuous, and, if $X$ is a (finite) simplicial complex, then 
$$\mathrm{loc.dim}((x_1, ..., x_n)) = \mathrm{loc.dim}(x_1) + \cdots + \mathrm{loc.dim}(x_n),\quad (x_1, ..., x_n) \in X^n. $$
\end{defn}

Let $X$ be a (finite) simplical complex, and let $A(X, (n_s), (k_s))$ be a Villadsen algebra with seed space $X$ (see \cite{ELN-Vill} and \cite{Vill-perf}). Let us briefly recall its construction \cite{ELN-Vill}: $A(X, (n_s), (k_s))$ is the inductive limit of the sequence 
\begin{equation}\label{Vill-lim}
\xymatrix{
\mathrm{C}(X) \ar[r] & \mathrm{M}_{(n_1+k_1)}(\mathrm{C}(X^{n_1})) \ar[r] & \mathrm{M}_{(n_1+k_1)(n_2+k_2)}(\mathrm{C}(X^{n_1n_2})) \ar[r] & \cdots ,
}
\end{equation}
where the seed for the $s$th-stage map,
$$\phi_i: \mathrm{C}(X^{n_1 \cdots n_{s-1}}) \to \mathrm{M}_{n_s+k_s}(\mathrm{C}(X^{n_1 \cdots n_{s-1}n_{s}})),$$ is defined by
\begin{eqnarray*} 
f & \mapsto & \mathrm{diag}\{  f\circ \pi_1, ... ,  f\circ \pi_{n_s}, f(\theta_{s, 1}), ..., f(\theta_{s, k_s})\} 
\end{eqnarray*}
where $\theta_{s, 1}, ..., \theta_{s, k_s} \in X^{n_1\cdots n_{s-1}}$ are evaluation points. The evaluation points are chosen in such a way (dense enough) that the limit algebra is simple, and the growth sequences $(n_s)$ and $(k_s)$ are chosen so that 
\begin{equation}\label{rdg-cond}
 \lim_{s\to\infty} \lim_{t\to\infty} \frac{n_s \cdots n_{t}}{(n_s + k_s) \cdots (n_{t} + k_{t})} = \lim_{i\to\infty}\lim_{j\to\infty} (\frac{n_s}{n_s+k_s}) \cdots (\frac{n_{t}}{n_{t}+k_{t}}) = 1.
 \end{equation}
Note that, by Corollary 6.2 of \cite{ELN-Vill}, the simple limit algebra is independent of the evaluation points.

For each $s=1, 2, ...$, consider the function
$$r_s(x) =\frac{1}{2} \cdot \frac{\mathrm{loc.dim}(x)}{(n_1 + k_1) \cdots (n_{s-1} + k_{s-1})} , \quad x \in X^{n_1\cdots n_{s-1}}. $$
It is an upper semicontinuous function on $X^{n_1 \cdots n_{s-1}} = \partial \tr(A_s)$, and hence is an upper semicontinuous affine function on $\tr(A_s)$, and hence on $\tr(A)$.

On regarding $x \mapsto \mathrm{loc.dim}(x)$ as the upper left corner of $A_s$, there is a decreasing sequence of positive contractions $(f_{s, n}) \subseteq A_s$ such that
\begin{equation}\label{loc-sequence}
\lim_{n\to\infty}\tau(f_{s, n}) = r_s(\tau),\quad \tau \in \tr(A_s).
\end{equation}
(This will be used in the proof of Theorem \ref{low-env-gn}(1) below.)

\begin{lem}\label{uniform-dec-r}
The sequence $r_1, r_2, ...$, of upper semicontinuous positive real-valued affine functions on $\tr(A)$ is decreasing, and for any $s < t$,
$$ \norm{r_s - r_t}_\infty \leq \frac{1}{2} \mathrm{dim}(X) (\frac{n_1 \cdots n_{s-1}}{(n_1 + k_1) \cdots (n_{s-1} + k_{s-1})} - \frac{n_1 \cdots n_{t-1}}{(n_1 + k_1) \cdots (n_{t-1} + k_{t-1})}).$$
 %$$\norm{r_s - r_{s+1}}_\infty \leq  \frac{n_1\cdots n_{s-1} k_s}{ (n_1 + k_1) \cdots (n_s + k_s) } \mathrm{dim}(X). $$
\end{lem}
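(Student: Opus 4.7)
The plan is to reduce both claims to a single identity on extreme traces and then propagate by affineness and a telescoping arithmetic identity. Concretely, the extreme traces of $A_s$ are parametrized by points $x \in X^{n_1\cdots n_{s-1}}$ (point evaluation combined with the normalized matrix trace), and since $X$ is a finite simplicial complex the local dimension is additive on products:
$$\mathrm{loc.dim}((y_1, \ldots, y_m)) = \sum_{i=1}^m \mathrm{loc.dim}(y_i).$$
I would exploit this additivity together with the partition structure of the coordinate projections $\pi_1, \ldots, \pi_{n_s}$ appearing in $\phi_s$, whose ranges together partition the $n_1 \cdots n_s$ coordinates of $X^{n_1 \cdots n_s}$.

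First I would fix an extreme trace $\tau_z$ of $A_{s+1}$ corresponding to a point $z = (y_1, \ldots, y_{n_1 \cdots n_s}) \in X^{n_1\cdots n_s}$ and unfold its pullback through $\phi_s$. Directly from the definition of $\phi_s$,
$$\tau_z \circ \phi_s \;=\; \frac{1}{n_s+k_s}\Bigl(\sum_{i=1}^{n_s} \delta_{\pi_i(z)} + \sum_{j=1}^{k_s}\delta_{\theta_{s,j}}\Bigr),$$
so integrating $r_s$ against this measure and using the partition/additivity observation above gives $\sum_{i=1}^{n_s} \mathrm{loc.dim}(\pi_i(z)) = \sum_{j} \mathrm{loc.dim}(y_j)$. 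A direct computation then yields the key identity
$$r_s(\tau_z \circ \phi_s) \;-\; r_{s+1}(\tau_z) \;=\; \frac{1}{n_s+k_s}\sum_{j=1}^{k_s} r_s(\theta_{s,j}),$$
which is a non-negative constant independent of $z$. This already establishes the decreasing property on extreme traces; by affineness (integrating against the boundary representing measure, which is just a probability measure on $X^{n_1\cdots n_{s-1}}$ in this Bauer-simplex setting) the same constant difference extends to all of $\tr(A_{s+1})$, and then to $\tr(A)$ via pullback.

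For the quantitative bound I would use $\mathrm{loc.dim}(\theta_{s,j}) \le n_1 \cdots n_{s-1}\, \mathrm{dim}(X)$ to estimate
$$r_s(\theta_{s,j}) \;\le\; \frac{1}{2}\,\mathrm{dim}(X)\,\frac{n_1 \cdots n_{s-1}}{(n_1+k_1)\cdots(n_{s-1}+k_{s-1})},$$
so that
$$\norm{r_s - r_{s+1}}_\infty \;\le\; \frac{k_s}{n_s+k_s}\cdot\frac{1}{2}\,\mathrm{dim}(X)\,\frac{n_1\cdots n_{s-1}}{(n_1+k_1)\cdots(n_{s-1}+k_{s-1})}.$$
Finally I would apply the elementary arithmetic identity
$$\frac{k_s}{n_s+k_s}\cdot\frac{n_1\cdots n_{s-1}}{(n_1+k_1)\cdots(n_{s-1}+k_{s-1})} \;=\; \frac{n_1\cdots n_{s-1}}{(n_1+k_1)\cdots(n_{s-1}+k_{s-1})} \;-\; \frac{n_1\cdots n_s}{(n_1+k_1)\cdots(n_s+k_s)},$$
and sum (telescope) the one-step bounds from $i = s$ to $i = t-1$ to obtain the stated estimate on $\norm{r_s - r_t}_\infty$.

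I do not expect any serious obstacle: the entire proof reduces to the partition/additivity identity for $\mathrm{loc.dim}$ (an immediate consequence of the displayed formula given in the definition) and a telescoping sum. The only point that deserves care is verifying that the extension from extreme traces to $\tr(A)$ preserves both the inequality $r_{s+1} \le r_s$ and the constant difference; this is handled by noting that for a matrix algebra over $\mathrm{C}(Y)$ the trace simplex is the Bauer simplex of probability measures on $Y$, so upper semicontinuous affine extension is simply integration of the boundary data, and the restriction map $\tr(A_{s+1}) \to \tr(A_s)$ is continuous affine.
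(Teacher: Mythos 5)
Your proposal is correct and follows essentially the same route as the paper: both compute the pullback of $r_s$ under $\phi_s$ at an extreme trace, use additivity of $\mathrm{loc.dim}$ over the partitioned coordinates to identify the difference $r_s - r_{s+1}$ with the constant contribution of the evaluation points, bound that constant by $\mathrm{loc.dim}(\theta_{s,j}) \leq n_1\cdots n_{s-1}\,\mathrm{dim}(X)$, and pass from boundary points to the whole trace simplex by affineness. The only cosmetic difference is that you obtain the general $s<t$ estimate by telescoping the one-step bounds, whereas the paper runs the same counting argument directly across all intermediate stages; the two computations are identical in substance.
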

\begin{proof}
For each $x \in X^{(n_1\cdots n_{s-1})n_s}$, note that
\begin{eqnarray*}
 (\varphi_{s})_*(r_s)((x_1, ..., x_{n_s})) & = & \frac{1}{n_s + k_s}( r_s(x_1) + \cdots + r_s(x_{n_s}) + r_s(\theta_1) + \cdots + r_s(\theta_{k_s}) ) \\
 & = & \frac{1}{2} \cdot \frac{1}{(n_1 + k_1) \cdots (n_s + k_s)}( \mathrm{loc.dim}(x_1) + \cdots + \mathrm{loc.dim}(x_{n_s})) + \\
 &&  \frac{1}{2} \cdot  \frac{1}{(n_1 + k_1) \cdots (n_s + k_s)} (\mathrm{loc.dim}(\theta_1) + \cdots +  \mathrm{loc.dim}(\theta_{k_s})) \\
 & = & \frac{1}{2} \cdot \frac{1}{(n_1 + k_1) \cdots (n_s + k_s)} \mathrm{loc.dim}(x_1, ..., x_{n_s}) +  \\
 &&  \frac{1}{2} \cdot \frac{1}{(n_1 + k_1) \cdots (n_s + k_s)} (\mathrm{loc.dim}(\theta_1) + \cdots +  \mathrm{loc.dim}(\theta_{k_s})).
 \end{eqnarray*}
In particular, $r_s > r_{s+1}$ and
\begin{eqnarray*}
 \norm{ r_s - r_{s+1}}_\infty & = & \norm{  \frac{1}{2} \cdot \frac{1}{(n_1 + k_1) \cdots (n_s + k_s)} (\mathrm{loc.dim}(\theta_1) + \cdots +  \mathrm{loc.dim}(\theta_{k_s})) }_\infty \\ 
 & \leq & \frac{1}{2} \cdot \frac{k_s}{ (n_1 + k_1) \cdots (n_s + k_s) } (n_1\cdots n_{s-1}) \mathrm{dim}(X).
\end{eqnarray*} 
%as desired.

In general, the same argument shows that for any $s < t$, 
\begin{eqnarray*}
 \norm{r_s - r_t}_\infty & \leq & \frac{1}{2} \cdot \frac{(n_s + k_s)\cdots (n_{t-1} + k_{t-1}) - n_s\cdots n_{t-1}}{(n_1 + k_1) \cdots (n_{t-1} + k_{t-1})} (n_1\cdots n_{s-1}) \mathrm{dim}(X) \\
 & = &  \frac{1}{2} \cdot \frac{n_1\cdots n_{s-1} (n_s + k_s)\cdots (n_{t-1} + k_{t-1}) - n_1 \cdots n_{t-1}}{(n_1 + k_1) \cdots (n_{t-1} + k_{t-1})} \mathrm{dim}(X) \\
 & = &  \frac{1}{2} \cdot (\frac{n_1 \cdots n_{s-1}}{(n_1 + k_1) \cdots (n_{s-1} + k_{s-1})} - \frac{n_1 \cdots n_{t-1}}{(n_1 + k_1) \cdots (n_{t-1} + k_{t-1})})  \mathrm{dim}(X).
 \end{eqnarray*}
\end{proof}

Thus, by \eqref{rdg-cond}, the sequence $(r_s)$ converges uniformly. Denote its limit by $r_\infty$. % Denote the limit of $(r_s)$ by $r_\infty$.
By the construction, the function $r_\infty$ is the pointwise limit of a decreasing sequence of the upper semicontinuous functions $r_s$, $s=1, 2, ...$, and hence $r_\infty$ is also upper semicontinuous.

\begin{rem}\label{cont-r-func}
If $X$ has the property that $\mathrm{loc.dim}(\cdot)$ is constant, then $$r_\infty(\tau) = \mathrm{rc}(A)(\tau(1_A)), \quad \tau \in \mathrm{T}^+(A). $$
\end{rem}

\begin{defn}\label{defn-gap-fctn}
Let $A$ be a simple C*-algebra. Define the set of (continuous) gap functions,  $G_A$, to be the set of continuous positive real-valued affine functions $h: \mathrm{T}^+(A) \to [0, +\infty)$, $0$ at $0$,  such that for any $a, b \in (A\otimes \mathcal K)^+$,
$$\mathrm{d}_\tau(a) + h(\tau) < \mathrm{d}_\tau(b),\ \tau \in \mathrm{T}^+(A)\quad  \Rightarrow \quad  a \precsim b. $$
\end{defn}

\begin{thm}\label{low-env-gn}
Let $A$ be a UHF-Villadsen algebra with seed space a (finite) simplicial complex. Then the upper semicontinuous positive real-valued affine function $r_\infty$ on $\tr^+(A)$ has the property
\begin{equation}\label{character-r}
\{h \in \mathrm{Aff}(\tr^+(A)): r_\infty \leq h \} = G_A .
\end{equation}
\end{thm}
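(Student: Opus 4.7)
The plan is to prove both inclusions in $G_A = \{h \in \aff(\tr^+(A)): r_\infty \leq h\}$, parallelling Theorem~\ref{comparison-property} but with a pointwise, local-dimension refinement of Toms' comparison theorem in place of the global-dimension bound.

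\textbf{Inclusion $\supseteq$.} The first step is to show $r_\infty$ itself satisfies the comparison property: if $\mathrm{d}_\tau(a) + r_\infty(\tau) < \mathrm{d}_\tau(b)$ for all $\tau \in \tr^+(A)$, then $a \precsim b$. Any continuous affine $h \geq r_\infty$ is then a gap function for free. The argument follows the proof of Theorem~\ref{comparison-property}: perturb $a$ to $(a-\eps)_+$; the difference $\mathrm{d}_\tau(b) - \mathrm{d}_\tau((a-\eps)_+) - r_\infty(\tau)$ is lower semicontinuous (since $r_\infty$ is upper semicontinuous and $\mathrm{d}_\tau$ is lower semicontinuous) and strictly positive, so it is bounded below by some $\delta > 0$ on the compact set $\tr(A)$. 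By the uniform convergence $r_n \to r_\infty$ (Lemma~\ref{uniform-dec-r}), pick $n$ with $r_n \leq r_\infty + \delta/4$, approximate $a, b$ by $a_n, b_n \in A_n$ via Lemma~5.6 of \cite{Niu-MD}, and absorb a small trivial projection $q \in A_n$ as in Theorem~\ref{comparison-property}. This delivers the pointwise inequality
\begin{equation*}
\mathrm{rank}(a_n(x)) + \tfrac{1}{2}\mathrm{loc.dim}(x) < \mathrm{rank}(b_n(x)), \quad x \in X^{n_1 \cdots n_{n-1}},
\end{equation*}
and the local version of Theorem~4.6 of \cite{Toms-Comp-DS}, applied on the simplicial complex $X^{n_1 \cdots n_{n-1}}$ (whose local dimension at $(x_1, \ldots, x_{n_1 \cdots n_{n-1}})$ is $\sum_i \mathrm{loc.dim}(x_i)$), then yields $a_n \precsim b_n$. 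Letting $\eps \to 0$ gives $a \precsim b$.

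\textbf{Inclusion $\subseteq$.} Suppose $h \in \aff(\tr^+(A))$ with $h(\tau_0) < r_\infty(\tau_0)$, and set $\delta = r_\infty(\tau_0) - h(\tau_0) > 0$. Since the $r_s$ decrease uniformly to $r_\infty$ and are realized pointwise by $\mathrm{loc.dim}$, there are $s$ and $x_0 \in X^{n_1 \cdots n_{s-1}}$ with $r_s(x_0) - h(\tau_{x_0}) > \delta/2$, where $\tau_{x_0}$ is the trace induced by evaluation at $x_0$. By definition of $\mathrm{loc.dim}(x_0)$ there is a closed Euclidean ball $B \subseteq X^{n_1 \cdots n_{s-1}}$ of dimension $\mathrm{loc.dim}(x_0)$ through $x_0$. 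Mimicking the second half of the proof of Theorem~\ref{comparison-property}, build $p \in A_s \otimes \mathcal K$ from a projection corresponding to a complex vector bundle on $B$ with non-trivial top Chern class (extended constantly off $B$), calibrated so that $\mathrm{d}_\tau(p) > h(\tau) + \delta/8$ for all $\tau \in \tr^+(A)$, and pick a trivial projection $q$ of trace near $h + \delta/16$; then $\mathrm{d}_\tau(q) + h(\tau) < \mathrm{d}_\tau(p)$ pointwise. Restricting to the closed subset $\underbrace{B \times \cdots \times B}_{n_s \cdots n_{t-1}} \subseteq X^{n_1 \cdots n_{t-1}}$ arising from iterated coordinate projections of $\phi_{s, t}$, Remark~3.2 of \cite{ELN-Vill} bounds the rank of any trivial sub-bundle of $p$ strictly below the rank of $q$ on that subset; the rapid dimension growth \eqref{rdg-cond} ensures this obstruction survives as $t \to \infty$, so $q \not\precsim p$.

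The main obstacle is the reverse inclusion, specifically the pointwise calibration of the witness $p$. Unlike in Theorem~\ref{comparison-property}, where $r_\infty^{(0)}$ is realized by a finite-dimensional vector at each stage, here $r_\infty$ is only upper semicontinuous and typically non-constant (as for $[0,1] \vee [0,1]^2$). The witness $p$ must therefore be localized on a Euclidean ball of dimension $\mathrm{loc.dim}(x_0)$ near a single chosen point, rather than spread uniformly over $X^{n_1 \cdots n_{n-1}}$; the Chern-class obstruction is then local, while the trace inequality $\mathrm{d}_\tau(q) + h(\tau) < \mathrm{d}_\tau(p)$ must hold over all of $\tr^+(A)$. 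Threading these two constraints—local support of $p$ versus global trace comparison against the continuous affine $h$—is the delicate technical step.
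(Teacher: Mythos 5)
Your overall architecture (two inclusions, a Toms-type comparison argument for $\supseteq$, a Chern-class witness for $\subseteq$) matches the paper's, but both halves are missing the technical device that makes the \emph{non-constant} case work, and in each half that device is the actual content of the proof. For $\supseteq$: absorbing a small \emph{trivial} projection $q$ ``as in Theorem \ref{comparison-property}'' cannot deliver the pointwise inequality $\mathrm{rank}(a_n(x)) + \tfrac12\mathrm{loc.dim}(x) < \mathrm{rank}(b_n(x))$. In Theorem \ref{comparison-property} the function $r^{(0)}_k$ is constant on each irreducible summand, so a constant-rank projection dominates it pointwise; here $r_s(x)$ varies over the connected spectrum, and a trivial projection dominating it at every point would need normalized rank at least $\max_x r_s(x)$ --- which proves only the radius-of-comparison statement, not $\{h: r_\infty \le h\}\subseteq G_A$ for an $h$ that dips below that maximum. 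Moreover the hypothesis $\mathrm{d}_\tau(a)+h(\tau)<\mathrm{d}_\tau(b)$ lives on $\mathrm{T}(A)$, and the point masses $\mathrm{tr}_x$ on $A_n$ are not traces of $A$, so you cannot simply specialize it. The paper's fix is to use stable rank one and Theorem 8.11 of \cite{Thiel-sr1} to produce $c\in(A\otimes\mathcal K)^+$ with $\mathrm{d}_\tau(c)=h(\tau)$, absorb $a\oplus c\oplus q$ into $b$ (the trivial $q$ only eats the $\delta$ slack), push $c$ down to $\tilde c\in A_i$, and then show by a compactness/accumulation argument that the rank function of $(\tilde c-\eps')_+$ dominates $r_k(\tau)-\tfrac34\delta$ at \emph{every} trace of $A_s$, not just those coming from $\mathrm{T}(A)$. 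That element $c$, which carries the non-constant $h$ down to a pointwise rank bound on the spectrum, is absent from your argument. (Your appeal to a ``local version'' of Toms' theorem is fine in spirit; the paper obtains it by stratifying $X^{n_1\cdots n_{s-1}}$ into the closed level sets of $\mathrm{loc.dim}$ and applying Theorem 4.6 of \cite{Toms-Comp-DS} on each stratum.)

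For $\subseteq$ you have correctly identified the tension between the local Chern obstruction and the global trace domination of $h$, but the construction you sketch does not resolve it, and you essentially concede this. Two concrete problems: the obstructed bundle must live on the boundary sphere $\partial B$, since a Euclidean ball is contractible and every bundle on $B$ --- and on the product $B\times\cdots\times B$ to which you restrict --- is trivial, so there is no obstruction on your test set; and ``extended constantly off $B$'' gives $p$ constant normalized rank roughly $r_s(x_0)$, which will not dominate $h(\tau)+\delta/8$ at traces where $h$ is large. The paper's witness is $p\oplus p_0$ with $p_0(x)=\mathrm{dist}(x,\partial B_s)\,e$ and $\mathrm{rank}(e)\ge (M+2\delta)(n_1+k_1)\cdots(n_s+k_s)$: it has rank exceeding $h$ everywhere on the spectrum, yet degenerates to the obstructed projection exactly on $\partial B_s$, so the K\"unneth/Chern computation on $(\partial B_s)^{n_{s+1}\cdots n_t}$ still bounds the trivial subprojection by $\delta/8<\tau(q)$. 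Without these two ingredients the proof does not close.
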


\begin{rem}\label{uniq-crf}
Since $r_\infty$ is upper semicontinuous and affine (in particular concave), by Proposition I.1.2 of \cite{Alfsen-book},  one has
$$r_\infty = \inf\{h \in \mathrm{Aff}(\tr^+(A)): r_\infty \leq h\}.$$
Together with \eqref{character-r}, this implies
\begin{equation}\label{uniq-crf-eq}
r_\infty = \inf G_A.
\end{equation}
%and $G_A$ is characterized as the set of continuous positive real-valued affine functions which are larger than $r_\infty$. 
\end{rem}

\begin{proof}
To prove \eqref{character-r}, it is enough to show the following two properties:
\begin{enumerate}

\item If $h$ is a continuous positive real-valued affine function on $\mathrm{T}^+(A)$, $0$ at $0$, and $r_\infty \leq h$, then $h \in G_A$; that is, $h$ has the property that for any $a, b \in (A\otimes \mathcal K)^+$,
$$\mathrm{d}_\tau(a) + h(\tau) < \mathrm{d}_\tau(b),\ \tau \in \mathrm{T}^+(A) \quad  \Rightarrow \quad  a \precsim b. $$

\item If $h$ is a continuous positive real-valued affine function on $\mathrm{T}^+(A)$ and $h(\tau_0) < r_\infty(\tau_0)$ for some $\tau_0 \in \mathrm{T}^+(A)$, then $h \notin G_A$; that is, there are $a, b \in (A\otimes \mathcal K)^+$ such that $$\mathrm{d}_\tau(a) + h(\tau) < \mathrm{d}_\tau(b),\quad \tau \in  \mathrm{T}^+(A),$$ but $a$ is not Cuntz subequivalent to $b$.

%\item The function $r_\infty$ is upper semicontinuous, i.e., for any $\eps>0$ and any $\tau_0 \in \tr^+(A)$, there is an open neighbourhood $U$ of $\tau$ such that $$r_\infty(\tau) < r_\infty(\tau_0) + \eps,\quad \tau \in U.$$

\end{enumerate}

\noindent {\it Proof of (1).}
Let $h \in \mathrm{Aff}(\mathrm{T}^+(A))$ be continuous and $r_\infty \leq h$, and let $a, b \in (A\otimes \mathcal K)^+$ be such that
\begin{equation}\label{gap-given-cond-gn}
\mathrm{d}_\tau(a) + h(\tau) < \mathrm{d}_\tau(b),\quad \tau \in \mathrm{T}^+(A). 
\end{equation}

Let $\eps>0$ be arbitrary. There is $\delta > 0$ such that 
\begin{equation}\label{enlarged-gap-gn}
\mathrm{d}_\tau((a - \eps)_+) + h(\tau) + \delta < \mathrm{d}_\tau(b),\quad \tau \in \mathrm{T}^+(A). 
\end{equation}

Since $r_\infty \leq h$ and, by Lemma \ref{uniform-dec-r},  $(r_n)$ converges uniformly to $r_\infty$, there is $k$ such that
\begin{equation}\label{approx-r}
r_k(\tau) < h(\tau) + \frac{\delta}{8},\quad \tau \in \tr(A).
\end{equation} 
%With $s$ large enough, one has
%\begin{equation}\label{approx-r}
%r_k(\tau) < h(\tau) + \frac{\delta}{8},\quad \tau \in \tr(A_s).
%\end{equation}

Choose a non-zero trivial projection $q \in A_i$ for some $i \in \mathbb N$ such that
\begin{equation}\label{tr-q-gn}
\frac{3}{4}\delta < \tau(q) < \delta,\quad \tau \in A_i. 
\end{equation}

Since $A$ has stable rank one, by Theorem 8.11 of  \cite{Thiel-sr1} there is $c \in (A \otimes \mathcal K)^+$ such that $$\mathrm{d}_\tau(c) = h(\tau),\quad \tau \in \tr(A).$$ Therefore, by \eqref{gap-given-cond-gn} and \eqref{enlarged-gap-gn}, 
$$\mathrm{d}_\tau(a \oplus c \oplus q) < \mathrm{d}_\tau(b),\quad \tau \in \tr(A).$$
Note that, since $\tau \to \mathrm{d}_\tau(c) = h(\tau)$ is continuous, by Dini's theorem, there is $\delta'>0$ such that 
\begin{equation}\label{approx-h-1-gn}
h(\tau) -\frac{\delta}{4} = \mathrm{d}_\tau(c) - \frac{\delta}{4} < \tau(f_{\delta'}(c)) \leq \mathrm{d}_\tau(c) = h(\tau),\quad \tau \in\tr(A), 
\end{equation}
where $f_{\delta'}: \Real \to [0, 1]$ is the continuous function which is $0$ on $(-\infty, \delta']$, $1$ on $[2\delta', \infty)$, and linear in between. Fix $\delta'$.

Since $A$ is simple, by the proof of Proposition 3.2 of \cite{RorUHF-II}, there is $N \in \mathbb N$ such that
$$(a\oplus c \oplus q)\otimes 1_{N+1} \precsim  b \otimes 1_N.$$ 
By Lemma 5.6 of \cite{Niu-MD} (and its proof), for any $\eps'>0$ (to be determined later), there is $i \in \mathbb N$ such that there are positive elements $\tilde{a}$, $\tilde{c}$, and $\tilde{b}$ in $A_i$ (and $q \in A_i$) such that
$$ \norm{ a - \tilde{a} } < \eps',\quad \norm{c - \tilde{c}} < \eps', \quad \norm{b - \tilde{b}} < \eps', $$
$$ ((\tilde{a} - \eps')_+ \oplus (\tilde{c} - \eps')_+ \oplus q)\otimes 1_{N+1}  \precsim \tilde{b} \otimes 1_N, \quad \mathrm{and} \quad \tilde{b} \precsim b.$$

Hence
$$
\mathrm{d}_\tau(\tilde{a} - \eps')_+ + \mathrm{d}_\tau( \tilde{c} - \eps')_+ + \tau(q) < \mathrm{d}_\tau(\tilde{b}),\quad \tau \in \tr(A_i),
$$
and, by \eqref{tr-q-gn},
%In particular,
\begin{equation}\label{gap-finite-stage-gn}
\mathrm{d}_\tau(\tilde{a} - \eps')_+ + \mathrm{d}_\tau( \tilde{c} - \eps')_+ + \frac{3}{4}\delta < \mathrm{d}_\tau(\tilde{b}),\quad \tau \in \tr(A_i).
\end{equation}

Note that, with $\eps'$ sufficiently small, one has 
$$
\norm{ f_{\delta'}(c) - f_{\delta'}(\tilde{c})  } < \frac{\delta}{4}, %\quad \mathrm{and} \quad \norm{g_\eps(a) - g_{\eps}(\tilde{a})} < \frac{\delta}{8}
$$
and hence
\begin{equation}\label{approx-h-2-gn}
\abs{ \tau(f_{\delta'}(c)) - \tau(f_{\delta'}(\tilde{c}))  } < \frac{\delta}{4}, %\quad \mathrm{and} \quad \abs{ \tau(g_{\eps}(a)) - \tau(g_{\eps}(\tilde{a}))  } < \frac{\delta}{4}, 
\quad \tau \in \tr(A).
\end{equation}
Then, with $\eps' < \delta'$, by \eqref{approx-h-1-gn} and \eqref{approx-h-2-gn},
\begin{equation*}%\label{approx-h-3-gn}
\mathrm{d}_\tau(\tilde{c} - \eps')_+ \geq \tau(f_{\delta'}(\tilde{c})) >  \tau(f_{\delta'}(c)) - \frac{\delta}{4} > h(\tau) - \frac{\delta}{2}, \quad \tau \in \tr(A). 
\end{equation*}
By \eqref{approx-r} one has one more step, 
\begin{equation*}%\label{approx-h-3-gn}
\mathrm{d}_\tau(\tilde{c} - \eps')_+ %\geq \tau(f_{\delta'}(\tilde{c})) >  \tau(f_{\delta'}(c)) - \frac{\delta}{4} 
> h(\tau) - \frac{\delta}{2}> r_k(\tau) - \frac{3}{4}\delta, \quad \tau \in \tr(A). 
\end{equation*}
One should also assume that $\eps' < \eps$. Then, with $\eps'$ even smaller, % fix $\eps'$.
%Since $\tau \mapsto r_k(\tau)$ is upper semicontinuous, a compactness argument (Dini's theorem) shows that 
there is 
$\delta''>0$ such that
\begin{equation}\label{cut-ubd}
 \tau(f_{\delta''}((\tilde{c} - \eps')_+)) > r_k(\tau) - \frac{3}{4} \delta,\quad \tau \in \tr(A).  
 \end{equation}
Fix $\eps'$.

One asserts that there is $n$ such that 
\begin{equation}\label{cut-ubd-middle}
\tau(f_{\delta''}((\tilde{c} - \eps')_+)) > \tau(f_{k, n}) - \frac{3}{4} \delta  \geq r_k(\tau) - \frac{3}{4} \delta,\quad \tau \in \tr(A),
\end{equation}
where $(f_{k, n})_n$ is defined in \eqref{loc-sequence}, 
as, for any $\tau_0 \in \tr(A)$, by \eqref{cut-ubd}, there is $N \in \mathbb N$ such that
$$ \tau_0(f_{\delta''}((\tilde{c} - \eps')_+)) > \tau_0(f_{k, n}) - \frac{3}{4} \delta > r_k(\tau_0) - \frac{3}{4} \delta, \quad n >N.$$
Since $r_k$ is upper semicontinuous and $(f_{k, n})_n$ is decreasing, there is a neighbourhood $U \subseteq \tr(A)$ of $\tau_0$ such that
$$ \tau(f_{\delta''}((\tilde{c} - \eps')_+)) > \tau(f_{k, n}) - \frac{3}{4} \delta > r_k(\tau) - \frac{3}{4} \delta, \quad n >N,\quad \tau \in U.$$
Then, a compactness argument shows \eqref{cut-ubd-middle}.

Then, by \eqref{cut-ubd-middle}, there is $s > \max\{ i, k\}$ such that 
\begin{equation*}%\label{cut-ubd-middle-finite}
 \tau(f_{\delta''}((\tilde{c} - \eps')_+)) >\tau(f_{k, n}) - \frac{3}{4} \delta> r_k(\tau) - \frac{3}{4} \delta,\quad \tau \in \tr(A_s). 
 \end{equation*}
As, otherwise, there is a sequence $\tau_{s_1} \in \tr(A_{s_1}), \tau_{s_2} \in \tr(A_{s_2}), ...$ such that 
$$ \tau_{s_i}(f_{\delta''}((\tilde{c} - \eps')_+)) \leq \tau_{s_i}(f_{k, n}) - \frac{3}{4} \delta,\quad \tau \in \tr(A_s).  $$
Extend each $\tau_{s_i}$, $i=1, 2, ...$, to a state of $A$, and pick $\tau_\infty$ to be an accumulation point of $\{\tau_{s_i},\ i=1, 2, ... \}$. Then $\tau_\infty \in \tr(A)$ and it fails to satisfy \eqref{cut-ubd-middle}.

Therefore,
\begin{equation}\label{lbd-c-local-gn}
 \mathrm{d}_\tau((\tilde{c} - \eps')_+) \geq \tau(f_{\delta''}((\tilde{c} - \eps')_+)) > r_k(\tau) - \frac{3}{4} \delta,\quad \tau \in \tr(A_s).
 \end{equation}
%
%\vskip 1in 
%
%
%Since $\tau \mapsto h(\tau)$ is continuous, by \eqref{approx-h-3-gn} and Dini's Theorem, there is $\delta''>0$ such that
%$$ f_{\delta''}((\tilde{c} - \eps')_+) > h(\tau) - \frac{5}{8} \delta,\quad \tau \in \tr(A).  $$
%Thus, there is $s > i$ sufficiently large such that
%$$ f_{\delta''}((\tilde{c} - \eps')_+) > h(\tau) - \frac{5}{8} \delta,\quad \tau \in \tr(A_s),$$
%where $c_i$ and $h$ are regarded as elements of $A_s$, and therefore 
%\begin{equation}\label{lbd-c-local-gn}
% \mathrm{d}_\tau((\tilde{c} - \eps')_+) \geq f_{\delta''}((\tilde{c} - \eps')_+) > h(\tau) - \frac{5}{8} \delta,\quad \tau \in \tr(A_s).
% \end{equation}
% 
Then, by \eqref{lbd-c-local-gn} and \eqref{gap-finite-stage-gn}, for all $\tau \in \tr(A_s)$, one has  
\begin{eqnarray*}
\mathrm{d}_\tau((\tilde{a} - \eps)_+) + r_{s}(\tau)  \leq \mathrm{d}_\tau((\tilde{a} - \eps)_+) + r_k(\tau)   <   \mathrm{d}_\tau((\tilde{a} - \eps')_+) +  \mathrm{d}_\tau((\tilde{c} - \eps')_+) + \frac{3}{4} \delta 
 <  \mathrm{d}_\tau(\tilde{b}).
\end{eqnarray*}
%and, hence, 
%\begin{equation}
%\mathrm{d}_\tau((\tilde{a} - \eps)_+) + r_k(x) < \mathrm{d}_\tau((\tilde{a} - \eps)_+) + h(\tau) + \frac{\delta}{8} < \mathrm{d}_\tau(\tilde{b}),\quad \tau \in \tr(A_s).
%\end{equation}
%
In particular, 
\begin{equation}\label{RSD-gap-gn}
 \mathrm{rank}((\tilde{a} - \eps)_+(x)) + \frac{1}{2} \cdot \mathrm{loc.dim}(x) <  \mathrm{rank}(\tilde{b}(x)),\quad x \in X^{n_1 \cdots n_{s-1}}.
\end{equation}

Since $X$ is a (finite) simplicial complex, writing $$\mathrm{loc.dim}(X) = \{d_1, d_2, ..., d_l\},$$ where $d_1 > d_2 > \cdots > d_l$, and defining
$$X_{d_i} = \{x \in X^{n_1\cdots n_{s-1}}: \mathrm{loc.dim}(x) = d_i\},$$
one has $$\mathrm{dim}(\overline{X_{d_i}}) = d_i,\quad i=1, ..., l.$$

Note that there is a decomposition $$X^{n_1 \cdots n_{s-1}} = X_{d_1} \cup X_{d_2} \cup \cdots \cup X_{d_l},$$
and, since the function $\mathrm{loc.dim}(\cdot)$ is upper semicontinuous, 
the sets $$ Y_i:=X_{d_1} \cup \cdots \cup X_{d_i},\quad i=1, ..., l, $$
are closed.  This induces a recursive subhomogeneous decomposition (see \cite{Phill-RSA1}) $$A_s = \mathrm{M}_{(n_1 + k_1) \cdots (n_{s-1} + k_{s-1})}(\mathrm{C}(X^{n_1 \cdots n_{s-1}})) = (\cdots ((A_1 \oplus_{A_2^{(0)}} A_2) \oplus_{A_3^{(0)}} A_3) \oplus \cdots )\oplus_{A^{(0)}_{l}}  A_{l},$$
where $$A_i = \mathrm{M}_{(n_1+ k_1) \cdots (n_{s-1}+k_{s-1})}(\mathrm{C}(\overline{X_{d_i}} )) \quad \mathrm{and} \quad A_i^{(0)} = \mathrm{M}_{(n_1+ k_1) \cdots (n_{s-1} + k_{s-1})}(\mathrm{C}(Y_{i-1} \cap \overline{X_{d_i} })).$$

For each $X_i$, $i=1, ..., l$, one has 
$$\mathrm{dim}(\overline{X_i}) = d_i = \mathrm{loc.dim}(x),\quad x \in X_i.$$ Thus, by \eqref{RSD-gap-gn},
$$\mathrm{rank}((\tilde{a} - \eps)_+(x)) + \frac{1}{2} \cdot \mathrm{dim}(\overline{X_i}) <  \mathrm{rank}(\tilde{b}(x)),\quad x \in X_{i},\ i=1, ..., l.
$$
By Theorem 4.6 of \cite{Toms-Comp-DS},
$$(\tilde{a} - \eps)_+ \precsim \tilde{b} \precsim b.$$
Since $$a \approx_{\eps'} \tilde{a} \approx_\eps (\tilde{a} - \eps)_+,$$
one has 
$$(a - 2\eps)_+ \precsim b.$$
Since $\eps$ is arbitrary, this implies $a \precsim b$. This shows (1).
%\end{proof}

\noindent {\it Proof of (2).}
Let $h \in \mathrm{Aff}^+(\mathrm{T}^+(A))$ such that $$h(\tau_0) < r_\infty(\tau_0)$$ for some $\tau_0 \in \mathrm{T}^+(A)$. Let us show that $h \notin G_A$.

Set 
$$\delta = \max\{r_\infty(\tau) - h(\tau): \tau \in \mathrm{T}^+(A),\ \tau(1_A) = 1\} > 0$$
and 
$$ M = \max\{h(\tau): \tau \in \mathrm{T}^+(A),\ \tau(1_A) = 1 \}. $$

By \eqref{Vill-lim}, one has the following inductive limit decomposition of the ordered Banach space $\mathrm{Aff}(\mathrm{T}(A))$:
$$
\xymatrix{
\mathrm{C}_{\Real}(X) \ar[r]^-{\varphi_1^*} & \mathrm{C}_{\Real}(X^{n_1}) \ar[r]^{\varphi_2^*} &  \mathrm{C}_{\Real}(X^{n_1n_2}) \ar[r]^-{\varphi_3^*} & \cdots \ar[r] & \mathrm{Aff}(\mathrm{T}(A)).
}
$$
Then there is $h_s \in \mathrm{C}_{\Real^+}(X^{n_1 \cdots n_s})$ such that
\begin{equation}\label{approx-h-gn}
 \norm{ h _s - h}_\infty < \frac{\delta}{4}. 
 \end{equation}
Hence, with $s$ sufficiently large, 
there is $\tau_0 \in \tr(A_s) = \mathcal M_1(X^{n_1\cdots n_s})$ such that
$$ h_s(\tau_0) <  r_s(\tau_0) - \frac{3}{4}\delta,$$
and this implies that there is $x_0 \in X^{n_1 \cdots n_s}$ such that
$$h_s(x_0) < r_s(x_0) - \frac{3}{4} \delta = \frac{1}{2} \cdot \frac{\mathrm{loc.dim}(x_0)}{(n_1+k_1) \cdots (n_s + k_s)} - \frac{3}{4} \delta.$$

Since $X^{n_1\cdots n_s}$ is a (finite) simplical complex, there is a $\mathrm{loc.dim}(x_0)$-dimensional ball $B_s \subseteq X^{n_1 \cdots n_s}$ in any neighbourhood of $x_0$. Then, since $h_s$ is continuous, there is a Euclidean ball $B_s \subseteq X^{n_1 \cdots n_s}$ (in any neighbourhood of $x_0$) with dimension $d_{x_0}$, where
$$
d_{x_0} = \left\{ 
\begin{array}{ll}
\mathrm{loc.dim}(x_0), & \textrm{if $\mathrm{loc.dim}(x_0)$ is odd}, \\
\mathrm{loc.dim}(x_0) - 1, & \textrm{if $\mathrm{loc.dim}(x_0)$ is even},
\end{array}
\right.
$$
 %dimension  $\mathrm{loc.dim}(x_0)$ or  $(\mathrm{loc.dim}(x_0)-1)$ 
 such that 
\begin{equation}\label{local-small-1-gn}
h_s(x) < r_s(x) - \frac{3}{4} \delta =  r_s(x_0) - \frac{3}{4} \delta ,\quad x \in B_s.
\end{equation}
%(In the case that $\mathrm{loc.dim}(x_0)$ is even, choose a $(\mathrm{loc.dim}(x_0)-1)$-dimensional ball $B_s$.)

One should also assume $s$ is sufficiently large that
\begin{equation}\label{large-s-2-gn}
\abs{\frac{\mathrm{loc.dim}(x) - 1}{(n_1 + k_1) \cdots (n_s + k_s)} - r_s(x)} < \frac{\delta}{4}
\end{equation}
and
\begin{equation}\label{large-s-3-gn}
1 - (\frac{n_{s+1}}{n_{s+1} + k_{s+1}}) \cdots (\frac{n_t}{n_t + k_t}) <  \frac{\delta}{8},\quad t > s.
\end{equation}

Over $\partial B_s$, which is a $(d_{x_0} - 1)$-dimensional sphere, there is a complex vector bundle $E$ such that
$$\mathrm{rank}(E) = \frac{1}{2}(d_{x_0} - 1) \quad \mathrm{and} \quad c_{\frac{d_{x_0} - 1}{2}} \in H^{d_{x_0} - 1}(S^{d_{x_0} - 1}) \setminus \{0\}.$$
(Such a vector bundle exists, as, otherwise, the $\frac{d_{x_0} - 1}{2}$-th Chern class of every vector bundle would be trivial, and 
then the Chern character would not induce a rational isomorphism between the K-group and the
cohomology group of the sphere $S^{d_{x_0} - 1}$.)

Denote by $p$ the projection associated to $E_i$. Then, 
\begin{equation}\label{rank-p-ball-gn}
\mathrm{tr}_x(p) = \frac{1}{2} \cdot \frac{d_{x_0} - 1}{(n_1 + k_1) \cdots (n_s + k_s)}, \quad x \in \partial B_s,
\end{equation} 
where $\mathrm{tr}_x$ is the tracial state of $A_s$ which is concentrated at $x$.
Extend $p$ to a positive element of $A_s \otimes \mathcal K = \mathrm{C}(X^{n_1 \cdots n_s}) \otimes \mathcal K $ and still denote it by $p$.

Choose a positive matrix $e$ such that $$\mathrm{rank}(e) \geq (M+2\delta)(n_1 + k_1) \cdots (n_s + k_s),$$ and set $$p_0: X^{n_1\cdots n_s} \ni x \mapsto \mathrm{dist}(x, \partial B_s) e \in \mathcal K.$$ Consider the element $p + p_0$, and still denote it by $p$. Then, together with \eqref{rank-p-ball-gn}, \eqref{large-s-2-gn}, and \eqref{local-small-1-gn},   one has
\begin{equation}\label{lbd-p-gn}
\tau(p) > h_s(\tau) + \frac{\delta}{2},\quad \tau \in \tr(A_s), 
\end{equation}
and the restriction of $p$ to $\partial B_s$ is a projection such that the corresponding vector bundle has non-zero total Chern class at degree $\mathrm{loc.dim}(x_0) - 1$.

Let $q \in A_s$ be a trivial projection with $$\frac{\delta}{4} > \tau(q) > \frac{\delta}{8},\quad \tau \in \tr(A_s).$$ Then, by \eqref{approx-h-gn} and \eqref{lbd-p-gn},
$$ \tau(q) + h(\tau) < \frac{\delta}{4} + (h_s(\tau) + \frac{\delta}{4}) < \tau(p),\quad \tau \in \tr(A).$$
To show the theorem, it is enough to show that $q$ is not Cuntz sub-equivalent to $p$.

Let $t > s$ be arbitrary, and consider the building block $A_t$.  Consider the closed subset
$$ C_t:=\underbrace{\partial B_s \times \cdots \times \partial B_s}_{n_{s+1} \cdots n_{t}} \subseteq \underbrace{X^{n_1\cdots n_s} \times \cdots \times X^{n_1\cdots n_s}}_{n_{s+1} \cdots n_{t}}.$$
Then
$$\phi_{s, t}(p)|_{C_t} = \mathrm{diag}\{p|_{\partial B_s}\circ \pi_1, ..., p|_{\partial B_s} \circ \pi_{n_{s+1} \cdots n_t}, c\},$$
where $c$ is a constant positive matrix of rank at most $$(n_1 + k_1)\cdots(n_t + k_t) - (n_1 + k_1)\cdots(n_s + k_s)(n_{s+1} \cdots n_t).$$
Hence the positive element $ \phi_{s, t}(p)|_{C_t} $ is Cuntz equivalent to a projection of rank at most 
$$ R_t:=\frac{1}{2}(d_{x_0} - 1)(n_{s+1} \cdots n_t) + (n_1 + k_1)\cdots(n_t + k_t) - (n_1 + k_1)\cdots(n_s + k_s)(n_{s+1} \cdots n_t)$$
and with non-zero total Chern class (by the K{\"{u}}nneth Theorem) at 
$$H^{(d_{x_0} - 1)(n_{s+1} \cdots n_t)}(C_t).$$
Thus, by Remark 3.2 of \cite{ELN-Vill}, the trivial subprojection of $[ \phi_{s, t}(p)|_{C_t} ]$ has rank at most
$$ R_t - \frac{1}{2}(d_{x_0} - 1)(n_{s+1} \cdots n_t),$$
and hence has (normalized) trace at most
\begin{eqnarray*}
&&\frac{R_t - \frac{1}{2}(d_{x_0} - 1)(n_{s+1} \cdots n_t)}{(n_1 + k_1) \cdots (n_t + k_t)} \\
& = & \frac{ (n_1 + k_1)\cdots(n_t + k_t) - (n_1 + k_1)\cdots(n_s + k_s)(n_{s+1} \cdots n_t) }{(n_1 + k_1) \cdots (n_t + k_t)} \\
& = & 1 - \frac{n_{s+1}}{n_{s+1} + k_{s+1}} \cdots \frac{n_t}{n_t + k_t} \\
& < & \frac{\delta}{8}.
\end{eqnarray*}
Since $\tau(\phi_{s, t}(q)) > \delta/8$ for all $\tau \in \tr(A_t)$, this implies that $q$ is not Cuntz subequivalent to $p$. This shows (2).
\end{proof}

\begin{cor}\label{different-seed}
Let $X_1 = [0, 1]^2$ and $X_2 = [0, 1] \vee [0, 1]^2$, and let $A_1 = A(X_1, (n_i), (k_i))$ and $A_2 = (X_2, (n_i), (k_i))$ be UHF-Villadsen algebras with seed spaces $X_1$ and $X_2$ respectively. Then the function $r_\infty(A_1)$ is constant on $\tr(A_1)$, while the function $r_\infty(A_2)$ is not constant on $\tr(A_2)$.

In particular, $$\mathrm{Cu}(A_1) \ncong \mathrm{Cu}(A_2),$$ and $$\mathrm{rc}(A_1) = \mathrm{rc}(A_2) \quad \textrm{but} \quad A_1 \otimes \mathcal K  \ncong A_2 \otimes \mathcal K.$$ 
\end{cor}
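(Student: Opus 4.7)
The plan is to establish the two claims about $r_\infty$ separately, after which the remaining statements follow from Theorem \ref{low-env-gn} and \eqref{uniq-crf-eq}. For $A_1$, the assertion is immediate from Remark \ref{cont-r-func}: since $X_1 = [0,1]^2$ has $\mathrm{loc.dim}(x) = 2$ at every point, additivity gives $\mathrm{loc.dim} \equiv 2 n_1 \cdots n_{s-1}$ on $X_1^{n_1 \cdots n_{s-1}}$, so each $r_s$ is constant on $\tr(A_s)$ and $r_\infty(A_1) \equiv \mathrm{rc}(A_1)$.

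For $A_2$, note first that $\mathrm{loc.dim}$ on $X_2 = [0,1] \vee [0,1]^2$ takes the value $1$ on $(0,1] \subset [0,1]$ and the value $2$ elsewhere, so on $X_2^{n_1 \cdots n_{s-1}}$ it ranges over $\{n_1 \cdots n_{s-1}, \ldots, 2 n_1 \cdots n_{s-1}\}$ and $r_s$ on $\tr(A_s)$ attains the extreme values $\tfrac{1}{2} L_s$ and $L_s$, where $L_s := n_1 \cdots n_{s-1} / \prod_{j<s}(n_j+k_j) \downarrow L_\infty > 0$. The substantive step is to exhibit two traces $\tau^{\mathrm{hi}}, \tau^{\mathrm{lo}} \in \tr(A_2)$ with distinct $r_\infty$-values. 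For each $t$ I will fix points $y_t^{\mathrm{hi}} \in ([0,1]^2)^{n_1 \cdots n_{t-1}}$ and $y_t^{\mathrm{lo}} \in ((0,1])^{n_1 \cdots n_{t-1}}$ and analyze the back-propagated Dirac traces $\varphi_{s, t}^{\ast}(\delta_{y_t^*}) \in \tr(A_s)$. Writing $\varphi_{s,t}(f)$ as a diagonal sum of $\prod_{j=s}^{t-1}(n_j+k_j)$ entries, of which exactly $\prod_{j=s}^{t-1} n_j$ are coordinate-extractions $P\colon X_2^{n_1\cdots n_{t-1}} \to X_2^{n_1 \cdots n_{s-1}}$ while the remainder are constant evaluations at composite points $\theta \in X_2^{n_1 \cdots n_{s-1}}$, and using $\mathrm{loc.dim}(P(y_t^{\mathrm{hi}})) = 2 n_1 \cdots n_{s-1}$, $\mathrm{loc.dim}(P(y_t^{\mathrm{lo}})) = n_1 \cdots n_{s-1}$, and $n_1 \cdots n_{s-1} \leq \mathrm{loc.dim}(\theta) \leq 2 n_1 \cdots n_{s-1}$, a direct calculation yields
$$
r_s(\varphi_{s, t}^{\ast}(\delta_{y_t^{\mathrm{hi}}})) \in \bigl[\tfrac{1}{2}(L_s + L_t),\, L_s\bigr]
\quad \mathrm{and} \quad
r_s(\varphi_{s, t}^{\ast}(\delta_{y_t^{\mathrm{lo}}})) \in \bigl[\tfrac{1}{2} L_s,\, L_s - \tfrac{1}{2} L_t \bigr].
$$
A Tychonoff diagonal argument then extracts a subsequence $(t_n)$ so that for every $s$ the limit $\mu_s^* := \lim_n \varphi_{s, t_n}^{\ast}(\delta_{y_{t_n}^*})$ exists in $\tr(A_s)$; the identity $\varphi_{s, s+1}^{\ast} \circ \varphi_{s+1, t}^{\ast} = \varphi_{s, t}^{\ast}$ then makes $(\mu_s^*)_s$ a compatible tower, defining traces $\tau^{\mathrm{hi}}, \tau^{\mathrm{lo}} \in \tr(A_2)$ with $\tau^*|_{A_s} = \mu_s^*$. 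Passing to $t \to \infty$ in the displayed bounds and then to $s \to \infty$ using the uniform convergence $r_s \to r_\infty$ of Lemma \ref{uniform-dec-r} gives $r_\infty(\tau^{\mathrm{hi}}) = L_\infty$ and $r_\infty(\tau^{\mathrm{lo}}) = \tfrac{1}{2} L_\infty$, so $r_\infty(A_2)$ is non-constant.

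For the remaining statements, since $\mathrm{dim}(X_1) = \mathrm{dim}(X_2) = 2$ and the growth data $(n_i), (k_i)$ are identical, $\mathrm{rc}(A_1) = \mathrm{rc}(A_2) = L_\infty$; and by Theorem \ref{low-env-gn} together with \eqref{uniq-crf-eq}, $r_\infty$ is determined by $\mathrm{Cu}$ as $\inf G_A$, so no order-isomorphism of Cuntz semigroups can identify the constant $r_\infty(A_1)$ with the non-constant $r_\infty(A_2)$, giving $\mathrm{Cu}(A_1) \ncong \mathrm{Cu}(A_2)$ and therefore $A_1 \otimes \mathcal K \ncong A_2 \otimes \mathcal K$. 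The main obstacle will be the careful bookkeeping that underlies the two displayed range estimates (enumerating chains of coordinate projections versus evaluations in the iterated connecting map $\varphi_{s,t}$) and the diagonal extraction of a compatible tower; both are elementary once set up but require care.
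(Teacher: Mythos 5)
Your treatment of $A_1$ (via Remark \ref{cont-r-func}), and your derivation of the ``in particular'' statements from Theorem \ref{low-env-gn} and \eqref{uniq-crf-eq}, agree with the paper. For $A_2$ your strategy --- back-propagated Dirac traces at points lying entirely in the $2$-cell, resp.\ entirely in the open $1$-cell, followed by a diagonal extraction --- is close in spirit to the paper's, and your two displayed range estimates for $r_s\bigl(\varphi_{s,t}^{\ast}(\delta_{y_t^{\ast}})\bigr)$ are correct. The gap is in the step ``passing to $t\to\infty$ in the displayed bounds'': those bounds concern the approximating traces $\varphi_{s,t_n}^{\ast}(\delta_{y_{t_n}^{\ast}})$, and $r_s$ is only \emph{upper} semicontinuous on $\tr(A_s)$, so under weak-$*$ convergence $\mu_n\to\mu$ one only gets $r_s(\mu)\geq\limsup_n r_s(\mu_n)$. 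This transfers the \emph{lower} bound $\tfrac12(L_s+L_{t})$ to $\mu_s^{\mathrm{hi}}$ (which, with the trivial bound $r_s\leq L_s$, does give $r_\infty(\tau^{\mathrm{hi}})=L_\infty$), but it does \emph{not} transfer the \emph{upper} bound $L_s-\tfrac12 L_t$ to $\mu_s^{\mathrm{lo}}$: an upper semicontinuous function can jump up at the limit. Concretely, if you take $y_t^{\mathrm{lo}}$ with all coordinates tending to the wedge point $*$, the coordinate-projection part of $\mu_s^{\mathrm{lo}}$ concentrates at $(*,\dots,*)$, where $\mathrm{loc.dim}=2n_1\cdots n_{s-1}$, and one gets $r_\infty(\tau^{\mathrm{lo}})=L_\infty$ rather than $\tfrac12 L_\infty$; so the conclusion fails for some admissible choices of your points, and nothing in your argument rules them out.

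The repair is exactly the point the paper's proof is built around: fix the points once and for all, e.g.\ $y_t^{\mathrm{lo}}=(x,\dots,x)$ for a single $x$ in the open $1$-cell away from $*$ and $y_t^{\mathrm{hi}}=(y,\dots,y)$ for $y$ in the interior of the $2$-cell, so that the coordinate-projection part of every $\varphi_{s,t}^{\ast}(\delta_{y_t^{\ast}})$ is a fixed atom whose weight $L_t/L_s$ survives the weak-$*$ limit; then either control the limit measure directly, or, as the paper does, sandwich $r_{s_0}$ between continuous functions $g_{s_0}\leq r_{s_0}\leq f_{s_0}$ agreeing with $r_{s_0}$ at $(x,\dots,x)$ and $(y,\dots,y)$ (possible because $\mathrm{loc.dim}$ is locally constant at those points). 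Continuous test functions pass through the weak-$*$ limit in both directions and yield the two-sided estimate your argument is missing.
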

\begin{proof}
By Remark \ref{cont-r-func}, the affine function $r_\infty{(A_1)}$ is constant on $\tr(A_1)$, and so it factors through $\tr^+(A_1) \to \tr^+(\Kzero(A_1)) \cong \Real^+$. On the other hand, as we will show below, the function $r_\infty{(A_2)}$ is not constant on $\tr(A_2)$, and so does not factor through $\tr^+(A_2) \to \tr^+(\Kzero(A_2)) \cong \Real^+$. By Theorem \ref{low-env-gn}, the functions $r_\infty{(A_1)}$ and $r_\infty{(A_2)}$ are invariant (uniquely determined by the Cuntz semigroup). Hence, $\mathrm{Cu}(A_1) \ncong \mathrm{Cu}(A_2)$, as desired.

Let us show that the function $r_\infty{(A_2)}$ is not constant on $\tr(A_2)$. Let $x \in [0, 1] \vee [0, 1]^2 $ be a point of $[0, 1] \setminus\{*\}$, and let $y \in [0, 1] \vee [0, 1]^2 $ be a point of $[0, 1]^2\setminus\{*\}$, where $*$ is the common point of $[0, 1]$ and $[0, 1]^2$.  Note that
\begin{equation}\label{local-dim-comp}
\mathrm{loc.dim}((x, ..., x)) = n_1\cdots n_{s} \quad \mathrm{and} \quad \mathrm{loc.dim}((y, ..., y)) = 2 n_1\cdots n_{s} 
\end{equation} 
on $X_2^{n_1\cdots n_s}$. 

Put
$$\frac{1}{2}(\frac{n_1}{n_1 + k_1})  (\frac{n_2}{n_2 + k_2})   \cdots = \gamma,$$
and note that $\gamma \neq 0$.

On each $s=1, 2, ...$, consider the the tracial states $\tau_{x, s},  \tau_{y, s} \in \tr(\mathrm{M}_{(n_1+k_1)\cdots (n_s + k_s)}(\mathrm{C}(X_2^{n_1\cdots n_s})))$ which are induced by the Dirac measures which are concentrated on $$(x, ..., x) \quad \mathrm{and} \quad (y, ..., y),$$ respectively. Extend $\tau_{x, s},  \tau_{y, s}$ to states of $A_2$ and still denote them by $\tau_{x, s},  \tau_{y, s}$.

Note that the sequences $(\tau_{x, s})_s$ and $(\tau_{y, s})_s$ converge pointwise to $\tau_x$ and $\tau_y$, which are tracial states of $A_2$. Let us show that
$$r_\infty(\tau_x) = \gamma \quad \mathrm{and} \quad r_\infty(\tau_y) = 2\gamma.$$
In particular, $$r_\infty(\tau_x) \neq r_\infty(\tau_y).$$

Let $\eps>0$ be arbitrary.
By Lemma \ref{uniform-dec-r} and Equation \eqref{rdg-cond}, there is $s_0$ such that
\begin{equation}\label{far-eq-1}
 \norm{r_\infty - r_{s_0}}_\infty < \eps,
 \end{equation}
\begin{equation}\label{far-eq-2}
 1 - (\frac{n_{s_0}}{n_{s_0} + k_{s_0}}) \cdots (\frac{n_s}{n_s + k_s}) < \eps, \quad s \geq s_0,
 \end{equation}
 and
 \begin{equation} \label{far-eq-3}
 \gamma \approx_{\eps} \frac{1}{2} \cdot \frac{n_1 \cdots n_s}{(n_1 +k_1) \cdots (n_s + k_s)},\quad s \geq s_0.
 \end{equation}

By the choice of $x$ and $y$, there are positive real-valued continuous functions $f_{s_0}, g_{s_0} \in \mathrm{C}(X_2^{n_1 \cdots n_{s_0-1}}) $ such that 
\begin{equation}\label{sandwich}
 g_{s_0} \leq r_{s_0} \leq  f_{s_0},
 \end{equation}
$$ g_{s_0}((x, ..., x)) = r_{s_0}((x, ..., x)), \quad   g_{s_0}((y, ..., y)) = r_{s_0}((y, ..., y)),$$
and
$$ f_{s_0}((x, ..., x)) = r_{s_0}((x, ..., x)), \quad   f_{s_0}((y, ..., y)) = r_{s_0}((y, ..., y)).$$
By \eqref{local-dim-comp}, \eqref{far-eq-2}, and \eqref{far-eq-3}, for all $s > s_0$, one has
\begin{eqnarray*} 
&& \tau_{x, s}(\phi_{s_0, s}(f_{s_0})) \\
& = & \frac{n_{s_0} \cdots n_s}{(n_{s_0 } + k_{s_0 }) \cdots (n_{s} + k_{s})} f_{s_0}((x, ..., x)) + (1 - \frac{n_{s_0} \cdots n_s}{(n_{s_0 } + k_{s_0 }) \cdots (n_{s} + k_{s})})(f_{s_0}(\cdot) + \cdots f_{s_0}(\cdot)) \\
& \approx_{\eps} &  \frac{n_{s_0} \cdots n_s}{(n_{s_0 } + k_{s_0 }) \cdots (n_{s} + k_{s})} f_{s_0}((x, ..., x)) \\
& = &  \frac{n_{s_0} \cdots n_s}{(n_{s_0 } + k_{s_0 }) \cdots (n_{s} + k_{s})} r_{s_0}((x, ..., x)) \\
& = & \frac{1}{2} \cdot  \frac{\mathrm{loc.dim}((x, ..., x))}{(n_1 + k_1) \cdots (n_{s-1} + k_{s-1})} \frac{n_{s_0} \cdots n_s}{(n_{s_0 } + k_{s_0 }) \cdots (n_{s} + k_{s})} \\
& = & \frac{1}{2} \cdot  \frac{n_1 \cdots n_{s_0 - 1}}{(n_1 + k_1) \cdots (n_{s-1} + k_{s-1})} \frac{n_{s_0} \cdots n_s}{(n_{s_0 } + k_{s_0 }) \cdots (n_{s} + k_{s})} \\
& \approx_{\eps} & \gamma,
\end{eqnarray*}
and 
\begin{eqnarray*} 
&  & \tau_{x, s}(\phi_{s_0, s}(g_{s_0})) \\
%& = & \frac{n_{s_0} \cdots n_s}{(n_{s_0 } + k_{s_0 }) \cdots (n_{s} + k_{s})} g_{s_0}((y, ..., y)) + (1 - \frac{n_{s_0} \cdots n_s}{(n_{s_0 } + k_{s_0 }) \cdots (n_{s} + k_{s})}) \\
& \approx_{\eps} &  \frac{n_{s_0} \cdots n_s}{(n_{s_0 } + k_{s_0 }) \cdots (n_{s} + k_{s})} g_{s_0}((x, ..., x)) \\
& = &  \frac{n_{s_0} \cdots n_s}{(n_{s_0 } + k_{s_0 }) \cdots (n_{s} + k_{s})} r_{s_0}((x, ..., x)) \\
& = & \frac{1}{2} \cdot  \frac{\mathrm{loc.dim}((x, ..., x))}{(n_1 + k_1) \cdots (n_{s-1} + k_{s-1})} \frac{n_{s_0} \cdots n_s}{(n_{s_0 } + k_{s_0 }) \cdots (n_{s} + k_{s})} \\
& = & \frac{1}{2} \cdot  \frac{ n_1 \cdots n_{s_0 - 1}}{(n_1 + k_1) \cdots (n_{s-1} + k_{s-1})} \frac{n_{s_0} \cdots n_s}{(n_{s_0 } + k_{s_0 }) \cdots (n_{s} + k_{s})} \\
& \approx_{\eps} &  \gamma.
\end{eqnarray*}
Similarly, for all $s > s_0$, one has
$$ \abs{\tau_{y, s}(\phi_{s_0, s}(f_{s_0})) - 2\gamma} < 3\eps \quad \mathrm{and} \quad \abs{\tau_{y, s}(\phi_{s_0, s}(g_{s_0})) - 2\gamma} < 3 \eps. $$ 
Thus, by \eqref{sandwich} and \eqref{far-eq-1}, one has
$$r_\infty(\tau_x) = \lim_{s \to \infty} r_s(\tau_x) < r_{s_0}(\tau_x) < \widehat{f_{s_0}}(\tau_x) = \lim_{s \to\infty} \tau_{x, s}(\phi_{s_0, s}(f_{s_0})) < \gamma   + 2 \eps,$$
$$ r_\infty(\tau_x) > r_{s_0}(\tau_x) - \eps > \widehat{g_{s_0}}(\tau_x) - \eps = \lim_{s \to \infty} \tau_{x, s}(\phi_{s_0, s}(g_{s_0})) - \eps > \gamma - 3 \eps,$$
and hence
$$\gamma - 3 \eps < r_\infty(\tau_x) < \gamma + 2\eps. $$
The same argument applied to $y$ shows that 
$$2\gamma - 4 \eps < r_\infty(\tau_y) < 2\gamma + 3\eps. $$
Since $\eps$ is arbitrary, one has
$$r_\infty(\tau_x) = \gamma \quad\mathrm{and} \quad r_\infty(\tau_y) = 2\gamma,$$
as asserted.
\end{proof}

\begin{cor}\label{joint-classification}
Let $A = A(X^{(A)}, (n^{(A)}_i), (k^{(A)}_i))$ and $B = B(X^{(B)}, (n^{(B)}_i), (k^{(B)}_i))$  be UHF-Villadsen algebras with seed spaces $[0, 1]^2$ or $ [0, 1] \vee [0, 1]^2$. Then $A \cong B$ if, and only if, $(\mathrm{Cu}(A), [1_A]) \cong (\mathrm{Cu}(B), [1_B])$. Indeed, $A \cong B$ if, and only if, $$((\Kzero(A), \Kzero^+(A), [1_A]_0), r_\infty{(A)}) \cong ((\Kzero(B), \Kzero^+(B), [1_B]_0), r_\infty(B)).$$
\end{cor}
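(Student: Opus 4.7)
The forward direction is immediate, so I focus on the reverse. My first step is to observe that the two forms of the invariant carry the same information for these algebras: the ordered $\Kzero$-group with unit is the standard functorial quotient of $\mathrm{Cu}$, and by Theorem \ref{low-env-gn} together with Remark \ref{uniq-crf} the function $r_\infty$ is recovered from $\mathrm{Cu}(A)$ as the infimum of all continuous gap functions. The opposite direction of this equivalence will follow a posteriori from the classification itself. So the substantive claim to prove is: if $((\Kzero(A),\Kzero^+(A),[1_A]_0),r_\infty(A)) \cong ((\Kzero(B),\Kzero^+(B),[1_B]_0),r_\infty(B))$, then $A\cong B$.

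The strategy will be to use $r_\infty$ to recognize the seed space, and then apply the classification of UHF-Villadsen algebras with a fixed well-behaved seed space from \cite{ELN-Vill}. By Remark \ref{cont-r-func}, when the seed space is $[0,1]^2$ the local dimension is identically $2$, and $r_\infty$ is a constant function on $\tr^+(A)$ equal to $\mathrm{rc}(A)$. By Corollary \ref{different-seed}, when the seed space is $[0,1]\vee[0,1]^2$ the function $r_\infty$ takes two distinct values $\gamma$ and $2\gamma$ on $\tr(A)$, hence is non-constant. Thus ``$r_\infty$ is constant on $\tr(A)$'' is a Cuntz-semigroup invariant which distinguishes the two seed-space classes. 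Since the hypothesized isomorphism transports $r_\infty(A)$ to $r_\infty(B)$ in a way compatible with the identification of tracial data, the two algebras must have the same seed space $X^{(A)} = X^{(B)} =: X$.

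With the seed spaces identified, the isomorphism $A\cong B$ will follow from the single-seed-space classification in \cite{ELN-Vill}, whose invariant is the ordered $\Kzero$-group with unit together with the radius of comparison $\mathrm{rc}(\cdot) = \sup_\tau r_\infty(\cdot)(\tau)$ (the UHF analogue of Corollary \ref{rc-V}). Matching of the $r_\infty$'s in particular matches the $\mathrm{rc}$'s, so the hypotheses of that classification are met. For $X=[0,1]^2$ this is a direct application of Corollary 7.9 of \cite{ELN-Vill}. The main obstacle I anticipate is the wedge case $X=[0,1]\vee[0,1]^2$, which is not a Cartesian power of an interval and so is not literally covered by Corollary 7.9; however, it is K-contractible, finite-dimensional, and solid, and the essential ingredients of the intertwining argument remain available (the stable uniqueness Theorem \ref{stable-uniq}, which only requires K-contractibility, and the existence of complex vector bundles with non-trivial top Chern class, for which the Euclidean $2$-dimensional piece of the wedge suffices). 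I expect the existence-plus-uniqueness intertwining of the proof of Theorem \ref{classification-AF} above to adapt to this setting without essentially new ideas.
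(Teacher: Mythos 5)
Your proposal follows the paper's proof essentially verbatim: the constancy or non-constancy of $r_\infty$ on the trace simplex (via Remark \ref{cont-r-func} and Corollary \ref{different-seed}) identifies the seed space, $\mathrm{rc}(A)=\mathrm{rc}(B)$ follows from Theorem \ref{rc-V-general}, and the fixed-seed-space classification of \cite{ELN-Vill} finishes the argument. The only divergence is your anticipated obstacle in the wedge case: the paper invokes Theorem 7.1 of \cite{ELN-Vill} (the classification for a fixed well-behaved, i.e.\ K-contractible, finite-dimensional, connected, solid, seed space), which applies directly to $[0,1]\vee[0,1]^2$, so the adaptation of the intertwining argument you sketch is not actually needed.
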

\begin{proof}
Since $\mathrm{Cu}(A) \cong \mathrm{Cu}(B)$, one has $r_\infty(A) = r_\infty(B)$ under the isomorphism. Therefore the restrictions of these functions to the trace simplexes, respectively, are both constant or not, and hence, by Corollary \ref{different-seed}, $X^{(A)} \cong X^{(B)}$. Note that $\mathrm{rc}(A) = \mathrm{rc}(B)$ (see Theorem \ref{rc-V-general} below). The corollary then follows from Theorem 7.1 of \cite{ELN-Vill}.
\end{proof}

\begin{rem}
Note that, since $X_1$ and $X_2$ are contractible, by Corollary 6.1 of  \cite{West-cube},  $$X_1^\infty \cong X_2^\infty.$$ So, the Cuntz semigroup of a Villadsen algebra contains information which is finer than the infinite Cartesian power of the seed space.

On the other hand, the Villadsen algebras $A([0, 1], (n_i), (k_i))$ and $A([0, 1]^2, (n_i), (k_i))$ are stably isomorphic (but not isomorphic). Therefore, their Cuntz semigroups are isomorphic.  
\end{rem}

\begin{rem}
Let $X_1 = [0, 1] \vee [0, 1]^2$ and $X_2 = [0, 1] \vee [0, 1]^2 \vee [0, 1]$. It would be interesting to know whether the Villadsen algebras $A_1 = A(X_1, (n_i), (k_i)) $ and $A_2 = A(X_2, (n_i), (k_i))$ share the same Cuntz semigroup.
\end{rem}

%\begin{lem}\label{uniq-crf}
%Let $A$ be a unital C*-algebra. There is at most one upper semicontinuous positive real valued affine function on $\tr^+(A)$ such that 
%$$ G_A = \{h \in \mathrm{Aff}(\tr^+(A)):  r_\infty \leq h \}.$$
%%
%%which has Properties (1), (2), and (3) of Theorem \ref{low-env-gn}.
%\end{lem}
%\begin{proof}
%Let $\tilde{r}_\infty$ be another upper semicontinuous positive real valued affine function which satisfies the property above. %has Properties (1), (2), and  (3) of Theorem \ref{low-env-gn}. 
%Assume there were $\tau_0 \in \tr^+(A)$ such that $\tilde{r}_\infty(\tau_0) < r_\infty(\tau_0)$. Since $\tilde{r}_\infty$ is affine and upper semicontinuous, %there is a sequence $(h_n) \subseteq \mathrm{Aff}(\tr(A))$ which converges pointwisely to $\tilde{r}_\infty$ from above. Then, with $n$ sufficiently large, 
%one has $$\tilde{r}_\infty = \inf\{h \in \mathrm{Aff}(\tr^+(A)): h > \tilde{r}_\infty\},$$
%and hence there is $h \in \mathrm{Aff}(\tr(A))$ such that $$ \tilde{r}_\infty(\tau_0) \leq h(\tau_0) < r_\infty(\tau_0).$$ Since $\tilde{r}_\infty$ has Property (1), one has $h \in G_A$. But since $r_\infty$ has Property (2), $h \notin G_A$, which is a contradiction.
%
%The same argument also shows that there is no $\tau \in \tr(A)$ such that ${r}_\infty(\tau) < \tilde{r}_\infty(\tau))$. Therefore $$\tilde{r}_\infty(\tau) = r_\infty(\tau),\quad \tau \in \tr(A),$$
%as asserted.
%\end{proof}

\begin{cor}[cf.~Corollary \ref{aut-fix}]\label{aut-tail}
Let $A(X, (n_i), (k_i))$ be a UHF-Villadsen algebra with $X$ a finite simplicial complex, and let $\sigma \in \mathrm{Aut}(A)$. Then $$r_\infty((\sigma^*(\tau))) = r_\infty(\tau),\quad \tau \in \tr(A).$$
\end{cor}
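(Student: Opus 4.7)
The plan is to follow the strategy of Corollary~\ref{aut-fix}, but with the infimum characterization $r_\infty = \inf G_A$ supplied by Remark~\ref{uniq-crf} playing the role that the minimality of $r^{(0)}_\infty$ (among continuous affine functions with the comparison property) played there. The point is that the set $G_A$ is defined purely in terms of the $*$-algebraic data of $A$ (Cuntz subequivalence and the pairing with dimension functions), and so must be stable under the natural action of $\mathrm{Aut}(A)$ on $\mathrm{Aff}(\mathrm{T}^+(A))$ via precomposition with $\sigma^*$.

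First I would verify that, for any $\sigma \in \mathrm{Aut}(A)$, the map $h \mapsto h\circ\sigma^*$ is a bijection of $G_A$ onto itself. Continuity, positivity, affineness, and the vanishing at $0$ all pass through precomposition with $\sigma^*$, so only the comparison property is substantive. Suppose $h \in G_A$ and positive $a,b \in A\otimes\mathcal K$ satisfy
\[
\mathrm{d}_\tau(a) + (h\circ\sigma^*)(\tau) < \mathrm{d}_\tau(b),\quad \tau \in \mathrm{T}^+(A).
\]
Since $\sigma$ is a $*$-automorphism of $A\otimes\mathcal K$, one has $\mathrm{d}_{\tau'\circ\sigma^{-1}}(c) = \mathrm{d}_{\tau'}(\sigma^{-1}(c))$ for every $c \in (A\otimes\mathcal K)^+$. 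Substituting $\tau = \tau'\circ\sigma^{-1}$ (noting that $\tau$ ranges over all of $\mathrm{T}^+(A)$ as $\tau'$ does), the inequality transforms into
\[
\mathrm{d}_{\tau'}(\sigma^{-1}(a)) + h(\tau') < \mathrm{d}_{\tau'}(\sigma^{-1}(b)),\quad \tau' \in \mathrm{T}^+(A),
\]
so $h \in G_A$ gives $\sigma^{-1}(a) \precsim \sigma^{-1}(b)$, and applying $\sigma$ (which preserves Cuntz subequivalence) yields $a \precsim b$. Hence $h\circ\sigma^* \in G_A$, and running the same argument with $\sigma^{-1}$ in place of $\sigma$ shows the map is surjective.

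Second, combining this with \eqref{uniq-crf-eq}, for each $\tau \in \mathrm{T}^+(A)$,
\[
r_\infty(\sigma^*(\tau)) \;=\; \inf_{h \in G_A}\, h(\sigma^*(\tau)) \;=\; \inf_{h \in G_A}\,(h\circ\sigma^*)(\tau) \;=\; \inf_{g \in G_A}\, g(\tau) \;=\; r_\infty(\tau),
\]
the third equality using the bijection $h\leftrightarrow h\circ\sigma^*$ of $G_A$. Restricting to $\tr(A)\subseteq \mathrm{T}^+(A)$ yields the statement.

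I do not anticipate any serious obstacle. The entire argument is formal once one has the identity $r_\infty = \inf G_A$ from Remark~\ref{uniq-crf}; the only input used besides this is the two routine facts that $\sigma$ preserves Cuntz subequivalence and that $\mathrm{d}_{\tau\circ\sigma}(a) = \mathrm{d}_\tau(\sigma(a))$, both immediate from $\sigma$ being a $*$-automorphism of $A\otimes\mathcal K$.
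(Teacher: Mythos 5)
Your proposal is correct and is essentially the paper's own argument: the paper simply asserts that $\sigma_*(G_A)=G_A$ and then invokes $r_\infty=\inf G_A$ from \eqref{uniq-crf-eq}, exactly as you do. The only difference is that you spell out the (routine but worthwhile) verification that precomposition with $\sigma^*$ preserves the comparison property defining $G_A$.
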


\begin{proof}
Since $\sigma$ is an automorphism, one has $\sigma_*(G_A) = G_A$, and therefore (by \eqref{uniq-crf-eq}) $\sigma_*(r_\infty)$ is also the infimum of $G_A$. Hence 
$\sigma_*(r_\infty) = r_\infty$, as asserted.
\end{proof}

\begin{cor}[cf.~Example \ref{exm-HP}]\label{transitivity}
Let $A = A(X, (n_i), (k_i))$ be a UHF-Villadsen algebra with seed space $X = [0, 1] \vee [0, 1]^2$. Then the action of $\mathrm{Aut}(A)$ on the extreme points of $\tr(A)$, the Poulsen simplex (see \cite{ELN-Vill}), is not transitive.
\end{cor}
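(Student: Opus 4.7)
The plan is to argue by contradiction, combining the automorphism-invariance of $r_\infty$ from Corollary \ref{aut-tail} with the non-constancy of $r_\infty$ for the seed space $[0,1] \vee [0,1]^2$ established in Corollary \ref{different-seed}, and using that $\tr(A)$ is the Poulsen simplex. Suppose $\mathrm{Aut}(A)$ acts transitively on the extreme boundary $\partial_e \tr(A)$. Fix any extreme trace $\tau_0$ and set $c = r_\infty(\tau_0)$. By Corollary \ref{aut-tail}, transitivity forces $r_\infty \equiv c$ on all of $\partial_e \tr(A)$. The goal is to upgrade this to $r_\infty \equiv c$ on the whole simplex, which will contradict Corollary \ref{different-seed}.

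To establish $r_\infty \geq c$ throughout $\tr(A)$: since $\tr(A)$ is the Poulsen simplex, its extreme points are dense, so every $\tau \in \tr(A)$ is the limit of a sequence $(\tau_n)$ of finite convex combinations of extreme points. Affineness of $r_\infty$ gives $r_\infty(\tau_n) = c$ for each $n$, and upper semicontinuity of $r_\infty$ (inherited as the uniform monotone-decreasing limit in Lemma \ref{uniform-dec-r} of the upper semicontinuous functions $r_s$) yields $r_\infty(\tau) \geq \limsup_n r_\infty(\tau_n) = c$. For the reverse inequality $r_\infty \leq c$: the function $r_\infty$ is affine (hence convex) and upper semicontinuous on the compact convex set $\tr(A)$, so Bauer's maximum principle implies that $r_\infty$ attains its maximum at an extreme point, where its value is $c$; therefore $\max_{\tr(A)} r_\infty = c$.

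Combining both inequalities gives $r_\infty \equiv c$ on $\tr(A)$. This contradicts Corollary \ref{different-seed}, which explicitly exhibits two tracial states $\tau_x, \tau_y \in \tr(A)$ with $r_\infty(\tau_x) = \gamma$ and $r_\infty(\tau_y) = 2\gamma$, where $\gamma = \frac{1}{2}\prod_{i} n_i/(n_i + k_i) > 0$. Hence the transitivity assumption must fail. The only technical point requiring care is the invocation of Bauer's maximum principle in the upper semicontinuous setting, but this is classical: the maximum level set $\{r_\infty = \max r_\infty\}$ is non-empty (by compactness together with upper semicontinuity) and is a closed face of $\tr(A)$ (by affineness), and Krein--Milman supplies an extreme point of this face, which is automatically extreme in $\tr(A)$. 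Notably this argument bypasses the question of whether the specific traces $\tau_x, \tau_y$ constructed in Corollary \ref{different-seed} are themselves extreme.
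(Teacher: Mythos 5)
Your proof is correct and follows essentially the same route as the paper's: combine the $\mathrm{Aut}(A)$-invariance of $r_\infty$ (Corollary \ref{aut-tail}) with the non-constancy of $r_\infty$ on $\tr(A)$ for the seed space $[0,1]\vee[0,1]^2$ (Corollary \ref{different-seed}). The paper's proof asserts in a single line that non-constancy on $\tr(A)$ yields two \emph{extreme} traces where $r_\infty$ differs; your Bauer-maximum-principle/Krein--Milman argument (using upper semicontinuity and affineness of $r_\infty$) supplies precisely the justification for that step --- which is genuinely needed, since the traces $\tau_x,\tau_y$ produced in Corollary \ref{different-seed} are not shown to be extreme --- so your write-up is, if anything, more complete than the paper's.
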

\begin{proof}
The restriction of the function $r_\infty$ to $\tr(A)$ is not constant, and so there are $\tau_1, \tau_2 \in \partial \tr(A)$ such that $r_\infty(\tau_1) \neq r_\infty(\tau_2)$. By the corollary above, there is no $\sigma \in \mathrm{Aut}(A)$ such that $\sigma^*(\tau_1) = \tau_2$, as desired.
\end{proof}

\begin{defn}
Let $A$ be a C*-algebra. An upper semicontinuous extended positive real valued affine function $r_\infty$ on $\mathrm{T}^+(A)$ will be called the comparison radius function if it has the following property:
$$ \{h \in \mathrm{Aff}(\tr^+(A)):  r_\infty \leq h \} = G_A. $$

%\begin{enumerate}
%
%\item If $h \in \mathrm{Aff}(\mathrm{T}^+(A))$ is continuous and $r_\infty \leq h$, then $h \in G_A$. %that is, $h$ has the property that for any $a, b \in (A\otimes \mathcal K)^+$,
%%$$\mathrm{d}_\tau(a) + h(\tau) < \mathrm{d}_\tau(b),\ \tau \in \mathrm{T}^+(A) \quad  \Rightarrow \quad  a \precsim b. $$
%
%\item If $h \in \mathrm{Aff}(\mathrm{T}^+(A))$ is continuous and $h(\tau_0) < r_\infty(\tau_0)$ for some $\tau_0 \in \mathrm{T}^+(A)$, then, $h \notin G_A$. % that is, there are $a, b \in (A\otimes \mathcal K)^+$ such that $$\mathrm{d}_\tau(a) + h(\tau) < \mathrm{d}_\tau(b),\quad \tau \in  \mathrm{T}^+(A),$$ but $a$ is not subequivalent to $b$.
%
%\item The function $r_\infty$ is a decreasing pointwise limit of $\mathrm{Aff}(\mathrm{T}(A))$ ($r$ is upper semicontinuous).
%
%\end{enumerate}
%In other words, the function $r_\infty$ has Properties (1), (2), and (3) of Theorem \ref{low-env-gn}.

Note that, by Remark \ref{uniq-crf}, the comparison radius function, if it exists, satisfies $$r_\infty = \inf G_A,$$ and hence is unique.
\end{defn}

The radius of comparison can be recovered from the comparison radius function $r_\infty$ (cf.~Remark \ref{cont-r-func}). % (trivial for UHF Villadsen algebras).
\begin{thm}\label{rc-V-general}
Let $A$ be a C*-algebra such that $\tr(A) \neq \O$ and the comparison radius function $r_\infty$ exists (e.g., $A$ is a UHF-Villadsen algebra with seed space a (finite) simplicial complex). %let $r$ be an affine function which has Properties (1), (2), (3) of Theorem \ref{low-env-gn}. 
Then, for any non-zero projection $p \in A \otimes \mathcal K$, one has
$$\mathrm{rc}(p(A \otimes \mathcal K)p) = \sup\{r_\infty(\tau): \tau(p) = 1,\ \tau \in \mathrm{T}^+(A)\}.$$
\end{thm}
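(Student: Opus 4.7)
The plan is to follow the strategy of Theorem \ref{rc-general}, replacing its $\Kzero$-factorization hypothesis by the universal property $r_\infty = \inf G_A$ enjoyed by the comparison radius function (see Remark \ref{uniq-crf}). Set $M := \sup\{r_\infty(\tau): \tau \in \mathrm{T}^+(A),\ \tau(p) = 1\}$ and $C_p := \{\tau \in \mathrm{T}^+(A): \tau(p) = 1\}$. Assuming $p$ is full (which is automatic for the Villadsen examples by simplicity), $C_p$ is a base for $\mathrm{T}^+(A)$, canonically identifiable with $\tr(p(A\otimes\mathcal K)p)$, and Brown's stable isomorphism theorem lets positive elements of $A\otimes\mathcal K$ be read as positive elements of $M_\infty(p(A\otimes\mathcal K)p)$. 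The key observation used throughout is that for any constant $c$ the formula $\tilde{c}(\tau) := c\cdot\tau(p)$ defines the continuous affine extension, $0$ at $0$, to all of $\mathrm{T}^+(A)$ of the constant $c$ on $C_p$.

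For $\mathrm{rc}(p(A\otimes\mathcal K)p) \leq M$, I would take $a, b \in (A\otimes\mathcal K)^+$ with $\mathrm{d}_\tau(a) + M < \mathrm{d}_\tau(b)$ for every $\tau \in C_p$ and argue that $a \precsim b$. The function $\tilde{M}(\tau) = M\tau(p)$ dominates $r_\infty$ on $C_p$ by the choice of $M$, and, by positive homogeneity of both $\tilde{M}$ and $r_\infty$, dominates $r_\infty$ on all of $\mathrm{T}^+(A)$ (write $\tau = \tau(p)\sigma$ with $\sigma \in C_p$). Hence the defining characterization of the comparison radius function gives $\tilde{M} \in G_A$. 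Positive homogeneity of $\mathrm{d}_{(\cdot)}$ and $\tilde{M}$ then promotes the assumed gap from $C_p$ to $\mathrm{d}_\tau(a) + \tilde{M}(\tau) < \mathrm{d}_\tau(b)$ on all of $\mathrm{T}^+(A)\setminus\{0\}$, and $a \precsim b$ follows from the comparison property of $\tilde{M}$.

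For the reverse inequality, fix $s < M$, choose $\tau_0 \in C_p$ with $s < r_\infty(\tau_0)$, and consider $\tilde{s}(\tau) := s\tau(p) \in \mathrm{Aff}(\mathrm{T}^+(A))$. Since $\tilde{s}(\tau_0) < r_\infty(\tau_0)$, the equality $G_A = \{h \in \mathrm{Aff}(\mathrm{T}^+(A)): r_\infty \leq h\}$ forces $\tilde{s} \notin G_A$, and so yields $a, b \in (A\otimes\mathcal K)^+$ with $\mathrm{d}_\tau(a) + \tilde{s}(\tau) < \mathrm{d}_\tau(b)$ throughout $\mathrm{T}^+(A)$ but with $a$ not Cuntz-subequivalent to $b$. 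Restricting to $\tau \in C_p$ gives $\mathrm{d}_\tau(a) + s < \mathrm{d}_\tau(b)$ together with the failure of $a \precsim b$, so $\mathrm{rc}(p(A\otimes\mathcal K)p) \geq s$; letting $s \nearrow M$ finishes the proof. The main subtlety --- essentially the only non-bookkeeping step --- is the passage between a constant on the base $C_p$ and a continuous affine function on $\mathrm{T}^+(A)$ that is comparable to $r_\infty$ pointwise, which is made clean by the explicit formula $\tilde{c}(\tau) = c\,\tau(p)$ and the positive homogeneity of $r_\infty$.
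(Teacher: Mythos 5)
Your proposal is correct and follows essentially the same route as the paper: extend a constant on the section $C_p=\{\tau\in \mathrm{T}^+(A):\tau(p)=1\}$ to a continuous affine function on $\mathrm{T}^+(A)$ (the paper does this implicitly, you via the explicit formula $\tilde c(\tau)=c\,\tau(p)$), then use membership in $G_A$ for the upper bound and non-membership (from $\tilde s(\tau_0)<r_\infty(\tau_0)$) for the lower bound. The only cosmetic differences are that you work with $M$ directly rather than with $s>M$ arbitrary, and that you make explicit the fullness of $p$ needed for $C_p$ to be a base; neither changes the substance.
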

\begin{proof}
Let $s > \sup\{r_\infty(\tau): \tau(p) = 1,\ \tau \in \mathrm{T}^+(A)\}$ be a real number. Then, regarding $s$ as a constant (continuous) affine function on the section $\{\tau \in \mathrm{T}^+(A): \tau(p)=1\} = \mathrm{T}(p(A\otimes\mathcal K)p)$, and extending $s$ to $\mathrm{T}^+(p(A \otimes\mathcal K)p) = \mathrm{T}^+(A)$, one has $$r_\infty(\tau) < s(\tau), \quad \tau \in \mathrm{T}^+(A).$$ Therefore $s \in G_A$ (see (1) of Theorem \ref{low-env-gn}), and so $s$ has the property
$$\mathrm{d}_\tau(a) + s < \mathrm{d}_\tau(b),\ \tau \in \mathrm{T}(p(A\otimes\mathcal K)p) \quad \Longrightarrow \quad a \precsim b,\quad a, b \in (A \otimes \mathcal K)^+,$$ and hence $\mathrm{rc}(p(A\otimes \mathcal K)p) \leq s. $ This shows that $$\mathrm{rc}(p(A \otimes \mathcal K)p) \leq \sup\{r_\infty(\tau): \tau(p) = 1,\ \tau \in \mathrm{T}^+(A)\}.$$

Now, let $s \leq \sup\{r_\infty(\tau): \tau(p) = 1,\ \tau \in \mathrm{T}^+(A)\}$ be a real number. Then, for an arbitrary $\eps>0$, there is $\tau_0 \in \{\tau \in \mathrm{T}^+(A): \tau(p)=1\}$ such that $s - \eps < r(\tau_0)$. Regarding $s - \eps$ as a continuous affine function on $\mathrm{T}^+(A)$ as above (constant equal to the number $s - \eps$ on $\mathrm{T}(p(A \otimes \mathcal K)p)$), one has $s - \eps \notin G_A$ (by (2) of Theorem \ref{low-env-gn}); that is, there are $a, b \in (A \otimes \mathcal K)^+$ such that $$ \mathrm{d}_\tau(a) + (s - \eps) < \mathrm{d}_\tau(b),\ \tau \in \mathrm{T}(p(A\otimes\mathcal K)p),$$ but $a$ is not Cuntz subequivalent to $b$. Therefore, $$s - \eps \leq \mathrm{rc}(p(A\otimes \mathcal K)p).$$ Since $\eps$ is arbitrary, this implies $s \leq \mathrm{rc}(p(A\otimes \mathcal K)p)$. This shows that $$ \sup\{r_\infty(\tau): \tau(p) = 1,\ \tau \in \mathrm{T}^+(A)\} \leq \mathrm{rc}(p(A \otimes \mathcal K)p).$$
Together with the opposite inequality proved above, one has $$ \mathrm{rc}(p(A \otimes \mathcal K)p) = \sup\{r_\infty(\tau): \tau(p) = 1,\ \tau \in \mathrm{T}^+(A)\},$$ as asserted.
\end{proof}

\begin{rem}
Since $r_\infty$ is upper semicontinuous, one has 
$$\sup\{r_\infty(\tau): \tau(p) = 1,\ \tau \in \mathrm{T}^+(A)\} = \max\{r_\infty(\tau): \tau(p) = 1,\ \tau \in \mathrm{T}^+(A)\}, $$ where $p \in A \otimes \mathcal K$ is a projection.
\end{rem}

\begin{rem}
Does the comparison radius function $r_\infty$ exist for every simple C*-algebra? At least for simple C*-algebras of stable rank one? It also would be interesting to see how the argument of Theorem \ref{low-env-gn} can be adapted to the AF-Villadsen algebras.
%(It is enough to show that $G_A$ is a downward lattice, i.e., if $f, g \in G_A$, then $\min\{f, g\} \in G_A$.) %That is, for any simple C*-algebra $A$, is there an (upper semicontinuous) affine function $r_\infty$ which has Properties (1), (2), (3) of Theorem \ref{low-env-gn}.
\end{rem}

\bibliographystyle{plainurl}
\bibliography{operator_algebras}

@book{Alfsen-book,
	author = {Alfsen, E. M.},
	date-added = {2025-11-12 10:49:55 -0700},
	date-modified = {2025-11-12 10:52:22 -0700},
	mrclass = {46A99 (46E05)},
	mrnumber = {445271},
	mrreviewer = {H. E. Lacey},
	pages = {x+210},
	publisher = {Springer-Verlag, New York-Heidelberg},
	series = {Ergebnisse der Mathematik und ihrer Grenzgebiete, Band 57},
	title = {Compact convex sets and boundary integrals},
	url = {https://mathscinet.ams.org/mathscinet-getitem?mr=445271},
	year = {1971},
	bdsk-url-1 = {https://mathscinet.ams.org/mathscinet-getitem?mr=445271}}

@article{AGP-EDS,
	author = {Asadi-Vasfi, M. Ali and Golestani, N. and Phillips, N. C.},
	date-added = {2025-10-31 16:47:10 -0600},
	date-modified = {2025-10-31 16:52:07 -0600},
	doi = {10.1017/etds.2020.121},
	fjournal = {Ergodic Theory and Dynamical Systems},
	issn = {0143-3857},
	journal = {Ergodic Theory Dynam. Systems},
	mrclass = {46L55 (19K14 46L80)},
	mrnumber = {4336489},
	mrreviewer = {Xiao Chun Fang},
	number = {12},
	pages = {3541--3592},
	title = {The {C}untz semigroup and the radius of comparison of the crossed product by a finite group},
	url = {https://doi.org/10.1017/etds.2020.121},
	volume = {41},
	year = {2021},
	bdsk-url-1 = {https://mathscinet.ams.org/mathscinet-getitem?mr=4336489}}

@article{Asadi-Vasfi:2025aa,
	abstract = {For any given Cuntz semigroup, we introduce a function associated to it, called the ``Rank Ratio Function". This function ensures global control over the oscillation of the rank of any pair of elements in the Cuntz semigroup and, also, plays a very important role in computing the relative radii of comparison of the Cuntz semigroups. Along the way, as we study both the C*-algebraic and algebraic aspects of the relative radius of comparison with a deeper analysis, we introduce another function called the ``Radius of Comparison Function" which provides a method for manufacturing non-classifiable C*-algebras with different relative radii of comparison.},
	author = {Asadi-Vasfi, M. Ali},
	date-added = {2025-10-31 16:46:24 -0600},
	date-modified = {2025-10-31 16:49:30 -0600},
	eprint = {2505.05764},
	month = {05},
	title = {The Rank Ratio Function and the Radius of Comparison},
	url = {https://arxiv.org/pdf/2505.05764.pdf},
	year = {2025},
	bdsk-url-1 = {https://arxiv.org/pdf/2505.05764.pdf},
	bdsk-url-2 = {https://arxiv.org/abs/2505.05764}}

@article{APRT-sr1,
	author = {Antoine, R. and Perera, F. and Robert, L. and Thiel, H.},
	date-added = {2025-10-10 13:39:15 -0600},
	date-modified = {2025-10-10 13:40:19 -0600},
	doi = {10.1215/00127094-2021-0009},
	fjournal = {Duke Mathematical Journal},
	issn = {0012-7094},
	journal = {Duke Math. J.},
	mrclass = {46L05 (06F05 19K14 46L08 46L35)},
	mrnumber = {4364731},
	number = {1},
	pages = {33--99},
	title = {{C*}-algebras of stable rank one and their {C}untz semigroups},
	url = {https://doi.org/10.1215/00127094-2021-0009},
	volume = {171},
	year = {2022},
	bdsk-url-1 = {https://mathscinet.ams.org/mathscinet-getitem?mr=4364731}}

@article{HP-Villadsen,
	abstract = {We construct an uncountable family of pairwise nonisomorphic AH algebras with the same Elliott invariant and same radius of comparison. They can be distinguished by a local radius of comparison function, naturally defined on the positive cone of the K_0 group.},
	author = {I. Hirshberg and N. C. Phillips},
	date-added = {2025-08-11 15:19:38 +0800},
	date-modified = {2025-09-10 15:24:08 -0600},
	eprint = {2312.11203},
	month = {12},
	title = {Simple {AH} algebras with the same {Elliott} invariant and radius of comparison},
	url = {https://arxiv.org/pdf/2312.11203.pdf},
	year = {2023},
	bdsk-url-1 = {https://arxiv.org/pdf/2312.11203.pdf},
	bdsk-url-2 = {https://arxiv.org/abs/2312.11203}}

@article{ELN-Vill,
	author = {Elliott, G. A. and Li, C. G. and Niu, Z.},
	date-added = {2024-12-04 15:54:25 -0700},
	date-modified = {2024-12-04 15:55:25 -0700},
	doi = {10.1016/j.jfa.2024.110547},
	fjournal = {Journal of Functional Analysis},
	issn = {0022-1236},
	journal = {J. Funct. Anal.},
	mrclass = {46L35 (46L80)},
	mrnumber = {4758325},
	number = {7},
	pages = {Paper No. 110547, 55},
	title = {Remarks on {V}illadsen algebras},
	url = {https://doi.org/10.1016/j.jfa.2024.110547},
	volume = {287},
	year = {2024},
	bdsk-url-1 = {https://mathscinet.ams.org/mathscinet-getitem?mr=4758325}}

@article{West-cube,
	author = {J. E. West},
	date-added = {2023-09-20 23:41:42 -0600},
	date-modified = {2025-10-10 13:36:46 -0600},
	journal = {Trans. Amer. Math. Soc.},
	number = {1},
	pages = {1-25},
	title = {Infinite Products which are {Hilbert} Cubes},
	volume = {150},
	year = {1970}}

@article{Thiel-sr1,
	author = {Thiel, H.},
	date-added = {2020-06-01 13:46:19 -0600},
	date-modified = {2025-03-29 22:34:43 -0600},
	journal = {Comm. Math. Phys.},
	pages = {37--76},
	title = {Ranks of operators in simple {C*}-algebras with stable rank one},
	urldate = {DOI: 10.1007/s00220-019-03491-8},
	volume = {377},
	year = {2020}}

@article{EGLN-DR,
	author = {G. A. Elliott and G. Gong and H. Lin and Z. Niu},
	date-added = {2015-10-01 19:15:50 +0000},
	date-modified = {2025-03-29 22:33:13 -0600},
	eprint = {1507.03437},
	journal = {J. Noncommut. Geom.},
	number = {1},
	pages = {73-104},
	title = {On the classification of simple amenable {C*}-algebras with finite decomposition rank, {II}},
	url = {http://arxiv.org/abs/1507.03437},
	volume = {19},
	year = {2025},
	bdsk-url-1 = {http://arxiv.org/abs/1507.03437}}

@article{Niu-MD,
	author = {Niu, Z.},
	date-added = {2014-04-17 22:33:20 +0000},
	date-modified = {2014-04-17 22:33:53 +0000},
	doi = {10.1016/j.jfa.2014.02.010},
	fjournal = {Journal of Functional Analysis},
	issn = {0022-1236},
	journal = {J. Funct. Anal.},
	mrclass = {Preliminary Data},
	mrnumber = {3177327},
	number = {8},
	pages = {4938--4994},
	title = {Mean dimension and {AH}-algebras with diagonal maps},
	url = {http://dx.doi.org/10.1016/j.jfa.2014.02.010},
	volume = {266},
	year = {2014},
	bdsk-url-1 = {http://www.ams.org/mathscinet-getitem?mr=3177327},
	bdsk-url-2 = {http://dx.doi.org/10.1016/j.jfa.2014.02.010}}

@article{Vill-sr,
	author = {Villadsen, J.},
	date-added = {2013-10-20 20:14:28 +0000},
	date-modified = {2013-10-20 20:14:57 +0000},
	doi = {10.1090/S0894-0347-99-00314-8},
	fjournal = {Journal of the American Mathematical Society},
	issn = {0894-0347},
	journal = {J. Amer. Math. Soc.},
	mrclass = {46L05 (18B10 19K99 46L80 46M20)},
	mrnumber = {1691013 (2000f:46075)},
	mrreviewer = {Vicumpriya S. Perera},
	number = {4},
	pages = {1091--1102},
	title = {On the stable rank of simple {C*}-algebras},
	url = {http://dx.doi.org/10.1090/S0894-0347-99-00314-8},
	volume = {12},
	year = {1999},
	bdsk-url-1 = {http://www.ams.org/mathscinet-getitem?mr=1691013},
	bdsk-url-2 = {http://dx.doi.org/10.1090/S0894-0347-99-00314-8}}

@article{Vill-perf,
	author = {Villadsen, J.},
	coden = {JFUAAW},
	date-added = {2013-10-20 20:13:12 +0000},
	date-modified = {2013-10-20 20:13:41 +0000},
	doi = {10.1006/jfan.1997.3168},
	fjournal = {Journal of Functional Analysis},
	issn = {0022-1236},
	journal = {J. Funct. Anal.},
	mrclass = {46L05 (46L80)},
	mrnumber = {1616504 (99j:46069)},
	mrreviewer = {Mahmood Khoshkam},
	number = {1},
	pages = {110--116},
	title = {Simple {C*}-algebras with perforation},
	url = {http://dx.doi.org/10.1006/jfan.1997.3168},
	volume = {154},
	year = {1998},
	bdsk-url-1 = {http://www.ams.org/mathscinet-getitem?mr=1616504},
	bdsk-url-2 = {http://dx.doi.org/10.1006/jfan.1997.3168}}

@article{Phill-RSA1,
	author = {Phillips, N. C.},
	coden = {TAMTAM},
	date-added = {2013-10-20 20:05:09 +0000},
	date-modified = {2025-10-10 13:36:22 -0600},
	doi = {10.1090/S0002-9947-07-03850-0},
	fjournal = {Transactions of the American Mathematical Society},
	issn = {0002-9947},
	journal = {Trans. Amer. Math. Soc.},
	mrclass = {46L80 (19A13 19B14 19K14 46L05 46L55)},
	mrnumber = {2320643 (2009a:46133)},
	mrreviewer = {Valentin Deaconu},
	number = {10},
	pages = {4595--4623 (electronic)},
	title = {Recursive subhomogeneous algebras},
	url = {http://dx.doi.org/10.1090/S0002-9947-07-03850-0},
	volume = {359},
	year = {2007},
	bdsk-url-1 = {http://www.ams.org/mathscinet-getitem?mr=2320643},
	bdsk-url-2 = {http://dx.doi.org/10.1090/S0002-9947-07-03850-0}}

@article{Ell-Cre,
	author = {Elliott, G. A.},
	coden = {JRMAA8},
	date-added = {2013-10-20 19:40:16 +0000},
	date-modified = {2013-10-20 19:40:56 +0000},
	doi = {10.1515/crll.1993.443.179},
	fjournal = {Journal f{\"u}r die Reine und Angewandte Mathematik},
	issn = {0075-4102},
	journal = {J. Reine Angew. Math.},
	mrclass = {46L05 (19K14 46L80)},
	mrnumber = {1241132 (94i:46074)},
	mrreviewer = {Shuang Zhang},
	pages = {179--219},
	title = {On the classification of {C*}-algebras of real rank zero},
	url = {http://dx.doi.org/10.1515/crll.1993.443.179},
	volume = {443},
	year = {1993},
	bdsk-url-1 = {http://www.ams.org/mathscinet-getitem?mr=1241132},
	bdsk-url-2 = {http://dx.doi.org/10.1515/crll.1993.443.179}}

@article{Toms-Comp-DS,
	author = {Toms, A. S.},
	coden = {CMPHAY},
	date-added = {2011-09-25 16:17:01 -0230},
	date-modified = {2011-09-25 16:18:07 -0230},
	doi = {10.1007/s00220-008-0665-4},
	fjournal = {Communications in Mathematical Physics},
	issn = {0010-3616},
	journal = {Comm. Math. Phys.},
	mrclass = {46L35 (46L08 46L80)},
	mrnumber = {2506758 (2010d:46082)},
	mrreviewer = {Cristian Ivanescu},
	number = {2},
	pages = {401--433},
	title = {Comparison theory and smooth minimal {C*}-dynamics},
	url = {http://dx.doi.org/10.1007/s00220-008-0665-4},
	volume = {289},
	year = {2009},
	bdsk-url-1 = {http://www.ams.org/mathscinet-getitem?mr=2506758},
	bdsk-url-2 = {http://dx.doi.org/10.1007/s00220-008-0665-4}}

@article{RorUHF-II,
	author = {R{\o}rdam, M.},
	date-added = {2009-11-26 19:30:08 -0330},
	date-modified = {2010-01-15 16:20:30 -0330},
	journal = {J. Funct. Anal},
	number = {2},
	pages = {255--269},
	title = {On the structure of simple {C*}-algebras tensored with a {UHF}-algebra. {II}},
	volume = {107},
	year = {1992}}

\end{document}